\newtheorem{thm}{Theorem}
\newtheorem{definition}[thm]{Definition}
\newtheorem{lemma}[thm]{Lemma}
\newtheorem{proposition}[thm]{Proposition}
\newtheorem{corollary}[thm]{Corollary}
\theoremstyle{definition}
\newtheorem{remark}[thm]{Remark}
\newcommand{\commonpat}{2\underbracket[.5pt][1pt]{41}3}
\newcommand{\baxpat}{3\underbracket[.5pt][1pt]{14}2}
\newcommand{\twistedpat}{3\underbracket[.5pt][1pt]{41}2}
\newcommand{\planepat}{2\underbracket[.5pt][1pt]{14}3}
\title{Semi-Baxter and strong-Baxter: \\ two relatives of the Baxter sequence}
\author{Mathilde Bouvel, Veronica Guerrini, Andrew Rechnitzer, and Simone Rinaldi}
\date{\today}
\begin{document}

\maketitle

\begin{abstract}
In this paper, we enumerate two families of pattern-avoiding permutations: 
those avoiding the vincular pattern $2\underbracket[.5pt][1pt]{41}3$, which we call semi-Baxter permutations, 
and those avoiding the vincular patterns $2\underbracket[.5pt][1pt]{41}3$, $3\underbracket[.5pt][1pt]{14}2$ and $3\underbracket[.5pt][1pt]{41}2$, 
which we call strong-Baxter permutations. 
We call semi-Baxter numbers and strong-Baxter numbers the associated enumeration sequences. 
We prove that the semi-Baxter numbers enumerate in addition plane permutations (avoiding $2\underbracket[.5pt][1pt]{14}3$). The problem of counting these permutations was open 
and has given rise to several conjectures, which we also prove in this paper. 

For each family (that of semi-Baxter -- or equivalently, plane -- and that of strong-Baxter permutations), we describe a generating tree, 
which translates into a functional equation for the generating function. 
For semi-Baxter permutations, it is solved using (a variant of) the kernel method: 
this gives an expression for the generating function while also proving its D-finiteness. 
From the obtained generating function, we derive closed formulas for the semi-Baxter numbers, a recurrence that they satisfy, as well as their asymptotic behavior. 
For strong-Baxter permutations, we show that their generating function is (a slight modification of) 
that of a family of walks in the quarter plane, which is known to be non D-finite.
\end{abstract}

\section{Introduction}

The purpose of this article is the study of two enumeration sequences, which we call the \emph{semi-Baxter sequence} and the \emph{strong-Baxter sequence}. 
They enumerate, among other objects, families of pattern-avoiding permutations 
closely related to the well-known family of Baxter permutations, and to the slightly less popular one of twisted Baxter permutations, 
which are both counted by the sequence of Baxter numbers~\cite[sequence A001181]{OEIS}. 

Recall that a permutation $\pi = \pi_1 \pi_2 \dots \pi_n$ contains the vincular\footnote{
Throughout the article, we adopt the convention of denoting by the symbol $\underbracket[.5pt][1pt]{~~~}$ the elements 
that are required to be adjacent in an occurrence of a vincular pattern, rather than using the historical notation with dashes 
wherever elements are not required to be consecutive. For instance, our pattern $\commonpat$ is sometimes written $2-41-3$ in the literature.} 
pattern $\commonpat$ 
if there exists a subsequence $\pi_i \pi_j \pi_{j+1} \pi_k$ of $\pi$ (with $i<j<k-1$), 
called an \emph{occurrence} of the pattern, 
that satisfies $\pi_{j+1} < \pi_i < \pi_k < \pi_j$. 
Containment and occurrences of the patterns $\baxpat$, $\twistedpat$, $2\underbracket[.5pt][1pt]{14}3$ and $\underbracket[.5pt][1pt]{14}23$ are defined similarly. 
A permutation not containing a pattern avoids it. 
Baxter permutations~\cite[among many others]{BM} are those that avoid both $\commonpat$ and $\baxpat$, 
while twisted Baxter permutations~\cite[and references therein]{Twis} are the ones avoiding $\commonpat$ and $\twistedpat$. 
We denote by $Av(P)$ the family of permutations avoiding all patterns in $P$. 

The two sequences that will be our main focus are first the one enumerating permutations avoiding $\commonpat$, called \emph{semi-Baxter permutations}, 
and second the one enumerating permutations avoiding all three patterns $\commonpat$, $\baxpat$ and $\twistedpat$, called \emph{strong-Baxter permutations}.
Remark that a permutation avoiding the (classical) pattern $231$ necessarily avoids $\commonpat$, $\baxpat$ and $\twistedpat$, 
and recall that $Av(231)$ is enumerated by the sequence of Catalan numbers. 
Therefore, the definitions in terms of pattern-avoidance and the enumeration results given above can be summarized as shown in Figure~\ref{fig:inclusions}. 

\begin{figure}[ht]
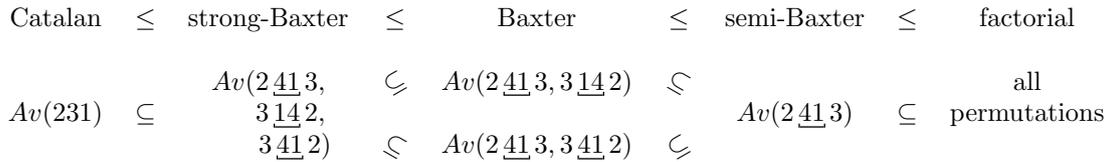

\begin{center}
\begin{tabular}{ccccccccc}
 Catalan & $\leq$ & strong-Baxter & $\leq$ & Baxter & $\leq$ & semi-Baxter & $\leq$ & factorial \\
  & &  &  &  & &  &  & \\
  & & $Av(\commonpat,$ & \rotatebox[origin=c]{30}{$\subseteq$} & $Av(\commonpat,\baxpat)$ & \rotatebox[origin=c]{-30}{$\subseteq$} &  &  & all \\
 $Av(231)$ & $\subseteq$ & ~~~~~$\baxpat,$ & & & & $Av(\commonpat)$ & $\subseteq$ & permutations \\
  & & ~~~~~~$\twistedpat)$ & \rotatebox[origin=c]{-30}{$\subseteq$} & $Av(\commonpat,\twistedpat)$ & \rotatebox[origin=c]{30}{$\subseteq$} &  &  & \\
\end{tabular}
\end{center}
\caption{Sequences from Catalan to factorial numbers, with nested families of pattern-avoiding permutations that they enumerate. }
\label{fig:inclusions}
\end{figure}

\bigskip

The focus of this paper is the study of the two sequences of semi-Baxter and strong-Baxter numbers. 

\smallskip

We deal with the semi-Baxter sequence (enumerating semi-Baxter permutations) in Section~\ref{sec:semi}. 
It has been proved in~\cite{Kasraoui} (as a special case of a general statement) that this sequence also enumerates \emph{plane permutations}, 
defined by the avoidance of $2\underbracket[.5pt][1pt]{14}3$. This sequence is referenced as A117106 in~\cite{OEIS}. 
We first give a more specific proof that plane permutations and semi-Baxter permutations are equinumerous, 
by providing a common generating tree (or succession rule) with two labels
for these two families. Basics and references about generating trees can be found in Section~\ref{sec:review_gentree}. 

We solve completely the problem of enumerating semi-Baxter permutations (or equivalently, plane permutations), 
pushing further the techniques that were used to enumerate Baxter permutations in~\cite{BM}. 
Namely, we start from the functional equation associated with our succession rule for semi-Baxter permutations, 
and we solve it using variants of the kernel method \cite{BM,iteratedKM}. 
This results in an expression for the generating function for semi-Baxter permutations, 
showing that this generating function is D-finite\footnote{Recall that $F(x)$
is D-finite when there exist $k \geq 0$ and polynomials $Q(x), Q_0(x), \dots, Q_k(x)$ of $\mathbb{Q}[x]$ with $Q_k(x) \neq 0$ such that 
$Q_0(x) F(x) + Q_1(x) F'(x) + Q_2(x) F''(x) + \dots + Q_k(x) F^{(k)}(x) = Q(x)$.}. 
From it, we obtain several formulas for the semi-Baxter numbers: 
first, a complicated closed formula;
second, a simple recursive formula; 
and third, three simple closed formulas that were conjectured by D. Bevan~\cite{BevanPrivate}. 

The problem of enumerating plane permutations was posed by M.~Bousquet-M\'elou and S.~Butler in~\cite{BB}. 
Some conjectures related to this enumeration problem were later proposed, in particular by D.~Bevan~\cite{bevan,BevanPrivate} and M.~Martinez and C.~Savage~\cite{savage}. 
Not only do we solve the problem of enumerating plane permutations (or equivalently, semi-Baxter permutations) completely,
but we also prove these conjectures. 
In addition, from one of these (former) conjectures (relating the semi-Baxter sequence to sequence \text{A005258} of~\cite{OEIS}, whose terms are sometimes called Ap\'ery numbers),
we easily deduce the asymptotic behavior of semi-Baxter numbers.

We mention that it has been conjectured in~\cite{BaxterShattuck} by A.~Baxter and M.~Shattuck that permutations avoiding $\underbracket[.5pt][1pt]{14}23$ 
are also enumerated by the same sequence, but we have not been able to prove it. 

\smallskip

In Section~\ref{sec:strong}, we focus on the study of strong-Baxter permutations and of the strong-Baxter sequence. 
Again, we provide a generating tree for strong-Baxter permutations, and translate the corresponding succession rule into a functional equation for their generating function. 
However, we do not solve the equation using the kernel method. 
Instead, from the functional equation, we prove that the generating function for strong-Baxter permutations is a very close relative 
of the one for a family of walks in the quarter plane studied in~\cite{bostan}.
As a consequence, the generating function for strong-Baxter permutations is not D-finite. 
Families of permutations with non D-finite generating functions are quite rare in the literature on pattern-avoiding permutations 
(although mostly studied for classical patterns, instead of vincular ones -- see the analysis in~\cite{nonDF1,nonDF2}): 
this makes the example of strong-Baxter permutations particularly interesting.

\bigskip

The article is next organized as follows. 
Section~\ref{sec:review_gentree} recalls easy facts about the Catalan sequence, 
and includes basics about generating trees and succession rules. 
Sections~\ref{sec:semi}, \ref{sec:Bax} and \ref{sec:strong} then focus on the sequences of semi-Baxter numbers, Baxter numbers, and strong Baxter numbers, respectively, 
and on the associated families of pattern-avoiding permutations.

\section{The Catalan family $Av(231)$ and a Catalan succession rule}\label{sec:review_gentree}

Generating trees and succession rules will be important for our work. 
We give a brief general presentation below. Details can be found for instance in~\cite{GFGT,Eco,BM,West_gt}. 
We also review the classical succession rule for Catalan numbers. 
This rule encodes generating trees for many Catalan families (see \cite{Eco}), 
but we will present only a generating tree for the family of permutations avoiding $231$, 
since we will build on it later in this work. 

\medskip

Consider any combinatorial class $\mathcal{C}$, that is to say any set of discrete objects equipped with a notion of size, 
such that there is a finite number of objects of size $n$ for any integer $n$. Assume also that $\mathcal{C}$ contains exactly one object of size $1$. 
A \emph{generating tree} for $\mathcal{C}$ is an infinite rooted tree, whose vertices are the objects of $\mathcal{C}$, 
each appearing exactly once in the tree, 
and such that objects of size $n$ are at level $n$ in the tree, that is to say at distance $n-1$ from the root 
(thus, the root is at level $1$, its children are at level $2$, and so on). 
The children of some object $c \in \mathcal{C}$ are obtained by adding an \emph{atom} (\emph{i.e.}~a piece of object that makes its size increase by $1$) to $c$. 
Of course, since every object should appear only once in the tree, not all additions are possible. 
We should ensure the unique appearance property by considering only additions that follow some restricted rules. 
We will call the \emph{growth} of $\mathcal{C}$ the process of adding atoms following these prescribed rules. 

Our focus in this section is on $Av(231)$, the set of permutations avoiding the pattern $231$:
a permutation $\pi$ avoids $231$ when it does not contain any subsequence $\pi_i \pi_j \pi_k$ (with $i<j<k$) such that $\pi_k < \pi_i < \pi_j$. 
A growth for $Av(132)$ has been originally described in~\cite{West_gt}, by insertion of a maximal element, which can be translated by symmetry 
into a growth for $Av(231)$ by insertion of a maximal or a leftmost element. 
In our paper, we are however interested in a different (and not symmetric) growth for $Av(231)$: by insertion of a rightmost element, in the same flavor as what is done in~\cite{BFR} for subclasses of $Av(231)$. 

Indeed, throughout the paper our permutations will grow by performing ``local expansions'' on the right of any permutation $\pi$. 
More precisely, when inserting $a \in \{1, \dots, n+1\}$ on the right of any $\pi$ of size $n$, 
we obtain the permutation $\pi' = \pi'_1 \dots \pi'_{n}\pi'_{n+1}$ where $\pi'_{n+1}=a$, $\pi'_i = \pi_i$ if $\pi_i < a$ and $\pi'_i = \pi_i +1$ if $\pi_i \geq a$. 
We use the notation $\pi \cdot a$ to denote $\pi'$. 
For instance, $1\,4\,2\,3 \cdot 3 = 1\,5\,2\,4\,3$.
This is easily understood on the diagrams representing permutations (which consist of points in the Cartesian plane at coordinates $(i,\pi_i)$): 
a local expansion corresponds to adding a new point on the right of the diagram, which lies vertically between two existing points (or below the lowest, or above the highest), 
and finally normalizing the picture obtained -- see Figure~\ref{fig:av231}.
These places where new elements may be inserted are called \emph{sites} of the permutation. 

\begin{figure}[ht]
\begin{center}
\includegraphics[scale=0.75]{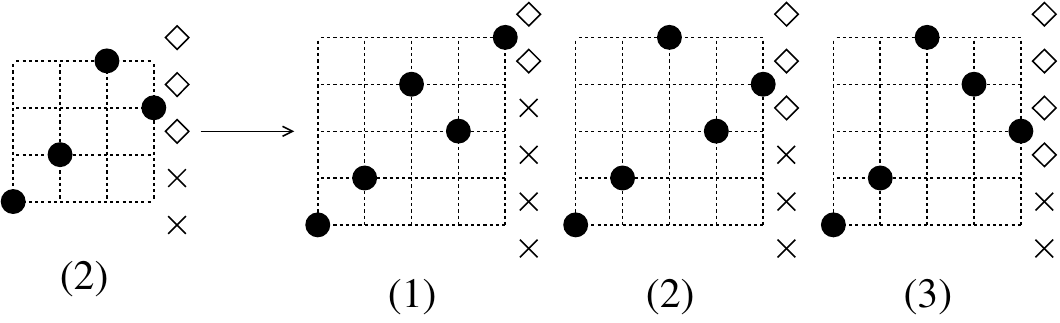}
\end{center}
\caption{
The growth of a permutation avoiding $231$: active sites are marked with $\Diamond$ and non-active sites by $\times$.}
\label{fig:av231}
\end{figure}

Clearly, performing this growth without restriction on the values $a$ would produce a generating tree for the family of all permutations. 
To ensure that only permutations avoiding $231$ appear in the tree (that all such appear exactly once being then obvious), 
insertions are not possible in all sites, but only in those such that the insertion does not create an occurrence of $231$ -- see Figure~\ref{fig:av231}. 
Such sites are called \emph{active sites}. 
In the considered example, the active sites of $\pi \in Av(231)$ are easily characterized as those above the largest element $\pi_i$ such that
there exists $j$ with $i<j$ and $\pi_i<\pi_j$. 
The first few levels of the generating tree for $Av(231)$ are shown in Figure~\ref{fig:gen_tree} (left). 

\begin{figure}[ht]
\begin{center}
\includegraphics[scale=0.45]{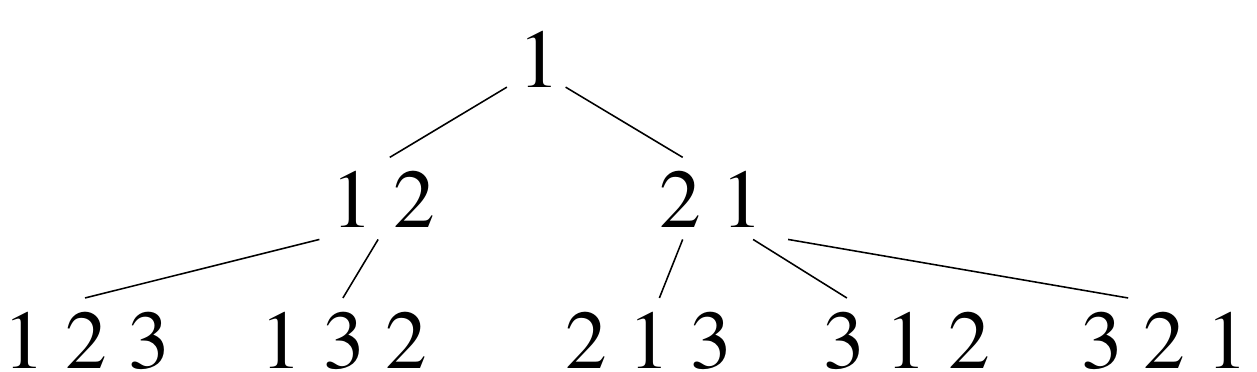} \qquad \includegraphics[scale=0.4]{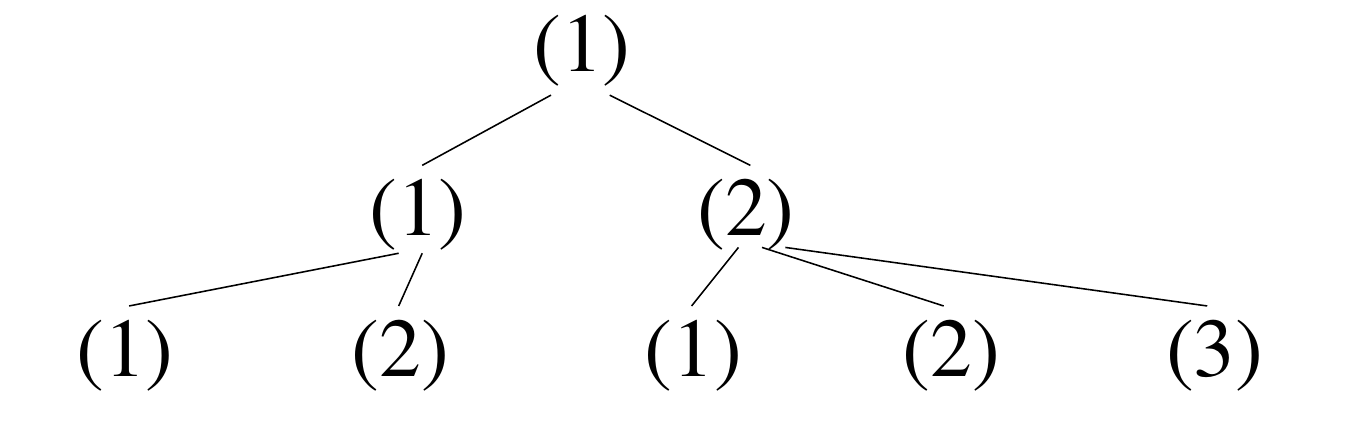} 
\end{center}
\caption{Two ways of looking at the generating tree for $Av(231)$: with objects (left) and with labels from the succession rule $\Omega_{Cat}$ (right).}
\label{fig:gen_tree}
\end{figure}

\medskip

Of importance for enumeration purposes is the general shape of a generating tree, not the specific objects labeling its nodes. 
From now on, when we write generating tree, we intend this shape of the tree, without the objects labeling the nodes. 
A \emph{succession rule} is a compact way of representing such a generating tree for a combinatorial class $\mathcal{C}$ without referring to its objects, but identifying them with \emph{labels}. 
Therefore, a succession rule is made of one starting label corresponding to the label of the root, and of productions encoding the way labels spread in the generating tree. 
From the beginnings~\cite{West_gt}, generating trees and succession rules have been used to derive enumerative results.
As we explain in~\cite{paper1}, the sequence enumerating the class $\mathcal{C}$ can be recovered from the succession rule itself, 
without reference to the specifics of the objects in $\mathcal{C}$: 
indeed, the $n$th term of the sequence is the total number of labels (counted with repetition) that are produced from the root by $n-1$ applications of the productions, 
or equivalently, the number of nodes at level $n$ in the generating tree. 

From the growth for $Av(231)$ described above, 
examining carefully how the number of active sites evolves when performing insertions (as done in~\cite{West_gt} or~\cite{BFR} for example), 
we obtain the following (and classical) succession rule associated with Catalan numbers 
(corresponding to the tree shown in Figure~\ref{fig:gen_tree}, right): 
$$\Omega_{Cat}=\left\{\begin{array}{ll}
(1)\\
(k) \rightsquigarrow (1), (2), \dots , (k), (k+1). \end{array}\right.$$
The intended meaning of the label $(k)$ is the number of active sites of a permutation, minus $1$. 
In Figure~\ref{fig:av231} and similar figures later on, the labels of the permutations are indicated below them.

\section{Semi-Baxter numbers}\label{sec:semi}
\subsection{Definition, context, and summary of our results} \label{sec:intro_semiBax} 

\begin{definition}
\label{dfn:semiBaxPerm}
A \emph{semi-Baxter permutation} is a permutation that avoids the pattern $\commonpat$. 
\end{definition}

\begin{definition}
\label{dfn:semiBaxNumber}
The sequence of \emph{semi-Baxter numbers}, $(SB_n)$, is defined by taking $SB_n$ to be the number of semi-Baxter permutations of size $n$. 
\end{definition}

The name ``semi-Baxter'' has been chosen because $\commonpat$ is one of the two patterns (namely, $\commonpat$ and $\baxpat$) whose avoidance defines the family of so-called Baxter permutations~\cite{CGHK78,Gire}, 
enumerated by the Baxter numbers~\cite[sequence \text{A001181}]{OEIS}. 
(Remark that up to symmetry, we could have defined semi-Baxter permutations by the avoidance of $\baxpat$, obtaining the same sequence.) 
Note that $\commonpat$ is also one of the two patterns (namely, $\commonpat$ and $\twistedpat$) whose avoidance defines the family of so-called twisted Baxter permutations~\cite{Rea05,West}, also enumerated by the Baxter numbers. 

The first few terms of the sequence of semi-Baxter numbers are 
\[1,2,6,23,104,530,2958,17734,112657, 750726, 5207910, 37387881, 276467208, \ldots\]

The family of semi-Baxter permutations already appears in the literature, at least on a few occasions. 
Indeed, it is an easy exercise to see that the avoidance of $\commonpat$ is equivalent to that of the barred pattern $25\bar{3}14$, which has been studied by L. Pudwell in~\cite{Pudwell}. 
(The definition of barred patterns, which is not essential to our work, can be found in~\cite{Pudwell}.)
In this work, by means of enumeration schemes L. Pudwell suggests that the enumerative sequences of semi-Baxter permutations and \emph{plane permutations} 
(see Definition~\ref{dfn:planePerm} below) coincide. 
This conjecture has later been proved as a special case of a general statement in~\cite[Corollary 1.9(b)]{Kasraoui}.
In Section~\ref{sec:GenTreeSemiBax} we give an alternative and self-contained proof that plane permutations and semi-Baxter permutations are indeed equinumerous.
The sequence enumerating plane permutations has already been registered on the OEIS~\cite{OEIS} as sequence \text{A117106}, which is then our sequence $(SB_n)$. 

The enumeration of plane permutations has received a fair amount of attention in the literature. 
It first arose as an open problem in~\cite{BB}. This family of permutations, indeed, was identified as a superset of forest-like permutations, which are thoroughly investigated in~\cite{BB}. 
A forest-like permutation is any permutation whose Hasse graph is a forest -- the Hasse graph of a permutation $\pi$ of size $n$ is the oriented graph on the vertex set $\{1,\ldots,n\}$, 
which includes an edge from $i$ to $j$ (for $i<j$) if and only if $\pi(i)<\pi(j)$ and there is no $k$ such that $i<k<j$ and $\pi(i)<\pi(k)<\pi(j)$, 
and with all edges pointing upward. 
For instance, the Hasse graphs of permutations $2413$ and $2143$ are depicted in Figure~\ref{fig:Hasse}. 
In addition, it shows that the Hasse graph of $2413$ is plane (\emph{i.e.} can be drawn on the plane without any crossing of edges), while the one of $2143$ is not.
\begin{figure}[ht]
\centering
\begin{tikzpicture}[scale=0.5]
\begin{scope}
\node  at (-.5,-.5) {$1$};
\node at (-.5,2.5) {$2$};
\node at (2.5,2.5) {$4$};
\node at (2.5,-.5) {$3$};
\filldraw[black] (0,0) circle (2pt); 
\filldraw[black] (2,0) circle (2pt); 
\filldraw[black] (2,2) circle (2pt); 
\filldraw[black] (0,2) circle (2pt); 
\draw[->] (0,0) -- (0,2);
\draw[->] (0,0) -- (2,2);
\draw[->] (2,0) -- (2,2);
\end{scope}\end{tikzpicture}\hspace{2.5cm}
\begin{tikzpicture}[scale=0.5]
\begin{scope}
\node  at (-.5,-.5) {$1$};
\node at (-.5,2.5) {$3$};
\node at (2.5,2.5) {$4$};
\node at (2.5,-.5) {$2$};
\filldraw[black] (0,0) circle (2pt); 
\filldraw[black] (2,0) circle (2pt); 
\filldraw[black] (2,2) circle (2pt); 
\filldraw[black] (0,2) circle (2pt); 
\draw[->] (0,0) -- (0,2);
\draw[->] (0,0) -- (2,2);
\draw[->] (2,0) -- (0,2);
\draw[->] (2,0) -- (2,2);
\end{scope}
\end{tikzpicture}
\caption{\label{fig:Hasse}The Hasse graphs of permutations $2413$ (left) and $2143$ (right).}
\end{figure}
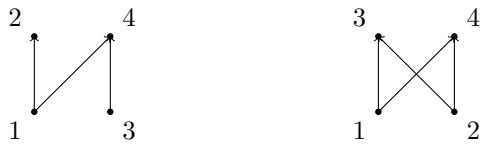

The authors of~\cite{BB} named plane permutations those permutations whose Hasse graph is plane, 
characterized them as those avoiding $\planepat$, and called for their enumeration.
This enumerative problem was studied with a quite experimental perspective, as one case of many, through enumeration schemes by L.~Pudwell in~\cite{Pudwell}. 
Then, D.~Bevan computed the first 37 terms of their enumerative sequence~\cite{bevan}, by iterating a functional equation provided in \cite[Theorem 13.1]{bevan}. 
Although~\cite{bevan} gives a functional equation for the generating function of semi-Baxter numbers, 
there is no formula (closed or recursive) for $SB_n$. 
There is however a conjectured explicit formula, which, in addition, gives information about their asymptotic behavior (see Proposition~\ref{prop:conj} and Corollary~\ref{cor:asym_SB_n}).
Another recursive formula for $SB_n$ has been conjectured by M.~Martinez and C.~Savage in~\cite{savage}, in relation with \emph{inversion sequences} avoiding some patterns 
(definition and precise statement are provided in Subsection~\ref{sec:inv}). 
Finally, closed formulas for $SB_n$ have been conjectured by D.~Bevan in~\cite{BevanPrivate}. 

\medskip

Our results about semi-Baxter numbers are the following. 
Most importantly, we solve the problem of enumerating semi-Baxter permutations, 
as well as plane permutations. 
We provide a common succession rule that governs their growth, presented in Subsection~\ref{sec:GenTreeSemiBax}. 
Next, in Subsection~\ref{sec:inv}, we show that inversion sequences avoiding the patterns $210$ and $100$ 
grow along the same rule, thereby proving a first formula for $SB_n$ and settling a conjecture  of~\cite{savage}.
Then, by means of standard tools we translate the succession rule into a functional equation whose solution is the generating function of semi-Baxter numbers. Subsection~\ref{sec:resultsGF} gives a closed expression for the generating function of semi-Baxter numbers, 
together with closed, recursive and asymptotic formulas for $SB_n$. 
The results of this subsection are proved in Subsection~\ref{sec:proofs_GF} following the same method as in~\cite{MBM_Xin}: 
the functional equation is solved using the obstinate kernel method, 
a first closed formula for $SB_n$ is obtained by the Lagrange inversion, 
the recursive formula follows from it applying the method of \emph{creative telescoping}~\cite{zeilberger}, 
which can then be applied again to prove that the explicit formulas for $SB_n$ conjectured in~\cite{BevanPrivate} are correct.
Finally, we prove the formula for $SB_n$ conjectured in~\cite{bevan}, which in turn gives us the asymptotic behavior of $SB_n$. 

\subsection{Succession rule for semi-Baxter permutations and plane permutations}\label{sec:GenTreeSemiBax}

Similarly to the case of permutations avoiding $231$ described in the introduction,
we will provide below generating trees for semi-Baxter permutations and for plane permutations, 
where permutations grow by insertion of an element on the right. 
Recall that for any permutation $\pi$ of size $n$ and for any $a \in \{1, \dots, n+1\}$, $\pi \cdot a$ denotes the permutation 
$\pi'$ of size $n+1$ such that $\pi'_{n+1}=a$, $\pi'_i = \pi_i$ if $\pi_i < a$ and $\pi'_i = \pi_i +1$ if $\pi_i \geq a$. 

\begin{proposition}
\label{prop:semi_rule}
A generating tree for semi-Baxter permutations can be obtained by insertions on the right, 
and it is isomorphic to the tree generated by the following succession rule: 
$$\Omega_{semi}=\left\{\begin{array}{ll}
(1,1)\\
(h,k) \rightsquigarrow \hspace{-3mm}& (1,k+1), \dots , (h,k+1)\\
& (h+k,1), \dots , (h+1,k). \end{array}\right.$$
\end{proposition}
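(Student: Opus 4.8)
The plan is to establish the generating tree by carefully analyzing how active sites of a semi-Baxter permutation evolve under the right-insertion operation $\pi \mapsto \pi \cdot a$, and then showing that the right labels can be chosen so that the bookkeeping matches $\Omega_{semi}$ exactly. First I would pin down which sites are \emph{active}: inserting a new rightmost value $a$ into $\pi$ of size $n$ creates an occurrence of $\commonpat = 2\underbracket[.5pt][1pt]{41}3$ precisely when there exist indices $i<j$ with $j = n$ (so that $\pi_j \pi_{j+1} = \pi_n\,a$ is the adjacent ``41'' block after relabelling) and a value to the left playing the role of the ``2'': concretely, the insertion of $a$ at a given height is forbidden iff there is some $\pi_i$ with $i<n$ such that (in $\pi\cdot a$) the relative order of $\pi_i$, $\pi_n$-updated, $a$, and some earlier entry realizes $\pi_{j+1}<\pi_i<\pi_k<\pi_j$. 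I expect to find that the active sites form a contiguous interval of heights, so that it is meaningful to split the count of active sites into two parameters $h$ and $k$ according to their position relative to the last entry $\pi_n$ — roughly, $h$ counting active sites below-or-at a certain threshold and $k$ counting those above.

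Next I would interpret the pair $(h,k)$ combinatorially: a natural guess, to be verified, is that $h+1$ counts the active sites that lie weakly above $\pi_n$ (equivalently, that remain ``safe'' because they are not blocked by the $41$-descent ending at position $n$) while $k+1$ counts active sites that lie below $\pi_n$ but above the relevant ``2''-threshold; the root $\pi = 1$ having label $(1,1)$ fixes the normalization. Then the core of the proof is the case analysis of the production: inserting $a$ into a permutation with label $(h,k)$, one checks what label the child $\pi \cdot a$ receives depending on which of the $h+k$ active sites was used. For an insertion ``high'' relative to $\pi_n$ — there should be $h$ such choices — the new last entry is large, no new $41$-block is dangerous in the same way, and the vertical parameter increments to $k+1$ while the other parameter runs through $1,\dots,h$, giving the productions $(1,k+1),\dots,(h,k+1)$. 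For an insertion ``low'' relative to $\pi_n$ — there should be $k$ such choices — the entry $\pi_n$ becomes part of a newly forbidden configuration, which kills exactly the sites above a new threshold; here one gets the productions $(h+k,1),\dots,(h+1,k)$, the first coordinate jumping to $h+k$ because all previously-counted sites of both kinds survive as ``high'' sites relative to the new last entry.

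The main obstacle I anticipate is proving rigorously that the active sites always form an interval and that the split into $h$ and $k$ is \emph{stable} under the growth — that is, that after any insertion the new active sites are again described by a clean pair of parameters with no ``scattered'' dead sites appearing in the middle. This requires a precise combinatorial lemma about occurrences of $\commonpat$: I would argue that if some site strictly between two active sites were dead, there would be a witness to a $\commonpat$-occurrence that, by a monotonicity argument on the heights of the witnessing ``2'' and ``3'', would also forbid one of the supposedly-active neighbours, a contradiction. Once this interval structure and the identification of $h,k$ are in hand, checking the two families of productions is a finite, if slightly delicate, verification. Finally, since the growth visits every semi-Baxter permutation exactly once (each permutation is obtained from a unique parent by removing its last entry, and removal never creates a $\commonpat$-occurrence), the generating tree for semi-Baxter permutations is isomorphic to the one generated by $\Omega_{semi}$, as claimed.
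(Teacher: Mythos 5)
Your overall strategy --- grow by insertions on the right, characterize the active sites, split their count into two parameters relative to the last entry, and do a case analysis on where the new entry lands --- is the same as the paper's. But two of your key claims are wrong, and each would derail the argument. First, the active sites of a semi-Baxter permutation do \emph{not} form a contiguous interval of heights. The correct characterization (which you only gesture at) is that a site $a$ is non-active exactly when $a\in(\pi_j,\pi_i]$ for some occurrence $\pi_j\pi_i\pi_{i+1}$ of $2\underbracket[.5pt][1pt]{31}$ (i.e.\ $j<i$ and $\pi_{i+1}<\pi_j<\pi_i$), and a union of such intervals can leave gaps. For instance, $\pi=3142$ is semi-Baxter, its only such occurrence is $3\cdot 42$, so the non-active sites are exactly $\{4\}$ and the active ones are $\{1,2,3,5\}$ --- not an interval. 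The interval lemma you propose to prove by a ``monotonicity argument'' is therefore false. Fortunately it is also unnecessary: the label $(h,k)$ only records how many active sites lie weakly below and strictly above $\pi_n$, with no contiguity needed.

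Second, your case analysis attaches the productions to the wrong cases and is internally inconsistent. Since the forbidden pattern is $\commonpat$, the dangerous insertions are those \emph{below} $\pi_n$ (they create the adjacent ``$41$'' descent); insertions above $\pi_n$ create an ascent and deactivate nothing. Now observe that every label in the row $(h+k,1),\dots,(h+1,k)$ has component sum $h+k+1$, i.e.\ all active sites survive (with the used one split in two); these productions must therefore come from the $k$ insertions above $\pi_n$, not from the ``low'' insertions as you assert --- indeed you claim simultaneously that the low insertions ``kill exactly the sites above a new threshold'' and that ``all previously-counted sites of both kinds survive,'' which cannot both hold. The correct bookkeeping is: for $a>\pi_n$ ($k$ choices) nothing is deactivated and one obtains $(h+k,1),\dots,(h+1,k)$; for $a=\pi_n$ one obtains $(h,k+1)$; and for $a<\pi_n$ ($h-1$ choices) the new non-empty descent $(\pi_n+1)\,a$ deactivates precisely the sites in $(a+1,\pi_n+1]$, yielding $(1,k+1),\dots,(h-1,k+1)$. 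Your guessed interpretation of the labels (with the offsets $h+1$ and $k+1$) is likewise inconsistent with the root having label $(1,1)$. As written, the proposal would not produce the rule $\Omega_{semi}$.
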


\begin{proof}
First, observe that removing the last element of a permutation avoiding $\commonpat$, we obtain a permutation that still avoids $\commonpat$. 
So, a generating tree for semi-Baxter permutations can be obtained with local expansions on the right. 

For $\pi$ a semi-Baxter permutation of size $n$, the \emph{active sites} are by definition 
the points $a$ (or equivalently the values $a$) such that $\pi \cdot a$ is also semi-Baxter, \emph{i.e.}, avoids $\commonpat$. 
The other points $a$ are called non-active sites. 
An occurrence of $2\underbracket[.5pt][1pt]{31}$ in $\pi$ is a subsequence $\pi_j \pi_i \pi_{i+1}$ (with $j<i$) such that $\pi_{i+1}<\pi_j<\pi_i$. 
Obviously, the non-active sites $a$ of $\pi$ are characterized by the fact that $a \in (\pi_j,\pi_i]$ for some occurrence $\pi_j \pi_i \pi_{i+1}$ of $2\underbracket[.5pt][1pt]{31}$. 
We call a \emph{non-empty descent} of $\pi$ a pair $\pi_i \pi_{i+1}$ such that there exists $\pi_j$ that makes $\pi_j \pi_i \pi_{i+1}$ an occurrence of $2\underbracket[.5pt][1pt]{31}$. 
Note that in the case where $\pi_{n-1} \pi_n$ is a non-empty descent, choosing $\pi_j = \pi_{n} +1$ always gives an occurrence of $2\underbracket[.5pt][1pt]{31}$, 
and it is the smallest possible value of $\pi_j$ for which $\pi_j \pi_{n-1} \pi_n$ is an occurrence of $2\underbracket[.5pt][1pt]{31}$. 

To each semi-Baxter permutation $\pi$ of size $n$, we assign a label $(h,k)$, 
where $h$ (resp. $k$) is the number of the active sites of $\pi$ smaller than or equal to (resp. greater than) $\pi_n$. 
Remark that $h,k\geq1$, since $1$ and $n+1$ are always active sites. 
Moreover, the label of the permutation $\pi=1$ is $(1,1)$, which is the root in $\Omega_{semi}$.

Consider a semi-Baxter permutation $\pi$ of size $n$ and label $(h,k)$. 
Proving Proposition~\ref{prop:semi_rule} amounts to showing that permutations $\pi \cdot a$ have labels $(1,k+1), \dots , (h,k+1), (h+k,1), \dots , (h+1,k)$ 
when $a$ runs over all active sites of $\pi$. 
Figure~\ref{fig:semi}, which shows an example of semi-Baxter permutation $\pi$ with label $(2,2)$ and all the corresponding $\pi \cdot a$ with their labels, 
should help understanding the case analysis that follows. 
Let $a$ be an active site of $\pi$. 

Assume first that $a > \pi_n$ (this happens exactly $k$ times), so that $\pi \cdot a$ ends with an ascent. 
The occurrences of $2\underbracket[.5pt][1pt]{31}$ in $\pi \cdot a$ are the same as in $\pi$. 
Consequently, the active sites are not modified, except that the active site $a$ of $\pi$ is now split into two actives sites of $\pi \cdot a$: 
one immediately below $a$ and one immediately above. 
It follows that $\pi \cdot a$ has label $(h+k+1-i,i)$, if $a$ is the $i$-th active site from the top. 
Since $i$ ranges from $1$ to $k$, this gives the second row of the production of $\Omega_{semi}$. 

Assume next that $a=\pi_n$. Then, $\pi \cdot a$ ends with a descent, but an empty one. 
Similarly to the above case, we therefore get one more active site in $\pi \cdot a$ than in $\pi$, 
and $\pi \cdot a$ has label $(h,k+1)$, the last label in the first row of the production of $\Omega_{semi}$. 

Finally, assume that $a < \pi_n$ (this happens exactly $h-1$ times). 
Now, $\pi \cdot a$ ends with a non-empty descent, which is $(\pi_n +1) a$. 
It follows from the discussion at the beginning of this proof that all sites of $\pi \cdot a$ in $(a+1,\pi_n+1]$ become non-active, 
while all others remain active if they were so in $\pi$ (again, with $a$ replaced by two active sites surrounding it, one below it and one above). 
If $a$ is the $i$-th active site from the bottom, it follows that $\pi \cdot a$ has label $(i,k+1)$, hence giving all missing labels in the first row of the production of $\Omega_{semi}$. 
\end{proof}

\begin{figure}[ht]
\centering
\includegraphics[width=0.8\textwidth]{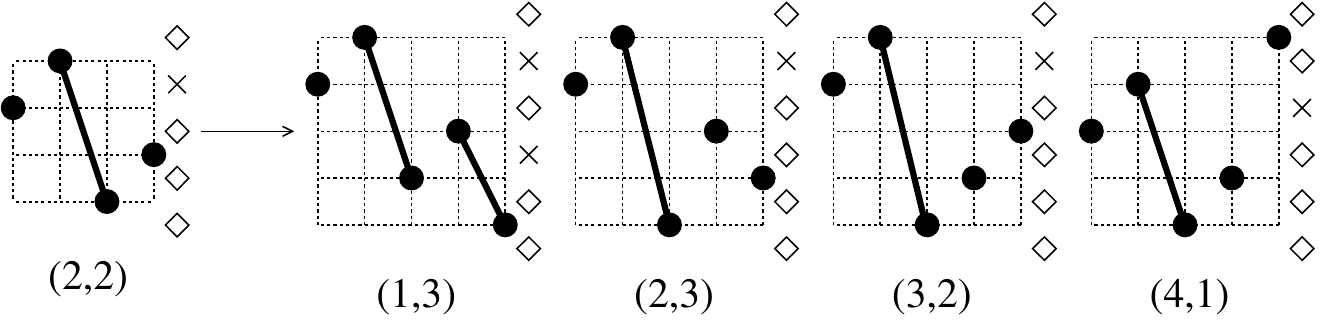}
\caption{The growth of semi-Baxter permutations (with notation as in Figure~\ref{fig:av231}). Non-empty descents are represented with bold lines.}
\label{fig:semi}
\end{figure}

\begin{definition}
A \emph{plane permutation} is a permutation that avoids the vincular pattern $2\underbracket[.5pt][1pt]{14}3$ (or equivalently, the barred pattern $21\bar{3}54$). 
\label{dfn:planePerm}
\end{definition}

\begin{proposition}\label{prop:plane}
A generating tree for plane permutations can be obtained by insertions on the right, 
and it is isomorphic to the tree generated by the succession rule $\Omega_{semi}$.
\end{proposition}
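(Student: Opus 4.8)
The plan is to imitate the proof of Proposition~\ref{prop:semi_rule}, with $\commonpat$ replaced by $\planepat$ and the bookkeeping adjusted. First, deleting the last entry of a permutation avoiding $\planepat$ cannot create an occurrence of it, so plane permutations grow by the right‑insertions $\pi \mapsto \pi \cdot a$, and it suffices to label them so as to recover $\Omega_{semi}$. For a plane permutation $\pi$ of size $n$, a site $a$ is \emph{active} when $\pi \cdot a$ still avoids $\planepat$. Since the inserted entry lies at the rightmost position, any occurrence of $\planepat$ it creates must use it as the final letter ``$3$''; hence $a$ is non-active precisely when $\pi$ has an adjacent ascent $\pi_j \pi_{j+1}$ together with an earlier entry $\pi_i$ (with $i<j$ and $\pi_j < \pi_i < \pi_{j+1}$, i.e.\ an occurrence of $2\underbracket[.5pt][1pt]{14}$) such that $a \in (\pi_i, \pi_{j+1}]$. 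Calling such a pair $\pi_j\pi_{j+1}$ a \emph{non-empty ascent}, the non-active sites of $\pi$ form the union of the intervals $(\pi_i,\pi_{j+1}]$, in exact analogy with the role of occurrences of $2\underbracket[.5pt][1pt]{31}$ (non-empty descents) in the proof of Proposition~\ref{prop:semi_rule}. I also record the observation dual to the one made there: the site $\pi_n+1$ is always active for a plane permutation, for otherwise $\pi$ would have an adjacent ascent $\pi_j\pi_{j+1}$ with $\pi_{j+1}>\pi_n$ and an earlier $\pi_i$ with $\pi_j<\pi_i\le\pi_n$, whence $\pi_i\pi_j\pi_{j+1}\pi_n$ would already be an occurrence of $\planepat$ in $\pi$.

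I would then assign to $\pi$ the label $(h,k)$, where $h$ counts the active sites strictly above $\pi_n$ and $k$ those at most $\pi_n$ — the \emph{opposite} convention to the one used for semi-Baxter permutations, forced by the fact that, for $\planepat$, the configuration at the end of a word that restricts future growth is an ascent rather than a descent. Since $1$ and $n+1$ are always active we have $h,k\ge1$, and the permutation $1$ receives $(1,1)$, the root of $\Omega_{semi}$. To obtain the production, let $a$ range over the active sites of $\pi$. If $a\le\pi_n$ (which happens $k$ times), then $\pi\cdot a$ ends with a descent, empty when $a=\pi_n$, so no non-empty ascent is created, the occurrences of $\planepat$ are unchanged, and $a$ simply splits into two active sites flanking the new last entry; tracking the two coordinates exactly as in Proposition~\ref{prop:semi_rule} yields the labels $(h+k,1),(h+k-1,2),\dots,(h+1,k)$. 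If $a>\pi_n$ (which happens $h$ times), then $\pi\cdot a$ ends with the ascent $\pi_n\,a$; by the observation above, together with the fact that the value $\pi_n+1$ occurs in $\pi$ to the left of $\pi_n$, this ascent has least witness $\pi_n+1$ whenever a witness exists, so it newly traps exactly the sites at values in $(\pi_n+1,a]$. A direct count then shows that $\pi\cdot a$ receives the label $(i,k+1)$ when $a$ is the $i$-th active site of $\pi$ from the top among those above $\pi_n$; as $i$ runs over $1,\dots,h$ this produces $(1,k+1),\dots,(h,k+1)$. Together with the previous list, this is exactly the production of $\Omega_{semi}$.

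The step I expect to demand the most care is the case $a>\pi_n$: one must simultaneously follow the sites trapped by the ascents already present in $\pi$ (merely shifted by the insertion) and those newly trapped by $\pi_n\,a$, and verify that the two contributions combine to leave precisely $i$ active sites above the new last entry and $k+1$ below it. I would also stress in the write-up that this is not a disguised symmetry argument: plane and semi-Baxter permutations are not exchanged by any symmetry of $S_n$, so the fact that both growths obey $\Omega_{semi}$ — and hence, with Proposition~\ref{prop:semi_rule}, the equinumerosity of plane and semi-Baxter permutations, which is the point — genuinely relies on this parallel but independent computation.
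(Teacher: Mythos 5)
Your proof is correct and follows essentially the same route as the paper's: right insertions, the characterization of non-active sites via occurrences of $2\underbracket[.5pt][1pt]{14}$, the swapped label convention $(h,k)$, and the same case analysis on $a$ versus $\pi_n$. The only cosmetic difference is that you fold the empty-ascent case $a=\pi_n+1$ into the general case $a>\pi_n$ by first proving that the site $\pi_n+1$ is always active (a fact the paper uses implicitly), which works out to the same production.
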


\begin{proof}
The proof of this statement follows applying the same steps as in the proof of Proposition~\ref{prop:semi_rule}. 
First, observe that removing the last element of a permutation avoiding $\planepat$, we obtain a permutation that still avoids $\planepat$. 
So, a generating tree for plane permutations can be obtained with local expansions on the right.

For $\pi$ a plane permutation of size $n$, the active sites are by definition 
the values $a$ such that $\pi \cdot a$ is also plane, \emph{i.e.}, avoids $\planepat$. The other points $a$ are called non-active sites. 
An occurrence of $2\underbracket[.5pt][1pt]{13}$ in $\pi$ is a subsequence $\pi_j \pi_i \pi_{i+1}$ (with $j<i$) such that $\pi_i<\pi_j<\pi_{i+1}$. 
Note that the non-active sites $a$ of $\pi$ are characterized by the fact that $a \in (\pi_j,\pi_{i+1}]$ for some occurrence $\pi_j \pi_i \pi_{i+1}$ of $2\underbracket[.5pt][1pt]{13}$. 
We call a \emph{non-empty ascent} of $\pi$ a pair $\pi_i \pi_{i+1}$ such that there exists $\pi_j$ that makes $\pi_j \pi_i \pi_{i+1}$ an occurrence of $2\underbracket[.5pt][1pt]{13}$. 
As in the proof of Proposition~\ref{prop:semi_rule}, if $\pi_{n-1} \pi_n$ is a non-empty ascent, $\pi_j = \pi_{n-1} +1$ is the smallest value of $\pi_j$ such that $\pi_j \pi_{n-1} \pi_n$ is an occurrence of $2\underbracket[.5pt][1pt]{13}$. 

Now, to each plane permutation $\pi$ of size $n$, we assign a label $(h,k)$, 
where $h$ (resp. $k$) is the number of the active sites of $\pi$ greater than (resp. smaller than or equal to) $\pi_n$.
Remark that $h,k\geq1$, since $1$ and $n+1$ are always active sites. 
Moreover, the label of the permutation $\pi=1$ is $(1,1)$, which is the root in $\Omega_{semi}$. 
The proof is concluded by showing that the permutations $\pi \cdot a$ have labels $(1,k+1), \dots , (h,k+1), (h+k,1), \dots , (h+1,k)$,
when $a$ runs over all active sites of $\pi$. 

If $a \leq \pi_n$, $\pi \cdot a$ ends with a descent, and 
it follows as in the proof of Proposition~\ref{prop:semi_rule} that the active sites of $\pi \cdot a$ are the same as those of $\pi$ (with $a$ split into two sites). 
This gives the second row of the production of $\Omega_{semi}$ (the label $(h+k+1-i,i)$ for $1 \leq i \leq k$ corresponding to $a$ being the $i$-th active site from the bottom). 

If $a=\pi_n +1$, $\pi \cdot a$ ends with an empty ascent, and hence has label $(h,k+1)$ again as in the proof of Proposition~\ref{prop:semi_rule}. 

Finally, if $a>\pi_n +1$ (which happens $h-1$ times), $\pi \cdot a$ ends with a non-empty ascent.
The discussion at the beginning of the proof implies that all sites of  $\pi \cdot a$ in $(\pi_n+1,a]$ are deactivated while all others remain active.
If $a$ is the $i$-th active site from the top, it follows that $\pi \cdot a$ has label $(i,k+1)$, hence giving all missing labels in the first row of the production of $\Omega_{semi}$. 
\end{proof}

Because the two families of semi-Baxter and of plane permutations grow according to the same succession rule, we obtain the following.  

\begin{corollary}\label{cor:equinum}
Semi-Baxter permutations and plane permutations are equinumerous. 
In other words, $SB_n$ is also the number of plane permutations of size $n$.
\end{corollary}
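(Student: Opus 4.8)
The plan is to combine the two preceding propositions directly. By Proposition~\ref{prop:semi_rule}, the generating tree for semi-Baxter permutations (obtained by insertions on the right, with the labelling described there) is isomorphic to the tree generated by the succession rule $\Omega_{semi}$ with root $(1,1)$. By Proposition~\ref{prop:plane}, the generating tree for plane permutations (also obtained by insertions on the right, with its own labelling) is isomorphic to the tree generated by \emph{the same} succession rule $\Omega_{semi}$, again with root $(1,1)$. Since isomorphism of generating trees is transitive, the generating tree for semi-Baxter permutations and the generating tree for plane permutations are isomorphic to one another.

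From here I would invoke the general principle recalled in Section~\ref{sec:review_gentree}: in a generating tree for a combinatorial class $\mathcal{C}$ (where objects of size $n$ sit at level $n$), the number of objects of size $n$ equals the number of nodes at level $n$. This number depends only on the shape of the tree, not on the objects decorating it. Hence two combinatorial classes whose generating trees are isomorphic have the same number of objects of each size. Applying this with $\mathcal{C}$ being semi-Baxter permutations on one side and plane permutations on the other, and using the isomorphism established in the previous paragraph, we conclude that for every $n$ the number of semi-Baxter permutations of size $n$ equals the number of plane permutations of size $n$, i.e. $SB_n$ counts plane permutations of size $n$ as well.

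There is no real obstacle here: the corollary is a formal consequence of the two propositions plus the level-counting principle for generating trees, and the only thing worth spelling out is that both trees share the \emph{same} root label $(1,1)$ and the \emph{same} productions, so the isomorphism is immediate rather than requiring any further bijective argument. The proof is therefore a two-sentence deduction. If one wanted to be slightly more self-contained, one could add the remark that $\Omega_{semi}$ thus provides a single succession rule simultaneously encoding both growths, and that the sequence $(SB_n)$ can be read off from $\Omega_{semi}$ alone by counting, for each $n$, the labels produced from the root $(1,1)$ after $n-1$ applications of the productions — but this is a comment rather than a necessary step.
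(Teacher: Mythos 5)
Your proposal is correct and matches the paper exactly: the corollary is deduced there in one line from Propositions~\ref{prop:semi_rule} and~\ref{prop:plane}, since both families grow according to the same succession rule $\Omega_{semi}$ and the enumeration sequence is determined by the shape of the generating tree alone. Your additional remarks about the shared root label and the level-counting principle are just the explicit spelling-out of what the paper leaves implicit.
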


Note that the two generating trees for semi-Baxter and for plane permutations which are encoded by $\Omega_{semi}$ are of course isomorphic: 
this provides a size-preserving bijection between these two families. 
It is however not defined directly on the objects themselves, but only referring to the generating tree structure. 

\subsection{Another occurrence of semi-Baxter numbers}
\label{sec:inv}
In this section we provide another occurrence of semi-Baxter numbers that is not in terms of pattern-avoiding permutations, yet of pattern-avoiding inversion sequences. 
This occurrence appears as a conjecture in the work of M.~Martinez and C.~Savage on these families of objects~\cite{savage}. 

Recall that an \emph{inversion sequence} of size $n$ is an integer sequence $(e_1,e_2,\dots,e_n)$ satisfying $0\leq e_i< i$ for all $i\in\{1,2,\dots,n\}$. 
In \cite{savage} the authors introduce the notion of pattern avoidance in inversion sequences in quite general terms. 

Of interest to us here is only the set $\mathbf{I}_n(210,100)$ of inversion sequences avoiding the patterns $210$ and $100$: an inversion sequence $(e_1,\ldots,e_n)$ \emph{contains} the pattern $q$, with $q=q_1\ldots q_k$, if there exist $k$ indices $1\leq i_1<\ldots<i_k\leq n$ such that $e_{i_1}\ldots e_{i_k}$ is order-isomorphic to $q$; otherwise $(e_1,\ldots, e_n)$ is said to \emph{avoid} $q$. For instance, the sequence $(0,0,1,3,1,3,2,7,3)$ avoids both $210$ and $100$, while $(0,0,1,2,0,1,1)$ avoids $210$ but contains $100$.

The family $\cup_n\mathbf{I}_n(210,100)$ can moreover be characterized as follows.
A \emph{weak left-to-right maximum} of an inversion sequence $(e_1,e_2,\dots,e_n)$ is an entry $e_i$ satisfying $e_i \geq e_j$ for all $j\leq i$. 
Every inversion sequence $e$ can be decomposed in $e^{top}$, which is the (weakly increasing) sequence of weak left-to-right maxima of $e$, 
and $e^{bottom}$, which is the (possibly empty) sequence of the remaining entries of $e$. 
\begin{proposition}[\cite{savage}, Observation 10] \label{prop:charact_inv_seq}
An inversion sequence $e$ avoids $210$ and $100$ if and only if $e^{top}$ is weakly increasing and $e^{bottom}$ is strictly increasing.
\end{proposition}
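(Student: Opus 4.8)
The plan is to prove both directions of the equivalence by reasoning directly about the positions of the entries relative to the weak left-to-right maxima. Recall that $e^{top}$ is the subsequence of weak left-to-right maxima (which is weakly increasing by definition, so that part of the statement is automatic), and $e^{bottom}$ collects the remaining entries; the content of the proposition is really the characterization that avoidance of $210$ and $100$ is equivalent to $e^{bottom}$ being \emph{strictly} increasing.

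\medskip

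For the forward direction, I would assume $e = (e_1,\dots,e_n)$ avoids $210$ and $100$ and show $e^{bottom}$ is strictly increasing. Suppose not: then there are indices $i < j$ with $e_i, e_j$ both in $e^{bottom}$ and $e_i \geq e_j$. Since $e_i$ is not a weak left-to-right maximum, there exists $\ell < i$ with $e_\ell > e_i$ (strict, because $e_i$ fails the ``$\geq$ for all $j \le i$'' condition precisely by being strictly exceeded). Then $e_\ell, e_i, e_j$ sit at positions $\ell < i < j$ with $e_\ell > e_i \geq e_j$. If $e_i > e_j$ this triple is an occurrence of $210$; if $e_i = e_j$ it is an occurrence of $100$. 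Either way we contradict the avoidance hypothesis, so $e^{bottom}$ must be strictly increasing.

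\medskip

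For the reverse direction, assume $e^{top}$ is weakly increasing (automatic) and $e^{bottom}$ is strictly increasing, and show $e$ avoids both patterns. I would take any three positions $i_1 < i_2 < i_3$ and argue that $e_{i_1} e_{i_2} e_{i_3}$ cannot be order-isomorphic to $210$ or to $100$; in both forbidden patterns the last two entries are weakly below the first and the middle entry is $\geq$ the last. The key observation is that in an occurrence of $210$ or $100$, the entry $e_{i_1}$ is strictly greater than $e_{i_2}$, so $e_{i_2}$ cannot be a weak left-to-right maximum (it is strictly exceeded by something to its left, namely $e_{i_1}$), hence $e_{i_2} \in e^{bottom}$. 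Now I split on whether $e_{i_3}$ is a weak left-to-right maximum. If $e_{i_3} \in e^{bottom}$, then $e_{i_2}$ and $e_{i_3}$ are two entries of $e^{bottom}$ with $i_2 < i_3$, so strict increase gives $e_{i_2} < e_{i_3}$, contradicting $e_{i_2} \geq e_{i_3}$ (which holds in both $210$ and $100$). If instead $e_{i_3} \in e^{top}$, i.e.\ $e_{i_3}$ is a weak left-to-right maximum, then $e_{i_3} \geq e_{i_1}$ since $i_1 < i_3$; but in $210$ and $100$ we have $e_{i_1} > e_{i_3}$, again a contradiction. So no such triple exists.

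\medskip

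The main obstacle is mostly bookkeeping: being careful that ``$e_i$ is not a weak left-to-right maximum'' yields a \emph{strict} inequality $e_\ell > e_i$ from some earlier position (this follows immediately from the definition, since the condition $e_i \ge e_j$ for all $j \le i$ fails), and correctly tracking which of the three entries of a putative pattern occurrence lands in $e^{top}$ versus $e^{bottom}$. There is a small subtlety in that the first entry of an occurrence need not itself be a weak left-to-right maximum, but the argument above never needs that — it only uses that weak left-to-right maxima dominate everything to their left and that $e^{bottom}$ is strictly increasing. Everything else is a routine case check, so once the structure above is laid out the proof is essentially complete.
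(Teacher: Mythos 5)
Your proof is correct. Note that the paper does not prove this statement at all --- it is quoted from Martinez and Savage (Observation~10 of their paper) --- so there is no in-paper argument to compare against; your two-direction argument (for the forward direction, extracting a strict majorant $e_\ell > e_i$ to the left of any non-maximum and combining it with a violation of strict increase in $e^{bottom}$ to produce a $210$ or $100$ occurrence; for the converse, observing that the middle entry of any putative occurrence must lie in $e^{bottom}$ and then splitting on where the last entry lies) is a complete, elementary, and self-contained verification of the cited observation.
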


The enumeration of inversion sequences avoiding $210$ and $100$ is solved in \cite{savage}, with a summation formula as reported in Proposition~\ref{prop:enum_inv_seq} below. 
Let top$(e)=\max\,(e^{top})$ and bottom$(e)=\max\, (e^{bottom})$. 
If $e^{bottom}$ is empty, the convention is to take $\text{bottom}(e)=-1$.

\begin{proposition}[\cite{savage}, Theorem 32] \label{prop:enum_inv_seq}
Let $Q_{n,a,b}$ be the number of inversion sequences $e\in\mathbf{I}_n(210,100)$ with top$(e)=a$ and
bottom$(e)=b$. Then 
\[Q_{n,a,b}=\sum_{i=-1}^{b-1}Q_{n-1,a,i}+\sum_{j=b+1}^a Q_{n-1,j,b},\]
with initial conditions $Q_{n,a,b}=0$, if $n\leq a$, and $Q_{n,a,-1}=\frac{n-a}{n}\binom{n-1+a}{a}$. Hence,
\begin{equation}\label{conjsav}
|\mathbf{I}_n(210,100)|=\sum_{a=0}^{n-1}\sum_{b=-1}^{a-1} Q_{n,a,b}=\frac{1}{n+1}\binom{2n}{n}+\sum_{a=0}^{n-1}\sum_{b=0}^{a-1} Q_{n,a,b}.
\end{equation}
\end{proposition}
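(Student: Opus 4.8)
The plan is to prove the recurrence for $Q_{n,a,b}$ by a direct combinatorial decomposition obtained by deleting the last entry of an inversion sequence, with the two cases dictated by the structural characterization of Proposition~\ref{prop:charact_inv_seq}; the initial conditions and the closed form for the slice $b=-1$ then follow from classical facts about weakly increasing inversion sequences (ballot numbers), and the summation formula is a regrouping of $\mathbf{I}_n(210,100)$ by the statistics $\mathrm{top}$ and $\mathrm{bottom}$.

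For the recurrence, fix $n\ge 2$, let $e=(e_1,\dots,e_n)\in\mathbf{I}_n(210,100)$ with $\mathrm{top}(e)=a$ and $\mathrm{bottom}(e)=b$, and set $e'=(e_1,\dots,e_{n-1})$. By Proposition~\ref{prop:charact_inv_seq}, deleting $e_n$ keeps the weak left-to-right maxima of $e'$ weakly increasing and the remaining entries strictly increasing, so $e'\in\mathbf{I}_{n-1}(210,100)$. Two cases arise, according to whether $e_n$ is a weak left-to-right maximum of $e$. If it is, then $e_n=\mathrm{top}(e)=a$, the weak left-to-right maxima of $e'$ are those of $e$ with the final entry $a$ removed, $\mathrm{bottom}(e')=b$, and $\mathrm{top}(e')=j$ for some $j$ with $b<j\le a$ (here $j\le a$ because $e^{top}$ is weakly increasing, and $j\ge b+1$ because $\mathrm{bottom}(e')<\mathrm{top}(e')$ always holds). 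Conversely, appending to any $e'\in\mathbf{I}_{n-1}(210,100)$ a value $a$ with $\mathrm{top}(e')\le a\le n-1$ produces a legal element of $\mathbf{I}_n(210,100)$ with the prescribed top and bottom, and the bound $j\le n-2$ needed for $e'$ to exist is automatically built in since $Q_{n-1,j,b}=0$ for $j\ge n-1$; this case therefore contributes $\sum_{j=b+1}^{a}Q_{n-1,j,b}$. If instead $e_n$ is not a weak left-to-right maximum, then it belongs to the bottom part, forcing $e_n=\mathrm{bottom}(e)=b$, $\mathrm{top}(e')=a$, and $\mathrm{bottom}(e')=i$ for some $i$ with $-1\le i\le b-1$ (strict because the bottom part is strictly increasing, and $i=-1$ is the case of an empty bottom part); conversely appending any $b$ with $\mathrm{bottom}(e')<b<\mathrm{top}(e')$ to such an $e'$ is legal, so this case contributes $\sum_{i=-1}^{b-1}Q_{n-1,a,i}$. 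Since $b\le a-1<a$, the two cases are distinguished by whether $e_n=a$ or $e_n=b$, so the decomposition is a partition and adding the two contributions gives the recurrence.

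For the initial conditions, $Q_{n,a,b}=0$ whenever $n\le a$ is immediate, since $\mathrm{top}(e)=e_n\le n-1$. For $Q_{n,a,-1}$, an inversion sequence with empty bottom part is exactly a weakly increasing inversion sequence $0=e_1\le\dots\le e_n$ with $e_i\le i-1$ (any such sequence automatically avoids $210$ and $100$), and we need those with $e_n=a$. Substituting $f_i=(i-1)-e_i$ rewrites this as the number of sequences $0=f_1,\dots,f_n$ with $f_i\ge 0$, $f_{i+1}\le f_i+1$ and $f_n=n-1-a$, a standard ballot-type count equal to $\tfrac{n-a}{n}\binom{n-1+a}{a}$; equivalently, one checks that $\tfrac{n-a}{n}\binom{n-1+a}{a}$ obeys the Catalan-triangle recurrence $Q_{n,a,-1}=\sum_{j=0}^{a}Q_{n-1,j,-1}$ (the $b=-1$ instance of the recurrence already proved) with $Q_{1,0,-1}=1$.

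Finally, grouping $\mathbf{I}_n(210,100)$ by the pair $(\mathrm{top},\mathrm{bottom})$ gives $|\mathbf{I}_n(210,100)|=\sum_{a=0}^{n-1}\sum_{b=-1}^{a-1}Q_{n,a,b}$, because $\mathrm{top}(e)=e_n\in\{0,\dots,n-1\}$ and $\mathrm{bottom}(e)\in\{-1,0,\dots,\mathrm{top}(e)-1\}$. Separating the terms with $b=-1$ and using $\sum_{a=0}^{n-1}\tfrac{n-a}{n}\binom{n-1+a}{a}=\tfrac{1}{n+1}\binom{2n}{n}$ --- the number of weakly increasing inversion sequences of size $n$, a Catalan number --- yields the second stated form. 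I expect the main obstacle to be not any single computation but the careful bookkeeping of index ranges in the two-case decomposition: one must verify that the boundary terms $j=a$ and $i=-1$, and the degenerate situations of an empty bottom part or $e_n=\mathrm{top}(e')$, are each accounted for exactly once. The ballot-number identities used for the $b=-1$ slice and the Catalan summation are classical.
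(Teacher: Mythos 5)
Your argument is correct. Note first that the paper does not prove this proposition at all: it is quoted from Martinez and Savage \cite{savage} (their Theorem 32), so there is no in-paper proof to compare against. Your self-contained derivation --- deleting the last entry and splitting according to whether $e_n$ is a weak left-to-right maximum (hence equal to $\mathrm{top}(e)=a$) or a bottom entry (hence equal to $\mathrm{bottom}(e)=b$) --- is sound, and it is in fact the time-reversal of exactly the growth by addition of a rightmost entry $p$ that the paper uses in its proof of Theorem~\ref{conjecture}: your two cases correspond to the cases $p\ge a$ and $b<p<a$ there, just bookkept with the statistics $(\mathrm{top},\mathrm{bottom})$ instead of the labels $(h,k)=(a-b,\,n-a)$. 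The index ranges, the emptiness of the first sum when $b=-1$, and the ballot-number and Catalan identities for the $b=-1$ slice all check out. One small slip: you twice assert $\mathrm{top}(e)=e_n$, which is false in general (e.g.\ $e=(0,1,0)$ has $\mathrm{top}(e)=1\ne e_3$), since $\mathrm{top}(e)$ is the maximum of $e$, not its last entry. Fortunately in both places you only use the consequences $\mathrm{top}(e)\le n-1$ and $\mathrm{top}(e)\in\{0,\dots,n-1\}$, which follow from $\mathrm{top}(e)=\max_i e_i\le n-1$, so the argument is unaffected; just replace $e_n$ by $\max_i e_i$ in those two sentences.
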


We prove the following conjecture of~\cite[Section 2.27]{savage}. 
\begin{thm}\label{conjecture}
There are as many inversion sequences of size $n$ avoiding $210$ and $100$ as plane permutations of size $n$. In other words $|\mathbf{I}_n(210,100)| = SB_n$.
\end{thm}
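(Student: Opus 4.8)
Looking at this, the natural strategy is clear: since semi-Baxter permutations and plane permutations are already known to be equinumerous (Corollary~\ref{cor:equinum}) and both grow according to $\Omega_{semi}$, it suffices to show that inversion sequences avoiding $210$ and $100$ also grow according to $\Omega_{semi}$.

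=== PROOF PROPOSAL ===

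\begin{proof}[Proof proposal]
The plan is to exhibit a generating tree for $\mathbf{I}_n(210,100)$ — where an inversion sequence of size $n$ grows into one of size $n+1$ by appending a new last entry $e_{n+1}\in\{0,1,\dots,n\}$ — and to show that, with a suitable choice of labels, this generating tree is isomorphic to the one encoded by $\Omega_{semi}$. Combined with Proposition~\ref{prop:semi_rule} (or Proposition~\ref{prop:plane}) and Corollary~\ref{cor:equinum}, this immediately yields $|\mathbf{I}_n(210,100)| = SB_n$.

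First I would check that the growth is well-defined: removing the last entry of an inversion sequence avoiding $210$ and $100$ yields one that still avoids $210$ and $100$, so every such sequence is reached in the tree, and each exactly once (since the parent is obtained by deleting the last entry). Next, using the characterization in Proposition~\ref{prop:charact_inv_seq} — namely that $e$ avoids $210$ and $100$ iff $e^{top}$ is weakly increasing and $e^{bottom}$ is strictly increasing — I would determine precisely which values $v\in\{0,\dots,n\}$ may be appended to a given $e\in\mathbf{I}_n(210,100)$ so that the result still avoids both patterns. The key observation is that appending $v$ keeps us in the class exactly when either $v\geq\text{top}(e)$ (so the new entry becomes a weak left-to-right maximum, extending $e^{top}$ weakly increasingly) or $\text{bottom}(e) < v < \text{top}(e)$ (so the new entry joins $e^{bottom}$, and this preserves strict increasingness of $e^{bottom}$); appending $v\leq\text{bottom}(e)$ would create a $210$ or a $100$. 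Thus the number of valid appendable values splits naturally into a ``high'' part and a ``low'' part.

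The crux is then to define the label of $e\in\mathbf{I}_n(210,100)$ as the pair $(h,k)$ where, say, $k = n - \text{top}(e) + 1$ counts the number of appendable values $v$ with $v\geq \text{top}(e)$, and $h = \text{top}(e) - \text{bottom}(e)$ counts the appendable values with $\text{bottom}(e)<v<\text{top}(e)$, plus one. (The exact bookkeeping of the $\pm 1$'s should be pinned down against the root: the sequence of size $1$ is $(0)$, with $\text{top}=0$, $\text{bottom}=-1$, and one must arrange that its label is $(1,1)$.) I would then verify that appending each of the $k$ high values and each of the $h-1$ low values produces children whose $(\text{top},\text{bottom})$ pairs yield exactly the labels $(1,k+1),\dots,(h,k+1),(h+k,1),\dots,(h+1,k)$ prescribed by $\Omega_{semi}$: appending a high value $v\geq\text{top}(e)$ raises top (or keeps it and lowers the count of remaining high slots) while possibly enlarging the gap above bottom, and appending a value in the middle band turns that value into the new top, drastically changing $h$ and $k$; a careful case analysis — entirely parallel in spirit to the one in the proof of Proposition~\ref{prop:semi_rule} — matches the two productions term by term.

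The main obstacle I anticipate is purely combinatorial bookkeeping: getting the two coordinates $h$ and $k$ (and their offsets) defined so that \emph{both} rows of the production of $\Omega_{semi}$ come out correctly, and in particular handling the boundary/degenerate cases (empty $e^{bottom}$, i.e. $\text{bottom}(e)=-1$; the entry $v$ equal to $\text{top}(e)$ which extends $e^{top}$ but is a borderline case between the ``high'' and ``middle'' regimes). Once the correct labelling is fixed, the verification is a routine, if slightly tedious, case check, with no deep difficulty — the real content is the bijection-free transfer of the succession rule $\Omega_{semi}$ to this third family of objects.
\end{proof}
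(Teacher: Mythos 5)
Your proposal follows essentially the same route as the paper: grow $\mathbf{I}_n(210,100)$ by appending a rightmost entry, use Proposition~\ref{prop:charact_inv_seq} to see that the appendable values are exactly those $v$ with $\mathrm{bottom}(e) < v \leq n$, and label by $(h,k)=(\mathrm{top}(e)-\mathrm{bottom}(e),\ n-\mathrm{top}(e))$ --- note that the paper takes $k=n-\mathrm{top}(e)$, one less than your tentative $n-\mathrm{top}(e)+1$, which resolves exactly the offset you flagged (the borderline value $v=\mathrm{top}(e)$ is grouped with the ``middle'' block and produces the child $(h,k+1)$). The remaining case analysis is precisely the routine verification you describe, and it goes through.
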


\begin{proof}
We prove the statement by showing a growth for $\cup_n \mathbf{I}_n(210,100)$ which can be encoded by $\Omega_{semi}$.
Given an inversion sequence $e\in\mathbf{I}_n(210,100)$, we make it grow by adding a rightmost entry.

Let $a=\mbox{top}(e)$ and $b=\mbox{bottom}(e)$. 
From Proposition~\ref{prop:charact_inv_seq}, it follows that $f=(e_1,\dots,e_n,p)$ is an inversion sequence of size $n+1$ avoiding $210$ and $100$ 
if and only if $n\geq p>b$. 
Moreover, if $p\geq a$, then $f^{top}$ comprises $p$ in addition to the elements of $e^{top}$, and $f^{bottom}=e^{bottom}$;  
and if $b<p<a$, then $f^{top}=e^{top}$ and $f^{bottom}$ comprises $p$ in addition to the elements of $e^{bottom}$. 

Now, we assign to any $e\in\mathbf{I}_n(210,100)$ the label $(h,k)$, where $h=a-b$ and $k=n-a$. The sequence $e=(0)$ has label $(1,1)$, since $a=$ top$(e)=0$ and $b=$ bottom$(e)=-1$.
Let $e$ be an inversion sequence of $\mathbf{I}_n(210,100)$ with label $(h,k)$. 
The labels of the inversion sequences of $\mathbf{I}_{n+1}(210,100)$ produced adding a rightmost entry $p$ to $e$ are 
\begin{itemize}
 \item $(h+k,1), (h+k-1,2),\dots, (h+1,k)$ when $p = n, n-1,\dots, a+1$, 
 \item $(h,k+1)$ when $p=a$, 
 \item $(1,k+1), \dots, (h-1,k+1)$ when $p=a-1,\dots, b+1$, 
\end{itemize}
which concludes the proof that the growth of $\cup_n \mathbf{I}_n(210,100)$ by addition of a rightmost entry is encoded by $\Omega_{semi}$.
\end{proof}

\begin{remark}
In addition, in~\cite[Section 2.27]{savage} it is proved that the set $\mathbf{I}_n(210,100)$ has as many inversion sequences as the sets $\mathbf{I}_n(210,110)$, $\mathbf{I}_n(201,100)$, and $\mathbf{I}_n(201,101)$. 
Thus, our proof that the semi-Baxter numbers enumerate inversion sequences avoiding $210$ and $100$ also solves the enumeration problem of exactly four cases of~\cite[Table 2]{savage}.
\end{remark}
\subsection{Enumerative results}  \label{sec:resultsGF}

For $h,k\geq 1$, let $S_{h,k}(x)\equiv S_{h,k}$ denote the size generating function for semi-Baxter permutations having label $(h,k)$. 
The rule $\Omega_{semi}$ translates into a functional equation for the generating function $S(x;y,z)\equiv S(y,z)=\sum_{h,k\geq 1} S_{h,k} y^h z^k$. 

\begin{proposition} \label{prop:funcEqSemiBax}
The generating function $S(y,z)$ satisfies the following functional equation:
\begin{equation}\label{funeq1} S(y,z)= xyz+\frac{xyz}{1-y} \left( S(1,z) - S(y,z) \right) + \frac{xyz}{z-y} \left( S(y,z) - S(y,y) \right) .\end{equation}
\end{proposition}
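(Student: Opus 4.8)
The strategy is the standard one for turning a succession rule with two‐dimensional labels into a functional equation: write the catalytic variables $y$ and $z$ to track the two components $h$ and $k$ of the label $(h,k)$, and sum the contributions of the production of $\Omega_{semi}$ over all nodes of the generating tree. Concretely, I would let $S_{h,k}$ be the generating function (by size) of semi‐Baxter permutations carrying label $(h,k)$, so that $S(y,z)=\sum_{h,k\ge 1}S_{h,k}\,y^h z^k$, and I would establish \eqref{funeq1} by checking that each term on the right‐hand side accounts for exactly one part of the rule $\Omega_{semi}$: the isolated monomial $xyz$ for the root $(1,1)$, and the two sums
$$
\sum_{i=1}^{h}(i,k+1)\qquad\text{and}\qquad\sum_{i=1}^{k}(h+k+1-i,i)
$$
for the two rows of the production applied to a node of label $(h,k)$.

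**Key steps.** First I would handle the first row: a node $(h,k)$ produces children with labels $(1,k+1),\dots,(h,k+1)$, contributing $x\,z^{k+1}\sum_{i=1}^{h}y^i = x z^{k+1}\,\frac{y-y^{h+1}}{1-y}$ to $S(y,z)$; the extra factor $x$ records the increase in size. Summing over all $(h,k)$ with weight $S_{h,k}$ gives, after recognising $\sum_{h,k}S_{h,k}z^{k+1}\cdot\frac{y-y^{h+1}}{1-y}$, precisely $\frac{xyz}{1-y}\bigl(S(1,z)-S(y,z)\bigr)$: the term $\sum_h S_{h,k}y$ becomes $y\,S(1,z)$ once the $y$–variable is specialised to $1$, and $\sum_h S_{h,k}y^{h+1}$ becomes $y\,S(y,z)$, with the common factor $z$ coming from $z^{k+1}$ summed against $\sum_k S_{h,k}z^k$. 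Second I would handle the second row: a node $(h,k)$ produces $(h+k,1),(h+k-1,2),\dots,(h+1,k)$, i.e.\ labels $(h+k+1-i,i)$ for $i=1,\dots,k$, contributing $x\sum_{i=1}^{k} y^{\,h+k+1-i} z^{i}$. Here the geometric sum in $i$ is $\sum_{i=1}^{k}y^{h+k+1-i}z^i = \frac{y z(y^k - z^k)\,y^{h}}{?}$ — more cleanly, $y^{h+1}\sum_{i=1}^{k}y^{k-i}z^{i}=y^{h+1}\cdot\frac{y^k z - y\, z^k}{y-z}$ up to the right normalisation — and summing against $S_{h,k}$ and collecting yields $\frac{xyz}{z-y}\bigl(S(y,z)-S(y,y)\bigr)$, the specialisation $z\mapsto y$ accounting for the $y^k\to z^k$–type terms. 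Finally I would add the root contribution $xyz$ (the permutation of size $1$, label $(1,1)$, has generating function $xyz$) and check that the three pieces assemble to exactly \eqref{funeq1}.

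**Main obstacle.** The conceptual content is light; the only real care needed is bookkeeping in the second sum, where the label $(h+k+1-i,i)$ mixes both old components in the first coordinate, so one must be careful to reindex correctly and to verify that the telescoping of the two geometric series produces exactly the kernel factors $\tfrac{1}{1-y}$ and $\tfrac{1}{z-y}$ with the stated specialisations $S(1,z)$ and $S(y,y)$, and no spurious boundary terms. I would double-check the bijective reading of the rule against Figure~\ref{fig:semi} (the example with label $(2,2)$) as a sanity check, and verify the equation modulo low powers of $x$ using the first few semi-Baxter numbers $1,2,6,23,\dots$ before declaring the computation complete.
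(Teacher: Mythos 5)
Your proposal is correct and follows essentially the same route as the paper: translate each row of the production of $\Omega_{semi}$ into a geometric sum in $y$ (resp.\ in $y/z$), sum against $S_{h,k}$, and add the root term $xyz$, yielding exactly the two kernel terms $\frac{xyz}{1-y}(S(1,z)-S(y,z))$ and $\frac{xyz}{z-y}(S(y,z)-S(y,y))$. The only blemish is the intermediate formula $y^{h+1}\frac{y^kz-yz^k}{y-z}$ for the second row, which should read $y^{h+1}z\,\frac{y^k-z^k}{y-z}$; you flag this yourself and your final collected expression is the correct one.
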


\begin{proof}
Starting from the growth of semi-Baxter permutations according to $\Omega_{semi}$ we write:
\begin{align*}
S(y,z) &= xyz+x\sum_{h,k\geq 1} S_{h,k} \left( (y+y^2+\dots +y^h)z^{k+1}+(y^{h+k}z+y^{h+k-1}z^2+\dots +y^{h+1}z^k) \right) \\
&= xyz+x\sum_{h,k\geq 1} S_{h,k}\left( \frac{1-y^{h}}{1-y} y\, z^{k+1} + \frac{1-\left(\frac{y}{z}\right)^{k}}{1-\frac{y}{z}}y^{h+1} z^{k}\right) \\
&= xyz+ \frac{xyz}{1-y} \left ( S(1,z) - S(y,z) \right) + \frac{xyz}{z-y} \left ( S(y,z) - S(y,y) \right) \, .  \qedhere
\end{align*}
\end{proof}

From Proposition~\ref{prop:funcEqSemiBax}, a lot of information can be derived about the generating function $S(1,1)$ of semi-Baxter numbers, and about these numbers themselves. 
The results we obtain are stated below, but the proofs are postponed to Subsection~\ref{sec:proofs_GF}.  
A Maple worksheet recording the computations in these proofs is available from the authors' webpage\footnote{for instance at \url{http://user.math.uzh.ch/bouvel/publications/Semi-Baxter.mw}}. 

First, using the ``obstinate kernel method'' (used for instance in~\cite{BM} to enumerate Baxter permutations),  
we can give an expression for $S$. We let $\bar{a}$ denote $1/a$, and $\Omega_{\geq}[F(x;a)]$ denote the \emph{non-negative part} of $F$ in $a$, where $F$ is a formal power series in $x$ whose coefficients are Laurent polynomials in $a$. More precisely, if $F(x;a)=\sum_{n\geq0,i\in\mathbb{Z}}f(n,i)a^ix^n$, then
\[\Omega_{\geq}[F(x;a)]=\sum_{n\geq0}x^n\sum_{i\geq0}f(n,i)\,a^i.\]
\begin{thm}\label{thm:GFSemiBaxter}
Let $W(x;a)\equiv W$ be the unique formal power series in $x$ such that
\begin{equation*}
W=x\bar{a}(1+a)(W+1+a)(W+a).
\end{equation*}
The series solution $S(y,z)$ of eq.~\eqref{funeq1} satisfies 
\[S(1+a,1+a)={\Omega}_{\geq}\left[F(a,W)\right],\] 
where 
the function $F(a,W)$ is defined by
\begin{equation}\label{eqP}
\begin{array}{ll}F(a,W)=&(1+a)^2\,x+\left(\bar{a}^5+\bar{a}^4+2+2a\right)\,x\,W\\
\\
&+\left(-\bar{a}^5-\bar{a}^4+\bar{a}^3-\bar{a}^2-\bar{a}+1\right)\,x\,W^2+\,\left(\bar{a}^4-\bar{a}^2\right)\,x\,W^3.
\end{array}
\end{equation}\smallskip
\end{thm}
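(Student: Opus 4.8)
The plan is to apply the \emph{obstinate kernel method} to the functional equation~\eqref{funeq1}, exactly in the spirit of the treatment of Baxter permutations in~\cite{BM} and of the walk problem in~\cite{MBM_Xin}. First I would clear denominators in~\eqref{funeq1} to put it in \emph{kernel form}: collecting the coefficient of $S(y,z)$ on one side, the equation reads
\[
K(y,z)\,S(y,z) = xyz(1-y)(z-y) - xyz(z-y)\,S(1,z) + xyz(1-y)\,S(y,y),
\]
where the kernel is $K(y,z) = (1-y)(z-y) - xyz(z-y) + xyz(1-y)$, a polynomial of degree one in $z$ and degree two in $y$. The idea is to substitute, in place of $(y,z)$, pairs that cancel the kernel, thereby obtaining relations among the one-variable unknown series $S(1,z)$ and $S(y,y)$. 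Because the kernel is linear in $z$, I first solve $K(y,z)=0$ for $z$ as a function of $y$, call it $z=Z(y)$, and then I look for the group of birational transformations of the $(y,z)$-plane that fix the kernel curve; as in~\cite{BM}, this group is expected to have order six (a dihedral-like group generated by the two involutions swapping the two roots in $y$ and the two roots in $z$). Applying all elements of this group to the kernel equation produces a system of (six, or effectively three after symmetry) linear equations whose unknowns are the specializations of $S(1,z)$ and $S(y,y)$ along the orbit.

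The second step is the standard ``obstinate'' extraction: I would take a suitable alternating sum (a signed combination around the orbit) of these equations so that the specializations of $S(1,z)$ cancel and one is left with a single functional relation determining $S(y,y)$, or more precisely $S(1+a,1+a)$ after the change of variables $y\mapsto 1+a$ (which is the natural uniformizing parameter making the catalytic series $W$ algebraic). Here $W \equiv W(x;a)$ is introduced as the branch of the kernel-type equation $W = x\bar a(1+a)(W+1+a)(W+a)$ that is a formal power series in $x$; one checks it has a unique such solution by the implicit function theorem / Lagrange inversion, since the right-hand side vanishes at $x=0$. Substituting and simplifying, the signed combination expresses $S(1+a,1+a)$ as the positive part in $a$ of an explicit rational function of $a$ and $W$ — and a (possibly lengthy but mechanical) simplification identifies this rational function with $F(a,W)$ as in~\eqref{eqP}. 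The role of the operator $\Omega_{\ge}$ is exactly the usual one in the obstinate kernel method: the raw combination contains ``negative'' powers of $a$ coming from the other orbit elements, and since $S(1+a,1+a)$ as a series in $x$ has only nonnegative powers of $a$ in its coefficients (a fact one can read off from the succession rule $\Omega_{semi}$, where $h\ge 1$ forces $y$-degree $\ge 1$, i.e. $a$-degree $\ge 0$ after the shift), those spurious terms must be discarded, which is precisely the statement $S(1+a,1+a) = \Omega_{\ge}[F(a,W)]$.

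I expect the main obstacle to be twofold. The first genuine difficulty is identifying the correct group of the kernel and verifying it is finite of order six: one must check that the composition of the two natural involutions has finite order, which is a small but essential computation and is exactly the point at which this problem is tractable while the strong-Baxter one is not. The second, more bookkeeping-heavy obstacle is the algebraic simplification: after the change of variables $y = 1+a$ and after writing every orbit element in terms of $a$ and $W$, the signed combination is a big rational expression, and reducing it to the compact cubic-in-$W$ form $F(a,W)$ requires care (this is the step the authors offload to a Maple worksheet, and I would do likewise, while spelling out the structure of the computation). A minor additional point to justify carefully is the legitimacy of the substitutions themselves — that $Z(y)$ and the orbit elements are well-defined formal power series in $x$ (or in $x$ with Laurent-polynomial coefficients in $a$) so that plugging them into~\eqref{funeq1} is valid in the ring of formal power series — which is handled, as usual, by checking the relevant valuations in $x$.
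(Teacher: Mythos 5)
Your plan is the paper's plan: kernel form, obstinate kernel method via the group of birational transformations fixing the kernel, a change of variable to the algebraic series $W=Z_+-(1+a)$, and extraction of the non-negative part in $a$. The outline is sound, but three details of your execution differ from what actually happens and would need correcting. First, the kernel is \emph{quadratic} in $z$ (the term $\frac{xyz}{z-y}$ contributes $xyz^2$ after clearing denominators), not linear as you assert; there are two roots $Z_\pm(a)$ and one must argue that only $Z_+$ is a formal power series in $x$ with Laurent-polynomial coefficients in $a$ before substituting it. Second, the group generated by the two involutions has order $10$, not $6$; one then selects the five orbit elements $(f_1,f_2)$ for which $f_1(a,Z_+)$ and $f_2(a,Z_+)$ are legitimate power-series substitutions, yielding five equations in six unknowns.

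Third, and most substantively: the elimination does \emph{not} produce ``a single functional relation determining $S(y,y)$''. With five equations and six unknowns one can only reduce to a relation involving \emph{two} surviving unknown series, namely
\[
S(1+a,1+a)+\frac{(1+a)^2x}{a^4}\,S\bigl(1,1+\bar a\bigr)+P(a,Z_+)=0 .
\]
The load-bearing step is then the observation that $S(1+a,1+a)$ contributes only non-negative powers of $a$ while $\frac{(1+a)^2x}{a^4}S(1,1+\bar a)$ contributes only powers $\le -2$, so that applying $\Omega_{\ge}$ to $-P(a,Z_+)$ isolates $S(1+a,1+a)$ exactly. Your justification of $\Omega_{\ge}$ (discarding ``spurious terms coming from the other orbit elements'') misses that the negative powers being discarded constitute an entire unknown series; without identifying that series and bounding its $a$-support from above, discarding negative powers is not justified. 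With these corrections your argument coincides with the paper's proof, including the final substitution $z=W+1+a$ and the use of the defining equation of $W$ to clear the denominator of $-P(a,W+1+a)$ and land on the cubic form $F(a,W)$.
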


Note that in Theorem~\ref{thm:GFSemiBaxter}, $W$ and $F(a,W)$ are algebraic series in $x$ whose coefficients are Laurent polynomials in $a$ with rational coefficients. 
It follows, as in~\cite[page 6]{BM}, that $S(1+a,1+a)={\Omega}_{\geq}[F(a,W)]$ is D-finite\footnote{By definition, a multivariate generating function 
$F(\textbf{x})$, where $\textbf{x} = (x_1, \dots, x_k)$, is D-finite when it satisfies a system of linear partial differential equations, one for each 
$i = 1\dots k$, of the form $Q_{i,0}(\textbf{x})F(\textbf{x}) + Q_{i,1}(\textbf{x}) \frac{\partial}{\partial x_i} F(\textbf{x}) + 
Q_{i,2} \frac{\partial^2}{\partial x_i^2} F(\textbf{x}) + \dots + Q_{i,r_i} \frac{\partial^{r_i}}{\partial x_i^{r_i}} F(\textbf{x})= 0$, where the $Q_{i,j}$ are polynomials. 
As stated in~\cite[Theorem B.3]{Flaj}, D-finiteness is preserved by specialization of the variables.}, 
and hence also $S(1,1)$. 

Using the Lagrange Inversion, we can derive from Theorem~\ref{thm:GFSemiBaxter} an explicit but complicated expression for the coefficients of $S(1,1)$, 
which is reported in Corollary~\ref{cor:CoefSemiBaxter} in Subsection~\ref{sec:proofs_GF}. 
Surprisingly this complicated expression hides a very simple recurrence, which also appears as a conjecture in~\cite{bevan}. 

\begin{proposition}\label{prop:recSemiBaxter}
The numbers $SB_n$ are recursively characterized by $SB_0=0$, $SB_1=1$ and for $n\geq2$
\begin{equation}\label{recurrenceSB}
SB_n=\frac{11n^2+11n-6}{(n+4)(n+3)}SB_{n-1}+\frac{(n-3)(n-2)}{(n+4)(n+3)}SB_{n-2}.
\end{equation}
\end{proposition}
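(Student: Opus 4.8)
The starting point is Theorem~\ref{thm:GFSemiBaxter}, which gives $S(1,1)$ as the non-negative part (in $a$) of $F(a,W)$, where $W\equiv W(x;a)$ is the algebraic series satisfying the cubic $W=x\bar a(1+a)(W+1+a)(W+a)$. The plan is to first extract an explicit formula for $SB_n=[x^n]S(1,1)$ by Lagrange inversion, and then verify the proposed two-term recurrence by \emph{creative telescoping}. First I would set up the Lagrange inversion: the cubic defining $W$ has the form $W=x\,\phi(W)$ with $\phi(W)=\bar a(1+a)(W+1+a)(W+a)$, so for any Laurent polynomial $H$ in $W$ and $a$ one has $[x^n]H(W)=\tfrac1n[W^{n-1}](H'(W)\phi(W)^n)$. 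Applying this to each of the four monomials $W^j$ appearing in $F(a,W)$ (with $j=0,1,2,3$, and noting the extra factor of $x$ in $F$, which shifts $n\mapsto n-1$), expanding $\phi(W)^{n-1}=\bar a^{n-1}(1+a)^{n-1}(W+1+a)^{n-1}(W+a)^{n-1}$ by the binomial theorem in the two linear factors, and then reading off the coefficient of the appropriate power of $W$, yields a triple sum over binomial coefficients whose summand is a monomial in $a$. Taking the non-negative part in $a$ amounts to restricting the range of one summation index; this produces the ``complicated closed formula'' of Corollary~\ref{cor:CoefSemiBaxter}, which I would take as the definition of a candidate sequence $\widetilde{SB}_n$.

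The heart of the proof is then showing that $\widetilde{SB}_n$ satisfies \eqref{recurrenceSB}. Write $\widetilde{SB}_n=\sum_{i,j} T(n,i,j)$ for the explicit hypergeometric-type summand obtained above (after eliminating one index using a Vandermonde/Chu convolution if possible, to reduce to a double or single sum). I would feed $T(n,i,j)$ to a creative-telescoping procedure (Zeilberger's algorithm, e.g. the \texttt{Maple} packages used in~\cite{MBM_Xin,zeilberger}, as the paper announces): this produces a linear recurrence operator $L$ in $n$ with polynomial coefficients, together with certificates exhibiting $L\cdot T$ as a sum of differences in the summation variables, so that summing over $i,j$ telescopes the right-hand side to $0$ (up to boundary terms). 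One checks that the boundary terms vanish because the binomials in $T$ are zero outside a finite window. This certifies a recurrence for $\widetilde{SB}_n$; it then remains to verify that the operator $L$ output by the algorithm is a (left) multiple of, or exactly equals, the order-two operator
\[
(n+4)(n+3)\,\widetilde{SB}_n-(11n^2+11n-6)\,\widetilde{SB}_{n-1}-(n-3)(n-2)\,\widetilde{SB}_{n-2},
\]
which is a finite polynomial computation (divide the operators). Finally, checking the initial values $\widetilde{SB}_0=0$, $\widetilde{SB}_1=1$ (directly from the closed formula, or from $S(1,1)=x+2x^2+\dots$) pins down $\widetilde{SB}_n=SB_n$ for all $n$, and since $SB_n$ then satisfies \eqref{recurrenceSB} with those initial conditions, the recursive characterization is complete.

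The main obstacle is the creative-telescoping step: the explicit formula coming out of Lagrange inversion is a multi-index sum, and getting a certified recurrence of manageable order requires first simplifying it as much as possible (collapsing sums by hypergeometric identities so that Zeilberger's algorithm terminates with an order-$2$, rather than a spuriously higher-order, operator), and then the somewhat delicate bookkeeping of boundary terms when the telescoped identity is summed over the restricted (non-negative-part) index range. A secondary, purely technical point is that the non-negative-part operator $\Omega_\ge$ and Lagrange inversion must be interchanged correctly; this is legitimate because $F(a,W)$ is, for each fixed $n$, a Laurent polynomial in $a$, so $[x^n]\Omega_\ge[F(a,W)]=\Omega_\ge[[x^n]F(a,W)]$, but it should be stated explicitly. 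Everything else — the Lagrange inversion bookkeeping, the final operator division, the initial-value check — is routine.
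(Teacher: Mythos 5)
Your proposal follows essentially the same route as the paper: Lagrange inversion applied to $F(a,W)$ (with $\phi(W)=\bar a(1+a)(W+1+a)(W+a)$) yields the closed formula of Corollary~\ref{cor:CoefSemiBaxter} as a single sum $\sum_j F_{SB}(n,j)$ of a hypergeometric summand, to which Zeilberger's algorithm is applied, and the recurrence follows after checking that the telescoped boundary terms and the out-of-range summands vanish. One minor simplification you could note: since only the coefficient of $a^0$ is needed, $\Omega_{\ge}$ plays no role at this stage and the extraction $[a^s x^{n-1}]W^i$ automatically collapses the Lagrange-inversion expansion to a single summation index, so no Vandermonde-type contraction is required.
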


From the recurrence of Proposition~\ref{prop:recSemiBaxter}, we can in turn prove closed formulas for semi-Baxter numbers, 
which have been conjectured in~\cite{BevanPrivate}. 
These are much simpler than the one given in Corollary~\ref{cor:CoefSemiBaxter} by the Lagrange inversion, 
and also very much alike the summation formula for Baxter numbers (which we recall in Subsection~\ref{sec:Bax_known}). 

\begin{thm}\label{thm:nice_formulas_SB_n}
For any $n\geq 2$, the number $SB_n$ of semi-Baxter permutations of size $n$ satisfies 
\begin{align*}
SB_n & = \frac{24}{(n-1) n^2 (n+1) (n+2)} \sum_{j=0}^n \binom{n}{j+2} \binom{n+2}{j} \binom{n+j+2}{j+1} \\ 
& = \frac{24}{(n-1) n^2 (n+1) (n+2)} \sum_{j=0}^n \binom{n}{j+2} \binom{n+1}{j} \binom{n+j+2}{j+3}\\
& = \frac{24}{(n-1) n^2 (n+1) (n+2)} \sum_{j=0}^n \binom{n+1}{j+3} \binom{n+2}{j+1} \binom{n+j+3}{j}.
\end{align*}
\end{thm}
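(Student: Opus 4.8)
\textbf{Proof proposal for Theorem~\ref{thm:nice_formulas_SB_n}.}

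The plan is to deduce the three closed formulas from the recurrence of Proposition~\ref{prop:recSemiBaxter} by the standard ``guess-and-check'' strategy made rigorous through creative telescoping, exactly in the spirit of~\cite{zeilberger} as announced in Subsection~\ref{sec:intro_semiBax}. Concretely, I would proceed as follows. First, fix one of the three candidate expressions, say
\[
T_n := \frac{24}{(n-1) n^2 (n+1) (n+2)} \sum_{j=0}^n \binom{n}{j+2} \binom{n+2}{j} \binom{n+j+2}{j+1},
\]
and set $t_{n,j} := \binom{n}{j+2}\binom{n+2}{j}\binom{n+j+2}{j+1}$, a proper hypergeometric term in $(n,j)$. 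Running Zeilberger's algorithm on $t_{n,j}$ produces a linear recurrence in $n$ with polynomial coefficients, of the form $\sum_{i=0}^{r} p_i(n)\, U_{n+i} = 0$, where $U_n := \sum_j t_{n,j}$, together with an explicit rational certificate $R(n,j)$ such that $\sum_{i=0}^{r} p_i(n)\, t_{n+i,j} = G(n,j+1) - G(n,j)$ with $G(n,j) = R(n,j)\, t_{n,j}$. Summing this telescoped identity over $j$ (the boundary terms vanish because $t_{n,j}=0$ outside a finite range, which one checks directly from the binomial factors) certifies the recurrence for $U_n$, hence for $T_n$ after multiplying through by the rational prefactor $\tfrac{24}{(n-1)n^2(n+1)(n+2)}$ and clearing denominators. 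One then verifies purely mechanically that the resulting recurrence for $T_n$ implies the second-order recurrence~\eqref{recurrenceSB} of Proposition~\ref{prop:recSemiBaxter} — either because Zeilberger directly returns an order-$2$ recurrence matching~\eqref{recurrenceSB}, or, if it returns a higher-order one, by checking that~\eqref{recurrenceSB} divides it (equivalently, that every solution of~\eqref{recurrenceSB} solves the Zeilberger recurrence, a finite polynomial computation). Finally, one checks the two initial values $T_1 = 1$ and $T_2 = 2$ by direct evaluation of the (finite) sums. Since~\eqref{recurrenceSB} together with $SB_1=1$, $SB_2=2$ determines $(SB_n)$ uniquely for $n\geq 1$, this gives $T_n = SB_n$ for all $n\geq 2$.

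For the remaining two formulas I would argue in the same way, applying creative telescoping to each of the summands $\binom{n}{j+2}\binom{n+1}{j}\binom{n+j+2}{j+3}$ and $\binom{n+1}{j+3}\binom{n+2}{j+1}\binom{n+j+3}{j}$ separately, obtaining in each case a recurrence that reduces to~\eqref{recurrenceSB}, and matching initial conditions. Alternatively — and perhaps more elegantly — once the first formula is established one can prove that the three sums are equal to one another directly: each pair of summands is a proper hypergeometric term, their ratio is a rational function of $(n,j)$, and a single application of Gosper's algorithm (or again Zeilberger, comparing the certified recurrences) shows the two sums agree for all $n$ after checking one base case. All of these computations are routine and are the ones recorded in the accompanying Maple worksheet; I would simply report the certificates $R(n,j)$ and the telescoped identities, leaving their (automatic) verification to the reader.

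The main obstacle is not conceptual but bookkeeping: Zeilberger's algorithm applied to these triple-binomial summands may return a recurrence of order larger than $2$, so one must carry out the extra step of showing that the second-order recurrence~\eqref{recurrenceSB} is a right factor of the operator produced by the algorithm. This amounts to verifying a polynomial identity of modest degree, but it is the place where care is needed — in particular one must be sure that the ``spurious'' solutions introduced by the higher-order operator are excluded by checking enough initial values. A secondary point requiring attention is the behaviour of the rational prefactor $\tfrac{24}{(n-1)n^2(n+1)(n+2)}$ at small $n$: the formulas are only claimed for $n\geq 2$, and one should note that the factor $n-1$ in the denominator is compensated by a zero of the sum at $n=1$, so that $T_1$ is still well-defined and equal to $1$; I would handle this by clearing denominators throughout and working with the polynomial-normalized recurrence, evaluating the genuinely exceptional cases ($n=1,2,3$, where factors $(n-1),(n-2),(n-3)$ appear) by hand.
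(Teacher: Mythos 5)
Your proposal is correct and follows essentially the same route as the paper: apply creative telescoping to each of the three hypergeometric sums, recover the recurrence of Proposition~\ref{prop:recSemiBaxter}, and check initial terms. Your extra caution about Zeilberger possibly returning a higher-order operator is sensible but turns out to be unnecessary here, since in all three cases the algorithm returns exactly the second-order recurrence~\eqref{recurrenceSB}.
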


There is actually a fourth formula that has been conjectured in~\cite{BevanPrivate}, namely 
\[
SB_n = \frac{24}{(n-1) n (n+1)^2 (n+2)} \sum_{j=0}^n \binom{n+1}{j} \binom{n+1}{j+3} \binom{n+j+2}{j+2}.
\]
Taking the multiplicative factors inside the sums, it is easy to see (for instance going back to the definition of binomial coefficients as quotients of factorials) 
that it is term by term equal to the second formula of Theorem~\ref{thm:nice_formulas_SB_n}. 

As indicated in Subsection~\ref{sec:intro_semiBax}, in addition to the formulas reported in Theorem~\ref{thm:nice_formulas_SB_n} above, 
two conjectural formulas for $SB_n$ have been proposed in the literature, in different contexts. 

The first one has been shown in Subsection~\ref{sec:inv} (eq.~\eqref{conjsav}). This formula was proved in~\cite{savage} and its validity for semi-Baxter numbers follows by Theorem~\ref{conjecture}. 
The second formula is attributed to M. Van Hoeij and reported by D. Bevan in~\cite{bevan}. 
This second conjecture is an explicit formula for semi-Baxter numbers that involves the numbers $a_n=\sum_{j=0}^n\binom{n}{j}^2\binom{n+j}{j}$ (sequence \text{A005258} on~\cite{OEIS}). 
We will prove in Subsection~\ref{sec:proofs_GF} the validity of this conjecture by using the recursive formula for semi-Baxter numbers (Proposition~\ref{prop:recSemiBaxter}).

\begin{proposition}[\cite{bevan}, Conjecture 13.2]\label{prop:conj}
For $n\geq2$,$$SB_n=\frac{24}{5}\,\frac{(5n^3-5n+6)a_{n+1}-(5n^2+15n+18)a_n}{(n-1)n^2(n+2)^2(n+3)^2(n+4)}$$
\end{proposition}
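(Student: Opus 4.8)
\textbf{Proof plan for Proposition~\ref{prop:conj}.}
The plan is to verify Van Hoeij's formula by the standard ``guess-and-prove'' strategy: we already possess, via Proposition~\ref{prop:recSemiBaxter}, a second-order linear recurrence with polynomial coefficients satisfied by the semi-Baxter numbers, so it suffices to show that the right-hand side of the claimed formula satisfies the \emph{same} recurrence and agrees on enough initial values. Concretely, write
\[
T_n \;=\; \frac{24}{5}\,\frac{(5n^3-5n+6)\,a_{n+1}-(5n^2+15n+18)\,a_n}{(n-1)\,n^2\,(n+2)^2\,(n+3)^2\,(n+4)},
\]
where $a_n=\sum_{j=0}^n\binom{n}{j}^2\binom{n+j}{j}$ is the Ap\'ery-like sequence A005258. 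The goal is to prove $T_n = SB_n$ for all $n\ge 2$; since both sides satisfy a recurrence expressing the $n$-th term as a rational-function combination of the previous two, it is enough to check that $T_n$ satisfies the recurrence of Proposition~\ref{prop:recSemiBaxter} and that $T_2 = SB_2 = 6$, $T_3 = SB_3 = 23$.

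First I would recall (or re-derive by creative telescoping / Zeilberger's algorithm applied to the summand $\binom{n}{j}^2\binom{n+j}{j}$) the classical three-term recurrence for the Ap\'ery numbers $a_n$, namely
\[
n^2\,a_n \;=\; (11n^2-11n+3)\,a_{n-1} \;+\; (n-1)^2\,a_{n-2},
\]
valid for $n\ge 2$, with $a_0=1$, $a_1=3$. This is the only nontrivial analytic input; everything else is polynomial algebra. Using this recurrence I can express $a_{n+1}$ and $a_{n+2}$ in terms of $a_n$ and $a_{n-1}$ (equivalently, reduce every shift of $a$ appearing in $T_{n-2}, T_{n-1}, T_n$ to the two-dimensional $\mathbb{Q}(n)$-module spanned by $a_n$ and $a_{n-1}$). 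Then the identity to be proved,
\[
(n+4)(n+3)\,T_n \;-\; (11n^2+11n-6)\,T_{n-1} \;-\; (n-3)(n-2)\,T_{n-2} \;=\; 0,
\]
becomes, after clearing denominators, a statement of the form $P(n)\,a_n + Q(n)\,a_{n-1} = 0$ with $P,Q \in \mathbb{Q}[n]$; since $a_n/a_{n-1}$ is irrational (or more simply, since $a_n, a_{n-1}$ are $\mathbb{Q}(n)$-linearly independent as the recurrence is irreducible), this forces $P \equiv Q \equiv 0$, which is a finite check on polynomial coefficients. I would carry this out symbolically (the accompanying Maple worksheet already records such computations), but the structure is: substitute the closed form, homogenize using the Ap\'ery recurrence, and confirm the two resulting polynomials vanish identically.

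The only genuine subtlety — and the step I expect to require the most care — is the bookkeeping with denominators and low-index boundary cases: the factor $(n-1)$ and the shifted factors in the denominators of $T_{n-1}$ and $T_{n-2}$ mean one must be slightly careful that the recurrence identity, after clearing denominators, is a polynomial identity valid for all $n$ (and in particular that no spurious cancellation hides a pole), and that the base cases chosen genuinely pin down the sequence. Checking $T_2=6$ and $T_3=23$ directly from $a_2=19$, $a_3=147$, $a_4=1251$ handles the initialization, and then Proposition~\ref{prop:recSemiBaxter} propagates the equality to all $n\ge 2$. Finally, I note that this is essentially the method announced in Subsection~\ref{sec:intro_semiBax}: once the polynomial recurrence for $SB_n$ is in hand, proving any conjectured holonomic closed form reduces to creative telescoping plus a finite verification, and Van Hoeij's formula is no exception.
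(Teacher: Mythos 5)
Your proposal is correct and takes essentially the same route as the paper: the published proof also establishes the formula by combining the recurrence of Proposition~\ref{prop:recSemiBaxter} with the three-term Ap\'ery recurrence $(n+1)^2 a_{n+1}=(11n^2+11n+3)a_n+n^2 a_{n-1}$ (phrased there as an induction on $n$, which amounts to the same computation as your homogenization to $P(n)a_n+Q(n)a_{n-1}=0$ followed by a finite polynomial check). One slip to correct before carrying it out: the base cases are $T_2=SB_2=2$ and $T_3=SB_3=6$, not $6$ and $23$ (those are $SB_3$ and $SB_4$); with $a_2=19$, $a_3=147$, $a_4=1251$ one indeed finds $(36\cdot147-68\cdot19)/2000=2$ and $(126\cdot1251-108\cdot147)/23625=6$.
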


\begin{remark}
With Corollary~\ref{cor:CoefSemiBaxter}, Theorem~\ref{thm:nice_formulas_SB_n} and Proposition~\ref{prop:conj}, 
we get five expressions for the $n$th semi-Baxter number as a sum over $j$. 
Note that although the sums are equal, the corresponding summands in each sum are not. 
Therefore, Corollary~\ref{cor:CoefSemiBaxter}, Theorem~\ref{thm:nice_formulas_SB_n} and Proposition~\ref{prop:conj} give five essentially different ways of expressing the semi-Baxter numbers.
Note however that we are not aware of any combinatorial interpretation of the summation index $j$, for any of them.
\end{remark}

From the formula of Proposition~\ref{prop:conj}, we can derive the dominant asymptotics of $SB_n$. 
\begin{corollary}\label{cor:asym_SB_n}
Let $\lambda = \frac{1}{2}(\sqrt{5}-1)$. It holds that 
\[SB_n \sim  A\,\frac{\mu^{n}}{n^{6}},\] 
where $A=\frac{12}{\pi}\,5^{-1/4} \lambda^{-15/2}\approx94.34$ and $\mu=\lambda^{-5}= (11+5\sqrt{5})/2$.
\end{corollary}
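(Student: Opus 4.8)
The plan is to substitute the asymptotics of the Ap\'ery-like numbers $a_n = \sum_{j=0}^{n}\binom{n}{j}^2\binom{n+j}{j}$ (OEIS sequence A005258) into the formula of Proposition~\ref{prop:conj}. The whole statement reduces to establishing
\[
a_n \sim \frac{C}{n}\,\mu^n , \qquad \mu = \frac{11+5\sqrt5}{2} = \lambda^{-5}, \qquad C = \frac{\lambda^{-5/2}}{2\pi\,5^{1/4}} .
\]
The exponential rate $\mu$ and the polynomial factor $n^{-1}$ are elementary. Recall the classical Ap\'ery recurrence $(n+1)^2\,a_{n+1} = (11n^2+11n+3)\,a_n + n^2\,a_{n-1}$; its characteristic equation $t^2-11t-1=0$ has roots $\mu$ and $-1/\mu$, and substituting the ansatz $a_n = \mu^n n^{\beta}$ and extracting the coefficient of $n$ in the resulting expansion forces $\beta = -1$. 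Since $|-1/\mu| < 1$, the corresponding solution is negligible relative to $\mu^n n^{-1}$, so $a_n = C\,\mu^n n^{-1}\big(1 + O(1/n)\big)$ for some positive connection constant $C$.

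Pinning down the value of $C$ in closed form is the step I expect to be the crux. One route is singularity analysis: the generating function $\sum_{n\ge 0}a_n x^n$ is D-finite, annihilated by an explicit second-order operator, and its dominant singularity sits at $x_0 = 1/\mu = \lambda^5$, where the indicial equation has a double root; hence $\sum_{n\ge 0}a_n x^n = -C\log(1 - x/x_0) + (\text{analytic at }x_0)$, and the transfer theorem yields $a_n \sim C\mu^n/n$ with $C$ the connection constant that matches this local logarithmic solution to the solution analytic at the origin. Alternatively, $C$ can be taken from the known asymptotics of A005258 in the literature, or obtained from a two-dimensional saddle-point analysis of a constant-term representation of $a_n$. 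Any of these gives $C = \lambda^{-5/2}/(2\pi\,5^{1/4})$; the honest evaluation of this connection constant (as opposed to a numerical one) is the only non-routine ingredient.

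Granting the asymptotics of $a_n$, the rest is bookkeeping. Writing the numerator in Proposition~\ref{prop:conj} as $(5n^3-5n+6)a_{n+1} - (5n^2+15n+18)a_n = 5n^3 a_{n+1} + R_n$, one has $5n^3 a_{n+1} \sim 5C\mu^{n+1}n^2$ while $R_n = O(\mu^n n)$, so the numerator equals $5C\mu^{n+1}n^2\big(1 + O(1/n)\big)$; the denominator $(n-1)n^2(n+2)^2(n+3)^2(n+4)$ equals $n^8\big(1 + O(1/n)\big)$. Therefore
\[
SB_n \;\sim\; \frac{24}{5}\cdot\frac{5C\mu^{n+1}n^2}{n^8} \;=\; 24\,C\,\mu\;\frac{\mu^n}{n^6}.
\]
Finally, $\mu = \lambda^{-5}$ and $C = \lambda^{-5/2}/(2\pi\,5^{1/4})$ give $24\,C\,\mu = \frac{12}{\pi}\,5^{-1/4}\lambda^{-15/2} = A$, which is the constant in the statement (numerically $A\approx 94.34$), so $SB_n\sim A\,\mu^n/n^6$ as claimed.
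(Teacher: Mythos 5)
Your proposal is correct and follows essentially the same route as the paper: both reduce the statement to the asymptotics $a_n \sim C\,\mu^n/n$ with $C=\lambda^{-5/2}/(2\pi\,5^{1/4})$ (which agrees with the paper's $\mu^{1/2}/(2\pi\lambda\sqrt{\nu})$ after simplification) and then carry out the same bookkeeping in the formula of Proposition~\ref{prop:conj}. The only difference is in how the asymptotics of $a_n$ is justified: the paper invokes McIntosh's asymptotic formula for binomial sums in one stroke, whereas you derive the exponential rate and the $n^{-1}$ factor from the Ap\'ery recurrence and defer the connection constant to singularity analysis or the literature, which amounts to the same external input.
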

\subsection{Enumerative results: proofs}\label{sec:proofs_GF}

Recall that $S(y,z)$ denotes the multivariate generating function of semi-Baxter permutations. 
In Theorem~\ref{thm:GFSemiBaxter}, we have given an expression for $S(1+a,1+a)$, which we now prove. 

\begin{proof}[Proof of Theorem~\ref{thm:GFSemiBaxter}] 
The linear functional equation of eq.~\eqref{funeq1} has two catalytic variables, $y$ and $z$. 
 To solve eq.~\eqref{funeq1} it is convenient to set $y=1+a$ and collect all the terms having $S(1+a,z)$ in them, obtaining the {\em kernel form} of the equation:
\begin{equation}
\label{eq1az}
K(a,z) S(1+a,z)= xz(1+a)-\frac{xz(1+a)}{a}  S(1,z) - \frac{xz(1+a)}{z-1-a} S(1+a,1+a)  ,
\end{equation} 
where the kernel is 
\[K(a,z)=1-\frac{xz(1+a)}{a}- \frac{xz(1+a)}{z-1-a}.\] 
For brevity, we refer to the right-hand side of eq.~\eqref{eq1az} as $R(x,a,z,S(1,z),S(1+a,1+a))$.

The kernel is quadratic in $z$. Denoting $Z_{+}(a)$ and $Z_{\_}(a)$ the solutions of $K(a,z)=0$ with respect to $z$, and $Q=\sqrt{a^2-2ax-6a^2x+x^2+2ax^2+a^2x^2-4a^3x}$, we have 
\begin{align*}
Z_{+}(a) & =\frac{1}{2}\frac{a+x+ax-Q}{x(1+a)}=(1+a)+(1+a)^2x+\frac{(1+a)^3(1+2a)}{a}x^2+O(x^3),\\
Z_{\_}(a) & =\frac{1}{2}\frac{a+x+ax+Q}{x(1+a)}=\frac{a}{(1+a)x}-a-(1+a)^2x-\frac{(1+a)^3(1+2a)}{a}x^2+O(x^3).
\end{align*}

Both $Z_{+}$ and $Z_{\_}$ are Laurent series in $x$ whose coefficients are Laurent polynomials in $a$.
However, only the kernel root $Z_{+}$ is a formal power 
series in $x$ whose coefficients are Laurent polynomials in $a$. 
So, setting $z=Z_{+}$, the function 
$S(1+a,z)$ is a formal power series in $x$ whose coefficients are Laurent polynomials in $a$, and the right-hand side of eq.~\eqref{eq1az} is equal to zero, \emph{i.e.} 
$R(x,a,Z_{+},S(1,Z_{+}),S(1+a,1+a))=0$. 
Note in addition that the coefficients of $Z_{+}$ are multiples of $(1+a)$.

At this point we follow the usual kernel method (see for instance~\cite{BM}) and attempt to eliminate the term $S(1,Z_+)$ by 
exploiting transformations that leave the kernel, $K(a,z)$, unchanged. Examining the kernel shows that the 
transformations $$\Phi:(a,z)\rightarrow\left(\frac{z-1-a}{1+a},z\right)\;\;\;\mbox{ and 
}\;\;\;\Psi:(a,z)\rightarrow\left(a,\frac{z+za-1-a}{z-1-a}\right)$$ leave the kernel unchanged and generate a group 
of order $10$.

Among all the elements of this group we consider the following pairs $(f_1(a,z),f_2(a,z))$: 
$$\left[a,z\right]\mathop{\longleftrightarrow}_{\Phi}\left[\frac{z- 1- a}{1+a},z\right]\mathop{\longleftrightarrow}_{\Psi}\left[\frac{z- 1- a}{1+a},\frac{z-1}{a}\right]\mathop{\longleftrightarrow}_{\Phi}\left[\frac{z- 1- a}{az},\frac{z-1}{a}\right]\mathop{\longleftrightarrow}_{\Psi}
\left[\frac{z- 1- a}{az},\frac{1+a}{a}\right].$$
These have been chosen since, for each of them, $f_1(a,Z_{+})$ and $f_2(a,Z_{+})$ are formal power series in $x$ with Laurent polynomial coefficients in $a$. 
Consequently, they share the property that $S(1+f_1(a,Z_{+}),f_2(a,Z_{+}))$ are formal power series in $x$. 
It follows that, substituting each of these pairs for $(a,z)$ in eq.~\eqref{eq1az}, 
we obtain a system of five equations, whose left-hand sides are all $0$, and with six unknowns: 

\[
\begin{cases}
 0=R(x,a,Z_{+},S(1,Z_{+}),S(1+a,1+a))\\
 \\
 0=R\left(x,\frac{Z_{+}- 1- a}{1+a},Z_{+},S(1,Z_{+}),S(1+\frac{Z_{+}- 1- a}{1+a},1+\frac{Z_{+}- 1- a}{1+a})\right)\\
 \\
   0=R\left(x,\frac{Z_{+}-1}{a},\frac{Z_{+}- 1- a}{1+a},S(1,\frac{Z_{+}-1}{a}),S(1+\frac{Z_{+}- 1- a}{1+a},1+\frac{Z_{+}- 1- a}{1+a})\right) \\
 \\
   0=R\left(x,\frac{Z_{+}- 1- a}{aZ_{+}},\frac{Z_{+}-1}{a},S(1,\frac{Z_{+}-1}{a}),S(1+\frac{Z_{+}- 1- a}{aZ_{+}},1+\frac{Z_{+}- 1- a}{aZ_{+}})\right)\\
\\
   0=R\left(x,\frac{Z_{+}- 1- a}{aZ_{+}},\frac{1+a}{a},S(1,\frac{1+a}{a}),S(1+\frac{Z_{+}- 1- a}{aZ_{+}},1+\frac{Z_{+}- 1- a}{aZ_{+}})\right).
\end{cases}
\]

Eliminating all unknowns except $S(1+a, 1+a)$ and $S(1,1+\bar{a})$, this system reduces (after some work) to the following equation:
\begin{equation}
\label{final}
S(1+a, 1+a)+\frac{(1+a)^2x}{a^4}S\left(1,1+\bar{a}\right)+P(a,Z_{+})=0, 
\end{equation}
where $P(a,z)=(-z+1+a)(-za^4+z^2a^4-za^3+z^2a^3-z^3a^2-2a^2+z^2a^2+za^2-4a+5az-3az^2+z^3a+3z-z^2-2)/({z}{a}^4(z-1))$.
Note that the coefficient of $S(1,1+\bar{a})$ in eq.~\eqref{final} results to be equal to $(1+a)^2x\bar{a}^4$ only after setting $z=Z_+$ and simplifying the expression obtained.

Now, the form of eq.~\eqref{final} allows us to separate its terms according to the power of $a$: 
\begin{itemize}
 \item $S(1+a,1+a)$ is a power series in $x$ with polynomial coefficients in $a$ whose lowest power of $a$ is $0$, 
 \item $S(1,1+\bar{a})$ is a power series in $x$ with polynomial coefficients in $\bar{a}$ whose highest power of $a$ is 
$0$; consequently, 
we obtain that $\frac{(1+a)^2x}{a^4}S(1,1+\bar{a})$ is a power series 
in $x$ with polynomial coefficients in $\bar{a}$ whose highest power of $a$ is $-2$.
\end{itemize}
Hence when we expand the series $-P(a,Z_+)$ as a power series in $x$, the non-negative powers of $a$ in the 
coefficients must be equal to those of $S(1+a,1+a)$, while the negative powers of $a$ come from 
$\frac{(1+a)^2x}{a^4}S(1,1+\bar{a})$.

Then, in order to have a better expression for $P(a,z)$, we perform a further substitution setting $z=w+1+a$. 
More precisely, let $W\equiv W(x;a)$ be the power series in $x$ defined by $W=Z_{+}-(1+a)$. 
Since $K(a,W+1+a)=0$, the function $W$ is recursively defined by 
\begin{equation}
\label{Wexpr}
W=x\bar{a}(1+a)(W+1+a)(W+a),
\end{equation}
as claimed. 
Moreover, we have the following expression for $F(a,W) := -P(a,Z_{+})$:
\begin{equation*}
\begin{array}{ll}F(a,W)  = - P(a,W+1+a)=&\displaystyle\,(1+a)^2\,x+\left(\frac{1}{a^5}+\frac{1}{a^4}+2+2a\right)\,x\,W\\[2ex]
&\displaystyle+\left(-\frac{1}{a^5}-\frac{1}{a^4}+\frac{1}{a^3}-\frac{1}{a^2}-\frac{1}{a}+1\right)\,x\,W^2\\[2ex]
&\displaystyle+\left(\frac{1}{a^4}-\frac{1}{a^2}\right)\,x\,W^3,
\end{array}
\end{equation*}
in which the denominator of $- P(a,W+1+a)$ is eliminated by substituting in it a factor $W$ for the right-hand side of eq.~\eqref{Wexpr}.
\end{proof}

From the expression of $S(1+a,1+a)$ obtained above, the Lagrange inversion allows us to derive an explicit expression for the semi-Baxter numbers, as shown below in Corollary~\ref{cor:CoefSemiBaxter}. 
From it, we next obtain the simple recurrence of Proposition~\ref{prop:recSemiBaxter}, the conjectured simpler formulas for $SB_n$ given in Theorem~\ref{thm:nice_formulas_SB_n},
and the asymptotic estimate of $SB_n$ stated in Corollary~\ref{cor:asym_SB_n}.

\begin{corollary}\label{cor:CoefSemiBaxter}
The number $SB_n$ of semi-Baxter permutations of size $n$ satisfies, for all $n\geq2$:
\begin{eqnarray*}
{SB}_n&\hspace{-3mm}=\frac{1}{n-1}\sum_{j=0}^{n}\binom{n-1}{j}\biggr[\binom{n-1}{j+1}\left[\binom{n+j+1}{j+5}+2\binom{n+j+1}{j}\right]+2\binom{n-1}{j+2}\left[-\binom{n+j+2}{j+5}+\binom{n+j+1}{j+3}\right.\\[-.5ex]
&\left.\qquad\quad\quad-\binom{n+j+2}{j+2}+\binom{n+j+1}{j}\right]+3\binom{n-1}{j+3}\left[\binom{n+j+2}{j+4}-\binom{n+j+2}{j+2}\right]\biggr].
\end{eqnarray*}
\end{corollary}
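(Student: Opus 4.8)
\textbf{Proof proposal for Corollary~\ref{cor:CoefSemiBaxter}.}

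The plan is to extract the coefficients of $S(1,1)=\sum_n SB_n x^n$ directly from the expression $S(1+a,1+a)=\Omega_{\geq}[F(a,W)]$ obtained in Theorem~\ref{thm:GFSemiBaxter}, by applying the Lagrange inversion formula to the algebraic series $W\equiv W(x;a)$ defined by $W=x\,\phi(a,W)$ with $\phi(a,W)=\bar{a}(1+a)(W+1+a)(W+a)$. First I would note that $SB_n$ is obtained from $S(1+a,1+a)$ by the specialization $a\to 0$ in the sense of formal manipulation, but the cleaner route (and the one that matches the factor $\frac{1}{n-1}$ appearing in the statement) is this: since $F(a,W)=x\cdot\big(c_0(a)+c_1(a)W+c_2(a)W^2+c_3(a)W^3\big)$ where $c_0(a)=(1+a)^2$, $c_1(a)=\bar a^5+\bar a^4+2+2a$, $c_2(a)=-\bar a^5-\bar a^4+\bar a^3-\bar a^2-\bar a+1$, $c_3(a)=\bar a^4-\bar a^2$, I would write $[x^n]\,S(1+a,1+a)=\Omega_{\geq}\big[[x^{n-1}]\big(c_0(a)+c_1(a)W+c_2(a)W^2+c_3(a)W^3\big)\big]$, and then evaluate $[x^{m}]W^r$ for $r=0,1,2,3$ using Lagrange inversion.

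The key computational step is the Lagrange inversion identity $[x^{m}]\,W(x;a)^r=\frac{r}{m}[W^{m-r}]\,\phi(a,W)^{m}=\frac{r}{m}[W^{m-r}]\,\bar a^{m}(1+a)^{m}(W+1+a)^{m}(W+a)^{m}$ for $m\geq 1$ (with the obvious $r=0$ case contributing only the $c_0$ term at $n=2$, i.e.\ the constant $SB$-term, which one checks separately). Expanding $(W+1+a)^m=\sum_{p}\binom{m}{p}W^p(1+a)^{m-p}$ and $(W+a)^m=\sum_{q}\binom{m}{q}W^q a^{m-q}$ and extracting the coefficient of $W^{m-r}$ forces $p+q=m-r$, so one gets a single sum; after further expanding $(1+a)^{2m-p}=\sum_t\binom{2m-p}{t}a^t$ one obtains a Laurent polynomial in $a$ whose non-negative part is then selected by $\Omega_{\geq}$. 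Multiplying by $c_1,c_2,c_3$ (which carry the powers $\bar a^5,\bar a^4,\bar a^3,\bar a^2,\bar a,1,a$) shifts the index of summation, and collecting everything with $m=n-1$ and reindexing by $j$ (the exponent of $W$, equivalently a shifted version of $p$ or $q$) should produce exactly the three bracketed groups of binomials in the statement, with the overall $\frac{1}{n-1}$ coming from the $\frac{r}{m}=\frac{r}{n-1}$ Lagrange factor (the coefficients $1,2,3$ in front of $\binom{n-1}{j+1},\binom{n-1}{j+2},\binom{n-1}{j+3}$ being precisely the values $r=1,2,3$).

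The main obstacle I anticipate is purely bookkeeping: keeping track of the interaction between the $\Omega_{\geq}$ (non-negative part) operator and the negative powers of $a$ contributed by $c_1,c_2,c_3$, so that the final answer is expressed with the ``clean'' binomial coefficients $\binom{n+j+1}{j+5}$, $\binom{n+j+2}{j+5}$, etc.\ rather than a mess of shifted indices. Concretely, each term $c_i(a)a^{-\ell}W^i$ contributes, via the $a^t$ sum above, binomials of the form $\binom{2m-p}{t}$ with the constraint $t\geq \ell$ imposed by $\Omega_{\geq}$; the Vandermonde-type reindexing needed to fold the $t$-sum into the displayed single-index-$j$ form is where care is required, and it is also where one must double-check that the contributions claimed to cancel (the negative-$a$ parts matching $\frac{(1+a)^2x}{a^4}S(1,1+\bar a)$ in eq.~\eqref{final}) indeed drop out. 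Since the statement is an identity between explicit finite sums, once the Lagrange-inversion expansion is written down the verification reduces to a finite check that can be confirmed symbolically (and is recorded in the accompanying Maple worksheet); I would present the derivation in enough detail to make the reindexing transparent and refer to the worksheet for the routine simplifications.
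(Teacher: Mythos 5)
Your proposal is correct and follows essentially the same route as the paper: the paper likewise extracts $SB_n=[a^0x^n]F(a,W)$ and applies Lagrange inversion to $W=x\bar a(1+a)(W+1+a)(W+a)$ to get $[a^sx^k]W^i=\frac{i}{k}\sum_{j}\binom{k}{j}\binom{k}{j+i}\binom{k+j+i}{j+s}$, which after substitution yields the stated formula. The only simplification you are missing is that since only the coefficient of $a^0$ is needed, the operator $\Omega_{\geq}$ can be ignored entirely (it does not affect the $a^0$ coefficient), so no cancellation bookkeeping against $\frac{(1+a)^2x}{a^4}S(1,1+\bar a)$ is required.
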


\begin{proof}
The $n$th semi-Baxter number, $SB_n$, is the coefficient of $x^n$ in $S(1,1)$, which we denote as usual $[x^n]S(1,1)$. 
Notice that this number is also the coefficient $[a^0x^n]S(1+a,1+a)$, and so by Theorem~\ref{thm:GFSemiBaxter} it is the coefficient of $a^0x^n$ in $F(a,W) = - P(a,W+1+a)$,  
namely 
\[\begin{array}{ll}SB_n=\displaystyle[a^0x^{n-1}]&\hspace{-3mm}\displaystyle\left((1+a)^2+\left(\frac{1}{a^5}+\frac{1}{a^4}+2+2a\right)\,W+\left(-\frac{1}{
a^5}-\frac{1}{a^4}+\frac{1}{a^3}-\frac{1}{a^2}-\frac{1}{a}+1\right)\,W^2\right.\\[3ex]
&\hspace{-3mm}\displaystyle+\left.\left(\frac{1}{a^4}-\frac{1}{a^2}\right)\,W^3\right).\end{array}\] This expression can be evaluated from $[a^sx^k]W^i$, for $i=1,2,3$. Precisely,
\[\begin{array}{ll}
SB_n=&\hspace{-3mm}[a^5x^{n-1}]W+[a^4x^{n-1}]W+2[a^0x^{n-1}]W+2[a^{-1}x^{n-1}]W-[a^5x^{n-1}]W^2-[a^4x^{n-1}]W^2\\
\\
&\hspace{-3mm}+[a^3x^{n-1}]W^2-[a^2x^{n-1}]W^2-[a^1x^{n-1}]W^2+\left[a^0x^{n-1}\right]W^2+[a^4x^{n-1}]W^3-[a^2x^{n-1}]W^3.\end{array}\]
The Lagrange inversion and eq.~\eqref{Wexpr} then prove that
$$[a^sx^k]W^i=\frac{i}{k}\sum_{j=0}^{k-i}\binom{k}{j}\binom{k}{j+i}\binom{k+j+i}{j+s},\mbox{ for }i=1,2,3.$$
We can then substitute this into the above expression for $SB_n$ and, for $n\geq2$, obtain the announced explicit formula for the semi-Baxter coefficients $SB_n$ setting $SB_n=\sum_{j=0}^{n-1} 
F_{SB}(n,j)$, where
\belowdisplayskip=-10pt
\begin{eqnarray}
\notag
F_{SB}(n,j)\hspace{-6mm}&&\displaystyle=\frac{1}{n-1}\binom{n-1}{j}\Biggr[\binom{n-1}{j+1}\left[\binom{n+j+1}{j+5}+2\binom{n+j+1}{j}\right]\\[1ex]\label{eq:summand}
&&\quad\displaystyle +2\,\binom{n-1}{j+2}\left[-\binom{n+j+2}{j+5}+\binom{n+j+1}{j+3}-\binom{n+j+2}{j+2}+\binom{n+j+1}{j}\right]\\[1ex]\notag
&&\quad\displaystyle+3\,\binom{n-1}{j+3}\left[\binom{n+j+2}{j+4}-\binom{n+j+2}{j+2}\right]\Biggr].
\end{eqnarray}
\end{proof}

\begin{proof}[Proof of Proposition~\ref{prop:recSemiBaxter}]
From Corollary~\ref{cor:CoefSemiBaxter}, we can write $SB_n=\sum_{j=0}^{n-1} F_{SB}(n,j)$, where the summand 
$F_{SB}(n,j)$ given by eq.~\eqref{eq:summand} is hypergeometric, and we prove the announced recurrence using \emph{creative telescoping}~\cite{zeilberger}. The Maple 
package {\tt SumTools[Hypergeometric][Zeilberger]} implements this approach: using $F_{SB}(n,j)$ as input, it yields
\begin{multline}\label{zeil1}
(n+5)(n+6)\cdot F_{SB}(n+2,j)
- (11n^2+55n+60)\cdot F_{SB}(n+1,j)
- n(n-1)\cdot F_{SB}(n,j)\\
= G_{SB}(n, j + 1)-G_{SB}(n, j),
\end{multline}
where $G_{SB}(n,j)$ is known as the certificate. It has the additional property that $G_{SB}(n,j) / F_{SB}(n,j)$ is a 
rational function of $n$ and $j$. The expression $G_{SB}(n,j)$ is quite cumbersome and we do not report it here --- it 
can be readily reconstructed using \texttt{Zeilberger} as done in the Maple worksheet associated with our paper. 

To complete the proof of the recurrence it is sufficient to sum both sides of eq.~\eqref{zeil1} over $j$, $j$ ranging from $0$ to $n+1$. 
Since the coefficients on the left-hand side of eq.~\eqref{zeil1} are independent of $j$, summing it over $j$ gives
\begin{multline}
(n+5)(n+6) \cdot SB_{n+2}
- (11n^2+55n+60)\cdot SB_{n+1}
- n(n-1)\cdot SB_n \\
- (11n^2+55n+60) \cdot F_{SB}(n+1,n+1)
- n(n-1)\cdot (F_{SB}(n,n)+F_{SB}(n,n+1)).
\end{multline}
Summing the right-hand side over $j$ gives a telescoping series, and simplifies as $G_{SB}(n, n + 2)-G_{SB}(n, 0)$.
From the explicit expression of $F_{SB}(n,j)$ and $G_{SB}(n,j)$, it is elementary to check that 
\[
 F_{SB}(n+1,n+1) = F_{SB}(n,n) = F_{SB}(n,n+1) = G_{SB}(n, n + 2) = G_{SB}(n, 0) =0.
\]
Summing eq.~\eqref{zeil1} therefore gives 
\[
 (n+5)(n+6) \cdot SB_{n+2}
- (11n^2+55n+60)\cdot SB_{n+1}
- n(n-1)\cdot SB_n =0.
\]
Shifting $n \mapsto n-2$ and rearranging finally gives the recurrence of Proposition~\ref{prop:recSemiBaxter}.
\end{proof}

\begin{proof}[Proof of Theorem~\ref{thm:nice_formulas_SB_n}]
For each of the summation formulas given in Theorem~\ref{thm:nice_formulas_SB_n}, we apply the method of creative telescoping, as in the proof of Proposition~\ref{prop:recSemiBaxter}.
In all three cases, this produces a recurrence satisfied by these numbers, and every time we find exactly the recurrence given in Proposition~\ref{prop:recSemiBaxter}. 
Checking that the initial terms of the sequences coincide completes the proof. 
\end{proof}

\begin{proof}[Proof of Proposition~\ref{prop:conj}]
For the sake of brevity we write $A(n)=5n^3-5n+6$ and $B(n)=5n^2+15n+18$ so that the statement becomes \begin{equation}\label{pr:conj}
SB_n=\frac{24(A(n)\,a_{n+1}-B(n)\,a_n)}{5(n-1)n^2(n+2)^2(n+3)^2(n+4)}.
\end{equation}
The validity of eq.~\eqref{pr:conj} is proved by induction on $n$ using Proposition~\ref{prop:recSemiBaxter} and the following recurrence satisfied by the numbers $a_n$, for $n\geq1$: 
\begin{equation}\label{rec:aperi}
a_{n+1}=\frac{11n^2+11n+3}{(n+1)^2}\,a_n+\frac{n^2}{(n+1)^2}\,a_{n-1}, \mbox{ with }a_0=1,\mbox{ and }a_1=3.
\end{equation}

For $n=2,3$, it holds that $SB_2=({A(2)a_3-B(2)a_2})/{2000}=({36\cdot147-68\cdot19})/{2000}=2$ and $SB_3=({A(3)a_4-B(3)a_3})/{23625}=({126\cdot1251-108\cdot147})/{23625}=6$. 

Then, suppose that eq.~\eqref{pr:conj} is valid for $n-1$ and $n-2$. 
In order to prove it for $n$, consider the recursive formula of eq.~\eqref{recurrenceSB} and substitute in it $SB_{n-1}$ and $SB_{n-2}$ by using eq.~\eqref{pr:conj}. 
Now, after some work of manipulation and by using eq.~\eqref{rec:aperi} we can write $SB_n$ as in eq.~\eqref{pr:conj}.
\end{proof}

\begin{proof}[Proof of Corollary~\ref{cor:asym_SB_n}]
Applying the main theorem of~\cite{McIntosh}, it follows immediately that 
\[
 a_n \sim \frac{\mu^{n+1/2}}{2 \pi \lambda n \sqrt{\nu}} \textrm{ \quad for } \lambda = \frac{\sqrt{5}-1}{2}, \mu= \frac{11+5\sqrt{5}}{2} \textrm{ and } \nu=\frac{2\sqrt{5}}{3-\sqrt{5}}.
\]
Plugging this expression in the relation 
\[
SB_n=\frac{24}{5(n-1)n^2(n+2)^2(n+3)^2(n+4)}\,\big((5n^3-5n+6)a_{n+1}-(5n^2+15n+18)a_n\big),   
\]
we see that only the first of these two terms contributes to the asymptotic behavior of $SB_n$, 
and more precisely that 
\[
SB_n \sim A \mu^ n n^{-6} \textrm{ \quad for } \mu \textrm{ as above and } A=\frac{24 \mu^{3/2}}{2 \pi \lambda \sqrt{\nu}}.
\]
The claimed statement then follows noticing that $\mu=\lambda^{-5}$ and $\nu = \frac{\sqrt{5}}{\lambda^2}$.
\end{proof}

\section{Baxter numbers}\label{sec:Bax}

This section starts with an overview of some known results about Baxter numbers. 
We believe it helps understanding the relations, similarities and differences between this well-known sequence and the two main sequences studied in our work 
(semi-Baxter numbers in Section~\ref{sec:semi} and strong-Baxter numbers in Section~\ref{sec:strong}). 
Next, studying two families of restricted semi-Baxter permutations enumerated by Baxter numbers, 
we show that $\Omega_{semi}$ generalizes two known succession rules for Baxter numbers.

\subsection{Baxter numbers and restricted permutations}\label{sec:Bax_known}

\emph{Baxter permutations} (see~\cite{Gire} among others) are usually defined as permutations avoiding the two vincular patterns $\commonpat$ and $\baxpat$. 
Denoting $B_n$ the number of Baxter permutations of size $n$, the sequence $(B_n)$ is known as the sequence of \emph{Baxter numbers}. 
It is identified as sequence \text{A001181} in~\cite{OEIS} and its first terms are $1,2,6,22,92,422,2074,10754, 58202, 326240, 1882960, 11140560,\dots$. 
Since~\cite{CGHK78}, an explicit formula for $B_n$ has been known: 
\begin{equation*}
\textrm{for all } n\geq1,\ B_{n}=\frac{2}{n(n+1)^2}\sum_{j=1}^{n}\binom{n+1}{j-1}\binom{n+1}{j}\binom{n+1}{j+1}. 
\end{equation*}
In~\cite{BM}, M. Bousquet-M\'elou investigates further properties of Baxter numbers. 
The above formula can also be found in~\cite[Theorem 1]{BM}. 
Moreover, using the succession rule reviewed in Proposition~\ref{prop:growthBaxterPerm} below, 
\cite{BM} characterizes the generating function of Baxter numbers as the solution of a bivariate functional equation. 
It is then solved with the obstinate kernel method, implying that the generating function for Baxter numbers is D-finite~\cite[Theorem 4]{BM}. 
Although technical details differ, it is the same approach than the one we used in Section~\ref{sec:semi}. 
In the light of our recurrence for semi-Baxter numbers (see Proposition~\ref{prop:recSemiBaxter}), 
it is also interesting to note that Baxter numbers satisfy a similar recurrence, reported by R. L. Ollerton in~\cite{OEIS}, namely 
\begin{equation*}B_0=0, \quad B_1=1, \quad \text{ and for } n\geq2, B_n=\frac{7n^2+7n-2}{(n+3)(n+2)}B_{n-1}+\frac{8(n-2)(n-1)}{(n+3)(n+2)}B_{n-2}.\end{equation*}

In addition to Baxter permutations, several combinatorial families are enumerated by Baxter numbers. 
See for instance~\cite{ffno} which collects some of them and provides links between them. 
We will be specifically interested in a second family of restricted permutations which is also enumerated by Baxter numbers, 
namely the \emph{twisted Baxter permutations}, defined by the avoidance of $\commonpat$ and $\twistedpat$~\cite{Rea05,West}. 

\subsection{Succession rules for Baxter and twisted Baxter permutations}

It is clear from their definition in terms of pattern-avoidance that the families of Baxter and twisted Baxter permutations are 
subsets of the family of semi-Baxter permutations. 
Therefore, the growth of semi-Baxter permutations provided in Subsection~\ref{sec:GenTreeSemiBax} can be restricted to each of these families, 
producing a succession rule for Baxter numbers. 
In the following, we present these two restrictions, which happen to be (variants of) well-known succession rules for Baxter numbers. 
This reinforces our conviction that the generalization of Baxter numbers to semi-Baxter numbers is natural. 

\medskip

Let us first consider Baxter permutations. 
To that effect, recall that a LTR (left-to-right) maximum of a permutation $\pi$ is an element $\pi_i$ such that $\pi_i > \pi_j$ for all $j<i$. 
Similarly, a RTL maximum (resp. RTL minimum) of $\pi$ is an element $\pi_i$ such that $\pi_i > \pi_j$ (resp. $\pi_i < \pi_j$) for all $j>i$. 
Following~\cite[Section 2.1]{BM} we can make Baxter permutations grow by adding new maximal elements to them, 
which may be inserted either immediately before a LTR maximum or immediately after a RTL maximum. 
Giving to any Baxter permutation the label $(h,k)$ where $h$ (resp. $k$) is the number of its RTL (resp. LTR) maxima, 
this gives the most classical succession rule for Baxter numbers. 
\begin{proposition}[\cite{BM}, Lemma 2] \label{prop:growthBaxterPerm}
The growth of Baxter permutations by insertion of a maximal element is encoded by the rule 
$$\Omega_{Bax}=\left\{\begin{array}{ll}
(1,1)\\
(h,k) \rightsquigarrow \hspace{-3mm}& (1,k+1), \dots , (h,k+1)\\
& (h+1,1), \dots , (h+1,k), \end{array}\right.$$
where $h$ (resp. $k$) is the number of RTL (resp. LTR) maxima.
\end{proposition}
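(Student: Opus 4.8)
The plan is to reprove Proposition~\ref{prop:growthBaxterPerm} by the same kind of combinatorial site analysis used for $\Omega_{semi}$, but now tracking the \emph{maxima} of a Baxter permutation rather than the active insertion sites. First I would record the basic closure fact: removing the largest element of a Baxter permutation leaves a Baxter permutation, so Baxter permutations can be grown by inserting a new maximal element; and conversely, the only way to insert a maximal element into $\pi$ of size $n$ without creating an occurrence of $\commonpat$ or $\baxpat$ is immediately before a left-to-right (LTR) maximum or immediately after a right-to-left (RTL) maximum. This is the heart of the argument and the step I expect to be the main obstacle: one must check carefully that inserting the new maximum in any other slot forces either a $\commonpat$ (some value sits below a later higher value which is below the even later new top, with the descent adjacent) or a $\baxpat$, while insertion next to an LTR or RTL maximum is always safe. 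I would argue this by looking at the diagram: if the new top $n+1$ is placed at position $p$, then positions to its left that are not LTR maxima are ``dominated'' on the left, and positions to its right that are not RTL maxima are ``dominated'' on the right, and in either case one extracts a forbidden adjacent pair together with a witnessing element.

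Next I would set up the labels. To a Baxter permutation $\pi$ assign $(h,k)$ with $h$ the number of RTL maxima and $k$ the number of LTR maxima. The permutation $1$ has $h=k=1$, matching the root $(1,1)$ of $\Omega_{Bax}$. The sites available for inserting the new maximum are exactly the $h+k$ slots described above (note $\pi_n$ is simultaneously a RTL maximum and, if $n=1$, also an LTR maximum, but in general the top element of the whole permutation is counted once on each side only when it is an endpoint — one must be mildly careful about the double-counting of the global maximum, which is why the production has $h+k$ children and not $h+k+1$; I would remark that inserting before the LTR maximum that equals the global maximum and inserting after the RTL maximum that equals it describe the same slot, so there are exactly $h+k$ distinct insertion positions).

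Then I would do the case analysis on where the new maximum $m=n+1$ is inserted, tracking how the counts of LTR and RTL maxima change. Inserting $m$ immediately before the $i$-th LTR maximum from the left: all LTR maxima strictly before that point survive, $m$ itself becomes a new LTR maximum, and the later LTR maxima are no longer LTR maxima (they are now below $m$), so the new permutation has $i$ LTR maxima; meanwhile $m$ is the unique RTL maximum of the new permutation that lies to the left of everything after it\ldots more precisely $m$ kills all RTL maxima to its left and becomes an RTL maximum only if nothing bigger follows, which cannot happen, so $m$ is an RTL maximum and the RTL maxima to its right are unaffected, giving $h+1$ RTL maxima if $i<k$ (when $m$ is strictly interior) — and one checks this yields the labels $(h+1,1),\dots,(h+1,k)$ as $i$ ranges appropriately. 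Symmetrically, inserting $m$ immediately after the $j$-th RTL maximum from the right produces $k+1$ LTR maxima and $j$ RTL maxima, giving $(1,k+1),\dots,(h,k+1)$. Collecting the two families of produced labels gives exactly the production of $\Omega_{Bax}$, which completes the proof. I would note that this is deliberately parallel to the proof of Proposition~\ref{prop:semi_rule}, and the only genuinely new content is the characterization of legal maximal-element insertions for Baxter permutations, for which I would cite~\cite[Section 2.1]{BM} and give the short diagram argument above.
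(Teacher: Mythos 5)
The paper does not actually prove this proposition---it is quoted from \cite{BM} (Lemma~2)---so you are reconstructing Bousquet-M\'elou's argument rather than paralleling a proof in the text. Your overall plan is the right one, and your label bookkeeping is essentially correct: inserting $n+1$ immediately before the $i$-th LTR maximum leaves the first $i-1$ LTR maxima and all $h$ RTL maxima intact, kills the remaining LTR maxima, and adds $n+1$ to both lists, giving $(h+1,i)$; inserting it immediately after the $j$-th RTL maximum from the right gives $(j,k+1)$. (Your qualifier ``if $i<k$'' is spurious: the count $h+1$ holds for every $i\le k$, including $i=k$, because all RTL maxima of $\pi$ sit weakly to the right of the position of the value $n$ and therefore all survive.) However, your parenthetical on double-counting contains a genuine error: the slot immediately before the global maximum and the slot immediately after it are two \emph{distinct} slots (they flank the entry $n$), and they produce two distinct children, with labels $(h+1,k)$ and $(h,k+1)$ respectively. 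If they really coincided, the rule would have only $h+k-1$ children, contradicting the production you are trying to establish. The correct reason the $k$ ``before an LTR maximum'' slots and the $h$ ``after an RTL maximum'' slots are pairwise distinct is that no slot can be of both kinds: that would require adjacent entries $\pi_{p-1}\pi_p$ with $\pi_{p-1}$ an RTL maximum (hence $\pi_{p-1}>\pi_p$) and $\pi_p$ an LTR maximum (hence $\pi_p>\pi_{p-1}$).

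The second issue is that the step you rightly single out as the heart of the proof---that the legal slots are \emph{exactly} those adjacent to an LTR or RTL maximum---is left at the level of ``dominated on the left/right'', which does not yet produce a forbidden pattern. The missing idea is to use the maximum value $n$ of $\pi$ as the witnessing ``$3$''. Concretely, suppose the slot between $\pi_{p-1}$ and $\pi_p$ is such that $\pi_p$ is not an LTR maximum and $\pi_{p-1}$ is not an RTL maximum, so there are $i<p$ with $\pi_i>\pi_p$ and $\ell\ge p$ with $\pi_\ell>\pi_{p-1}$. If $n$ occurs in the suffix $\pi_p\cdots\pi_n$ (necessarily at a position $>p$), then $\pi_i\,(n{+}1)\,\pi_p\,n$ is an occurrence of $\commonpat$ in the enlarged permutation; if $n$ occurs in the prefix (necessarily at a position $<p-1$), then $n\,\pi_{p-1}\,(n{+}1)\,\pi_\ell$ is an occurrence of $\baxpat$. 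Conversely, safety of the slots adjacent to an LTR or RTL maximum follows by transferring any would-be occurrence of $\commonpat$ or $\baxpat$ involving the new maximum back to an occurrence in $\pi$ itself (e.g.\ if $\pi_p$ is an LTR maximum and $\pi_i\,\pi_{p-1}\,(n{+}1)\,\pi_\ell$ were an occurrence of $\baxpat$, then $\pi_i\,\pi_{p-1}\,\pi_p\,\pi_\ell$ would already be one in $\pi$). With these two repairs the proof is complete; deferring the whole statement to \cite{BM}, as the paper does, is of course also legitimate.
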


But note that Baxter permutations are invariant under the $8$ symmetries of the square. 
Consequently, up to a $90^\circ$ rotation, inserting a new maximum element in a Baxter permutation can be easily regarded as inserting a new element on the right of a Baxter permutation 
(as we did for semi-Baxter permutations). 
Those are then inserted immediately below a RTL minimum or immediately above a RTL maximum. 
Note that, in a semi-Baxter permutation, these are always active sites, so 
the generating tree associated with $\Omega_{Bax}$ is a subtree of the generating tree associated with $\Omega_{semi}$.
Through the rotation, the interpretation of the label $(h,k)$ of a Baxter permutation is modified as follows: $h$ (resp. $k$) is the number of its RTL minima (resp. RTL maxima), 
that is to say of active sites below (resp. above) the last element of the permutation. 
As expected, this coincides with the interpretation of labels in the growth of semi-Baxter permutations according to $\Omega_{semi}$.

\medskip

Turning to twisted Baxter permutations, specializing the growth of semi-Baxter permutations, we obtain the following. 
\begin{proposition}\label{prop:twist_rule}
A generating tree for twisted Baxter permutations can be obtained by insertions on the right, 
and it is isomorphic to the tree generated by the following succession rule: 
$$\Omega_{TBax}=\left\{\begin{array}{ll}
(1,1)\\
(h,k) \rightsquigarrow \hspace{-3mm}& (1,k), \dots ,(h-1,k), (h,k+1)\\
& (h+k,1), \dots , (h+1,k). \end{array}\right.$$
\end{proposition}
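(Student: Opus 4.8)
The plan is to mimic exactly the proof of Proposition~\ref{prop:semi_rule}, restricting the growth of semi-Baxter permutations to the subclass of twisted Baxter permutations $Av(\commonpat,\twistedpat)$. First I would note that removing the last element of a permutation avoiding $\commonpat$ and $\twistedpat$ yields a permutation that still avoids both, so a generating tree for twisted Baxter permutations is obtained by local expansions on the right; and since twisted Baxter permutations are a subset of semi-Baxter permutations, their generating tree is a subtree of the one for $\Omega_{semi}$. The active sites of a twisted Baxter permutation $\pi$ are those $a$ for which $\pi\cdot a$ still avoids both patterns; these are the active sites for $\commonpat$ (as analyzed in Proposition~\ref{prop:semi_rule}) that in addition do not create an occurrence of $\twistedpat$. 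I would characterize the sites whose insertion creates a $\twistedpat$: an occurrence of the ``suffix'' $\twistedpat$-fragment, i.e. a subsequence $\pi_j\pi_i\pi_{i+1}$ with $j<i$ and $\pi_{i+1}<\pi_i<\pi_j$ — this is a \emph{non-empty descent} in the terminology of the semi-Baxter proof but with the role of $\pi_j$ being the \emph{largest} rather than smallest element — so that inserting $a$ with $\pi_i < a \le \pi_j$ produces $\pi_j\,\pi_i'\,\pi_{i+1}'\,a'$ order-isomorphic to $3412$, hence a $\twistedpat$.

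Then I would assign to each twisted Baxter permutation $\pi$ of size $n$ the label $(h,k)$, where $h$ is the number of active sites smaller than or equal to $\pi_n$ and $k$ the number greater than $\pi_n$ (the same convention as in $\Omega_{semi}$), and check that $\pi=1$ has label $(1,1)$. The core of the proof is the case analysis on an active site $a$, following the three cases of Proposition~\ref{prop:semi_rule}: (i) $a>\pi_n$, where $\pi\cdot a$ ends in an ascent — here, exactly as for semi-Baxter, no new $\commonpat$ is created, and I would argue no new $\twistedpat$ can be created either since the newly inserted element sits at the end above its predecessor, so the site $a$ splits into two and the $k$ sites above $\pi_n$ give labels $(h+k,1),\dots,(h+1,k)$, the second row of $\Omega_{TBax}$; (ii) $a=\pi_n$, giving an empty descent and the label $(h,k+1)$; (iii) $a<\pi_n$, where $\pi\cdot a$ ends in a non-empty descent $(\pi_n+1)\,a$ — here I must track the deactivation of sites due to \emph{both} forbidden patterns. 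This last case is where $\Omega_{TBax}$ differs from $\Omega_{semi}$: I expect that after inserting $a$ as the $i$-th active site from the bottom (for $1\le i\le h-1$), the $\commonpat$-constraint deactivates sites in $(a+1,\pi_n+1]$ as before, but the $\twistedpat$-constraint additionally deactivates some sites \emph{above} $\pi_n+1$, precisely enough that the new label becomes $(i,k)$ rather than $(i,k+1)$ — accounting for the first row $(1,k),\dots,(h-1,k),(h,k+1)$ of $\Omega_{TBax}$.

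Concretely, for case (iii) I would argue as follows: in $\pi\cdot a$ the last two entries form a descent $(\pi_n+1)\,a$ with $a<\pi_n+1$, so for any active site $b>\pi_n+1$ of $\pi$ that remains, inserting such $b$ next would create the pattern $b,(\pi_n+1),a \to$ together with a subsequent insertion a $3412$; more carefully, the site immediately above $\pi_n+1$ that was active in $\pi$ (the ``$k$-th from the bottom'' counted among sites $\le\pi_n$, i.e. the transition point) is exactly the one lost, so that all $k$ of the sites that were above $\pi_n$ in $\pi$ contribute only $k-1$ active sites above the new last element, except in the boundary case where $a$ is the topmost among the sites $<\pi_n$ (giving label $(h,k+1)$, matching the last entry of the first row). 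I would verify the counts by examining a figure analogous to Figure~\ref{fig:semi}. The main obstacle I anticipate is getting this bookkeeping in case (iii) exactly right — precisely which site above $\pi_n+1$ is deactivated by the $\twistedpat$-constraint and why it is always exactly one, so that the first-row labels come out as $(1,k),\dots,(h-1,k),(h,k+1)$ and not, say, $(1,k+1),\dots$. This requires a clean characterization of twisted-Baxter active sites after a non-empty descent, analogous to the one-line characterization given for $Av(231)$, and carefully distinguishing the boundary subcase $a=\pi_n-1$ (largest site below $\pi_n$, adjacent to it) where no site is deactivated above. Everything else is a routine adaptation of the semi-Baxter argument.
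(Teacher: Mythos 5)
Your overall strategy is the right one and is exactly the paper's: grow twisted Baxter permutations by insertions on the right, keep the label $(h,k)$ counting active sites below/above $\pi_n$, and redo the three cases of Proposition~\ref{prop:semi_rule}, with only the case $a<\pi_n$ changing. However, your treatment of that crucial case rests on a mischaracterization of the pattern $\twistedpat$, and the step fails. In an occurrence $\pi_i\pi_j\pi_{j+1}\pi_k$ of $\twistedpat$ the final element (the ``$2$'') satisfies $\pi_{j+1}<\pi_k<\pi_i<\pi_j$, i.e.\ it lies \emph{below} the witness ``$3$'' and below the top of the descent --- not above, as in your claim that inserting $a$ with ``$\pi_i< a\le\pi_j$'' creates the pattern. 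Consequently, after inserting $a<\pi_n$ and forming the final descent $(\pi_n+1)\,a$, every site deactivated by the $\twistedpat$-constraint lies in the interval $(a,\pi_n+1)$; no site above $\pi_n+1$ is ever touched. The one extra site killed (beyond the interval $(a+1,\pi_n+1]$ already killed by $\commonpat$) is the site \emph{immediately above the new last element} $a$: it is always dead because the element of value $a+1$ in $\pi\cdot a$ sits at a position $\le n-1$ (it cannot be $\pi_n$ since $a<\pi_n$) and serves as the witness ``$3$''. This is what turns $(i,k+1)$ into $(i,k)$. A concrete counterexample to your mechanism: take $\pi=12$ (label $(2,1)$) and $a=1=\pi_n-1$, so $\pi\cdot a=231$. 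Then $231\cdot 2=3412$ contains $\twistedpat$, so the site just above the last element is dead, while $231\cdot4=2314$ avoids both patterns, so the site just above $\pi_n+1=3$ is alive --- the exact opposite of what you predict, and also a refutation of your claim that in the subcase $a=\pi_n-1$ ``no site is deactivated.''

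Your boundary-case bookkeeping is likewise off: the label $(h,k+1)$ in the first row of $\Omega_{TBax}$ is produced by $a=\pi_n$ (the empty descent), not by $a$ being the topmost active site strictly below $\pi_n$ (that one produces $(h-1,k)$). The fact that your (incorrect) accounting happens to yield $k$ active sites above the new last element in the generic subcase does not rescue the argument, since the derivation must correctly identify the active sites for the induction to continue, and your rules give wrong labels in the boundary subcases. The fix is short: replace your deactivation rule by the paper's statement that, in addition to the sites in $(a+1,\pi_n+1]$ removed by $\commonpat$, exactly the site immediately above $a$ is removed by $\twistedpat$, justified by the witness $a+1$ as above. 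The remaining two cases ($a=\pi_n$ and $a>\pi_n$) are handled correctly in your proposal and as in the paper.
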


\begin{proof}
As in the proof of Proposition~\ref{prop:semi_rule}, we let twisted Baxter permutations grow by performing local expansions on the right, 
as illustrated in Figure~\ref{fig:twisted_growth}. 
(This is possible since removing the last element in a twisted Baxter permutation produces a twisted Baxter permutation.) 

Let $\pi$ be a twisted Baxter permutation of size $n$. By definition an active site of $\pi$ is an element $a$ such that $\pi\cdot a$ avoids the two forbidden patterns. 
Then, we assign to $\pi$ a label $(h,k)$, where $h$ (resp. $k$) is the number of active sites smaller than or equal to (resp. greater than) $\pi_n$. 
As in the proof of Proposition~\ref{prop:semi_rule}, the permutation $1$ has label $(1,1)$ and now we describe the labels of the permutations $\pi\cdot a$ when $a$ runs over all the active sites of $\pi$.

If $a<\pi_n$, then $\pi\cdot a$ ends with a non-empty descent and, as in the proof of Proposition~\ref{prop:semi_rule}, 
all sites of $\pi$ in the range $(a+1,\pi_n+1]$ become non-active in $\pi\cdot a$ (due to the avoidance of $\commonpat$). 
Moreover, due to the avoidance of $\twistedpat$, the site immediately above $a$ in $\pi\cdot a$ also becomes non-active. 
All other active sites of $\pi$ remain active in $\pi\cdot a$, hence giving the labels $(i,k)$, for $1\leq i<h$, in the productions of $\Omega_{TBax}$ 
($(i,k)$ corresponds to the case where $a$ is the $i$th active site from the bottom). 

If $a=\pi_n$, no sites of $\pi$ become non-active, giving the label $(h,k+1)$.

If $a>\pi_n$, then $\pi\cdot a$ ends with an ascent and no site of $\pi$ become non-active.
Hence, we obtain the missing labels in the production of $\Omega_{TBax}$: $(h+k+1-i,i)$, for $1\leq i\leq k$ 
(the label $(h+k+1-i,i)$ corresponds to $a$ being the $i$th active site from the top).
\end{proof}

\begin{figure}[ht]
\centering
\includegraphics[width=0.6\textwidth]{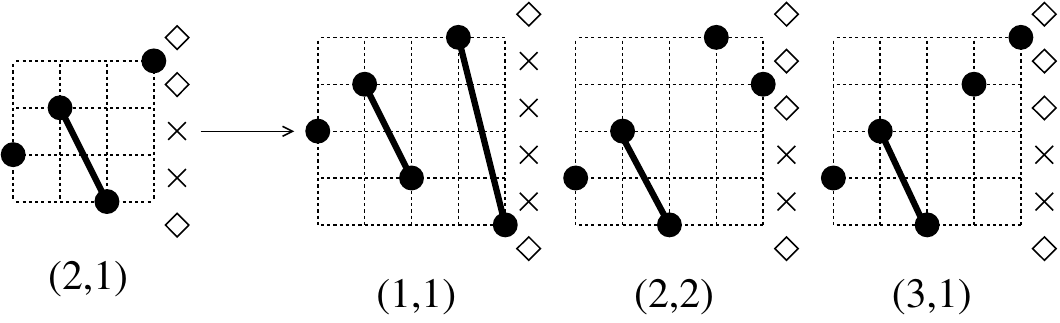}
\caption{The growth of twisted Baxter permutations (with notation as in Figure~\ref{fig:semi}).}
\label{fig:twisted_growth}
\end{figure}

We remark that although $\Omega_{TBax}$ is not precisely the succession rule presented in~\cite{Twis} for twisted Baxter permutations, 
it is an obvious variant of it: indeed, starting from the rule of~\cite{Twis}, it is enough to replace every label $(q,r)$ by $(r+1,q-1)$ to recover $\Omega_{TBax}$. 

It follows immediately from the proof of Proposition~\ref{prop:twist_rule} that $\Omega_{TBax}$ is a specialization of $\Omega_{semi}$. 
With $\Omega_{Bax}$, we therefore obtain two such specializations. 
In addition, we can observe that the productions of $\Omega_{TBax}$ on second line are the same as in $\Omega_{semi}$, 
whereas the productions on the first line of $\Omega_{Bax}$ are the same as $\Omega_{semi}$. 
This means that the restrictions imposed by these two specializations are ``independent''. 
We will combine them in Section~\ref{sec:strong}, obtaining a succession rule which consists of the first line of $\Omega_{TBax}$ and the second line of $\Omega_{Bax}$. 

\section{Strong-Baxter numbers} \label{sec:strong}
While Section~\ref{sec:semi} was studying a sequence larger than the Baxter numbers (with a family of permutations containing both the Baxter and twisted Baxter permutations), 
we now turn to a sequence smaller than the Baxter numbers (associated with a family of permutations included in both families of Baxter and twisted Baxter permutations). 
We present a succession rule for this sequence, and properties of its generating function. 

\subsection{Strong-Baxter numbers, strong-Baxter permutations, and their succession rule}

\begin{definition}
A \emph{strong-Baxter permutation} is a permutation that avoids all three vincular patterns $\commonpat$, $\baxpat$ and $\twistedpat$.
\end{definition}

\begin{definition}
The sequence of \emph{strong-Baxter numbers} is the sequence that enumerates strong-Baxter permutations. 
\end{definition}

We have added the sequence enumerating strong-Baxter permutations to the OEIS, where it is now registered as \cite[A281784]{OEIS}. It starts with:
\[1,2,6,21,82,346,1547,, 7236, 35090, 175268, 897273, 4690392, 24961300, \ldots\]
The pattern-avoidance definition makes it clear that the family of strong-Baxter permutations is the intersection of the two families of Baxter and twisted Baxter permutations. 
In that sense, these permutations ``satisfy two Baxter conditions'', hence the name strong-Baxter. 

\medskip

A succession rule for strong-Baxter numbers is given by the following proposition. 

\begin{proposition} 
A generating tree for strong-Baxter permutations can be obtained by insertions on the right, 
and it is isomorphic to the tree generated by the following succession rule: 
$$\Omega_{strong}=\left\{\begin{array}{ll}
(1,1)\\
(h,k) \rightsquigarrow \hspace{-3mm}& (1,k), \dots ,(h-1,k), (h,k+1)\\
& (h+1,1), \dots , (h+1,k). \end{array}\right.$$
\end{proposition}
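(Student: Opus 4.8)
The plan is to mimic the proofs of Propositions~\ref{prop:semi_rule} and~\ref{prop:twist_rule}, combining the two sets of restrictions on active sites since strong-Baxter permutations are exactly the twisted Baxter permutations that are also Baxter. First I would note that removing the last element of a permutation avoiding $\commonpat$, $\baxpat$ and $\twistedpat$ yields a permutation still avoiding all three, so a generating tree is obtained by local expansions on the right. As before, for a strong-Baxter permutation $\pi$ of size $n$ I would call $a \in \{1,\dots,n+1\}$ an \emph{active site} if $\pi \cdot a$ is still strong-Baxter, and assign to $\pi$ the label $(h,k)$ where $h$ (resp.\ $k$) is the number of active sites smaller than or equal to (resp.\ greater than) $\pi_n$. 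The permutation $1$ has label $(1,1)$.

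The core of the argument is the case analysis on an active site $a$ of $\pi$, combining the deactivations coming from $\commonpat$/$\baxpat$ (as in Proposition~\ref{prop:growthBaxterPerm}, i.e.\ in the rotated ``insertion on the right'' picture) with those coming from $\twistedpat$ (as in Proposition~\ref{prop:twist_rule}). Concretely: (i) if $a < \pi_n$, then $\pi \cdot a$ ends with a descent; the avoidance of $\commonpat$ (equivalently $2\underbracket[.5pt][1pt]{31}$) kills all sites in $(a+1,\pi_n+1]$, and the avoidance of $\twistedpat$ additionally kills the site immediately above $a$, while all other sites stay active — this produces the labels $(i,k)$ for $1 \le i < h$; (ii) if $a = \pi_n$, $\pi \cdot a$ ends with an empty descent and no site is deactivated, the extra site from splitting $a$ giving label $(h,k+1)$; (iii) if $a > \pi_n$, then $\pi \cdot a$ ends with an ascent; here the $\baxpat$ restriction (which in the ``insertion on the right'' form limits growth above the last element, exactly the first line of $\Omega_{Bax}$) deactivates all sites strictly above $a$ except the one immediately above $a$ from the split, so that if $a$ is the $i$-th active site from the top, $\pi \cdot a$ has label $(h+1,i)$ — giving $(h+1,1),\dots,(h+1,k)$ for $i=1,\dots,k$. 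Assembling these three cases yields exactly $\Omega_{strong}$, and the uniqueness of each object in the tree is immediate since distinct sites give distinct permutations.

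The main obstacle — and the place I would be most careful — is verifying case~(iii), i.e.\ that when $a > \pi_n$ the $\baxpat$-condition deactivates precisely the sites strictly above $a$ (not counting the split site immediately above $a$), so that the second label entry becomes $i$ rather than something larger, and checking that the $\twistedpat$-condition imposes no \emph{further} deactivation in this ascent case. This needs a clean description of which insertions above $\pi_n$ create an occurrence of $\baxpat = 3\underbracket[.5pt][1pt]{14}2$ (equivalently, after the rotation used in Subsection~\ref{sec:Bax_known}, of a pattern controlling new maxima), paralleling the ``non-empty descent'' discussion from the proof of Proposition~\ref{prop:semi_rule} but on the ascent side. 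Once that characterization of non-active sites above $\pi_n$ is in hand, the bookkeeping of how many active sites survive on each side of the new last element $a$ is routine, and one recovers the productions $(1,k),\dots,(h-1,k),(h,k+1)$ on the first line and $(h+1,1),\dots,(h+1,k)$ on the second. I would also remark, as the paper anticipates, that $\Omega_{strong}$ is obtained by taking the first line of $\Omega_{TBax}$ together with the second line of $\Omega_{Bax}$, which provides a useful sanity check on the final rule.
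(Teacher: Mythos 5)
Your setup and cases (i)--(ii) follow the paper's proof exactly (grow by insertions on the right, label $(h,k)$ by the active sites weakly below / strictly above $\pi_n$, descent case handled as in Proposition~\ref{prop:twist_rule}). The problem is case (iii), which you yourself identify as the crux, and which you argue incorrectly. When $a>\pi_n$ is inserted, the sites deactivated by the avoidance of $\baxpat$ are those lying strictly \emph{between} $\pi_n$ and $a$, i.e.\ below the new last element: in the paper's phrasing, all sites of $\pi\cdot a$ in $[\pi_n+1,a)$ become non-active (via occurrences of $2\underbracket[.5pt][1pt]{13}$ whose adjacent pair is the new final ascent), while every site above $a$ survives. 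You assert instead that $\baxpat$ ``deactivates all sites strictly above $a$ except the one immediately above $a$ from the split''. Besides being the wrong set, this is inconsistent with the label $(h+1,i)$ you then claim: under your deactivation, $\pi\cdot a$ would have $h+(k-i)+1$ active sites weakly below its last entry and only one above it, giving label $(h+k+1-i,1)$ and collapsing the second line of the rule to $(h+k,1),\dots,(h+1,1)$. A quick sanity check already rules this out: the production $(h+1,1),\dots,(h+1,k)$ has second coordinate ranging up to $k$, which forces the sites above $a$ to remain active.

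Your parenthetical that $\baxpat$ ``limits growth above the last element, exactly the first line of $\Omega_{Bax}$'' has the geometry backwards: it is the \emph{second} line of $\Omega_{Bax}$ that differs from $\Omega_{semi}$, and, as the paper remarks after the statement of $\Omega_{strong}$, the avoidance of $\baxpat$ deactivates only sites below the rightmost element while $\twistedpat$ deactivates only sites above it. To repair case (iii) you need the ascent-side analogue of the ``non-empty descent'' observation: for $a>\pi_n+1$ the element of value $a-1$ sits at a position before the final ascent of $\pi\cdot a$, so it serves as the ``$3$'' of $\baxpat$ and kills every site in $[\pi_n+1,a)$; the $h$ old active sites below $\pi_n$ plus the split site just below $a$ then give first coordinate $h+1$, and the $i-1$ surviving sites above $a$ plus the split site just above $a$ give second coordinate $i$, yielding $(h+1,i)$ as required.
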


\begin{figure}[ht]
\centering
\includegraphics[width=0.8\textwidth]{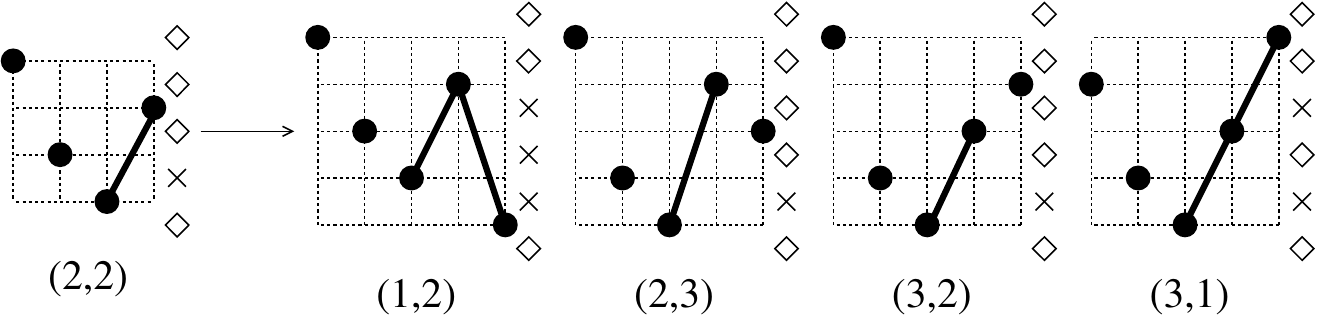}
\caption{The growth of strong-Baxter permutations (with notation as in Figure~\ref{fig:semi}).}
\label{fig:strong}
\end{figure}

\begin{proof}
As in the proof of Propositions~\ref{prop:semi_rule} and~\ref{prop:twist_rule}, we build a generating tree for strong-Baxter permutations 
performing local expansions on the right, as illustrated in Figure~\ref{fig:strong}. 
Note that this is possible since removing the last point from any strong-Baxter permutation gives a strong-Baxter permutation. 

Let $\pi$ be a strong-Baxter permutation of size $n$. 
By definition, the active sites of $\pi$ are the $a$'s such that $\pi \cdot a$ is a strong-Baxter permutations. Any non-empty descent (resp. ascent) of $\pi$ is a pair $\pi_i \pi_{i+1}$ such that there exists $\pi_j$ that makes $\pi_j \pi_i \pi_{i+1}$ an occurrence of $2\underbracket[.5pt][1pt]{31}$ (resp. $2\underbracket[.5pt][1pt]{13}$). Then, the non-active sites $a$ of $\pi$ are characterized by the fact that $a \in (\pi_{i+1},\pi_i]$ (resp. $a \in (\pi_i,\pi_j]\,$), for some occurrence $\pi_j \pi_i \pi_{i+1}$ of $2\underbracket[.5pt][1pt]{31}$ (resp. $2\underbracket[.5pt][1pt]{13}$).
Note that in the case where $\pi_{n-1} \pi_n$ is a non-empty descent (resp. ascent), choosing $\pi_j = \pi_{n} +1$ (resp. $\pi_j = \pi_{n}-1$) always gives an occurrence of $2\underbracket[.5pt][1pt]{31}$ (resp. $2\underbracket[.5pt][1pt]{13}$), 
and it is the smallest (resp. largest) possible value of $\pi_j$ for which $\pi_j \pi_{n-1} \pi_n$ is an occurrence of $2\underbracket[.5pt][1pt]{31}$ (resp. $2\underbracket[.5pt][1pt]{13}$).

To the strong-Baxter permutation $\pi$ we assign the label $(h,k)$, where $h$ (resp. $k$) is the number of active sites that are smaller than or equal to (resp. greater than) $\pi_n$. 
As in the proof of Proposition~\ref{prop:semi_rule}, the permutation $1$ has label $(1,1)$, 
and we now need to describe, for $\pi$ of label $(h,k)$, the labels of the permutations $\pi \cdot a$ when $a$ runs over all active sites of $\pi$. 
So, let $a$ be such an active site. 

If $a < \pi_n$, then $\pi \cdot a$ ends with a non-empty descent. 
As in the proof of Proposition~\ref{prop:semi_rule}, all sites of $\pi \cdot a$ in $(a+1,\pi_n+1]$ become non-active 
(due to the avoidance of $\commonpat$). 
Moreover, due to the avoidance of $\twistedpat$, the site immediately above $a$ in $\pi \cdot a$ also becomes non-active. 
All other active sites of $\pi$ remain active in $\pi \cdot a$, hence giving the labels $(i,k)$ for $1 \leq i < h$ in the production of $\Omega_{strong}$
(again, $i$ is such that $a$ is the $i$-th active site from the bottom). 

If $a=\pi_n$, no site of $\pi$ becomes non-active, giving the label $(h,k+1)$ in the production of $\Omega_{strong}$. 

Finally, if $a > \pi_n$, then $\pi \cdot a$ ends with an ascent. 
Because of the avoidance of $\baxpat$, 
we need to consider the occurrences of $2\underbracket[.5pt][1pt]{13}$ in $\pi$ to identify which active sites of $\pi$ become non-active in $\pi \cdot a$. 
It follows from a discussion similar to that in the proof of Proposition~\ref{prop:semi_rule} 
that all sites of $\pi \cdot a$ in $[\pi_n+1,a)$ become non-active. 
Hence, we obtain the missing labels in the production of $\Omega_{strong}$: 
$(h+1,i)$ for $1 \leq i \leq k$ (where $i$ indicates that $a$ is the $i$-th active site from the top). 
\end{proof}

In the same sense that both $\Omega_{Bax}$ and  $\Omega_{TBax}$ specialize $\Omega_{semi}$, 
it is easy to see that  the succession rule $\Omega_{strong}$ is a specialization 
of the rule $\Omega_{Bax}$ (for Baxter permutations) as well as of the rule $\Omega_{TBax}$ (for twisted Baxter permutations). 
In this case, the rule $\Omega_{strong}$ associated with the intersection of these two families is simply obtained by taking, for each object produced, 
the minimum label among the two labels given by $\Omega_{Bax}$ and $\Omega_{TBax}$.
This appears clearly in the following representation:
$$\begin{array}{lllcccccccc}
\Omega_{semi}:&(h,k)&\rightarrow&(1,k+1)&\dots&(h-1,k+1)&(h,k+1)&(h+k,1)&\dots&(h+1,k)\\
\Omega_{Bax}:&(h,k)&\rightarrow&(1,k+1)&\dots&(h-1,k+1)&(h,k+1)&(h+1,1)&\dots&(h+1,k)\\ 
\Omega_{TBax}:&(h,k)&\rightarrow&(1,k)&\dots&(h-1,k)&(h,k+1)&(h+k,1)&\dots&(h+1,k)\\ 
\Omega_{strong}:&(h,k)&\rightarrow&(1,k)&\dots&(h-1,k)&(h,k+1)&(h+1,1)&\dots&(h+1,k).
\end{array}$$ 
This is easily explained. 
Note first that in all four cases $h$ (resp. $k$) records the number of active sites below (resp. above) the rightmost element of a permutation. 
Then, it is enough to remark that among the active sites of a semi-Baxter permutation (avoiding $\commonpat$), 
the avoidance of $\twistedpat$ deactivates only sites above the rightmost element of the permutation, 
while the avoidance of $\baxpat$ deactivates only sites below it. 

\subsection{Generating function of strong-Baxter numbers}
Let $I_{h,k}(x)\equiv I_{h,k}$ denote the generating function for strong-Baxter permutations having label $(h,k)$, with $h,k\geq1$, and let $I(x;y,z)\equiv I(y,z)=\sum_{h,k\geq 1} I_{h,k} y^h z^k$. 
(The notation $I$ stands for Intersection, of the families of Baxter and twisted Baxter permutations.)

\begin{proposition}
The generating function $I(y,z)$ satisfies the following functional equation:
\begin{equation}
\label{inters} 
I(y,z)=xyz+\frac{x}{1-y}(y\,I(1,z)-I(y,z))+xz\,I(y,z)+\frac{xyz}{1-z}(I(y,1)-I(y,z)).
\end{equation}
\end{proposition}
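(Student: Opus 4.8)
The plan is to translate the succession rule $\Omega_{strong}$ directly into a functional equation, exactly following the model of the proof of Proposition~\ref{prop:funcEqSemiBax}. Recall that $I_{h,k}$ is the generating function for strong-Baxter permutations with label $(h,k)$ (each counted by size), so $I(y,z)=\sum_{h,k\geq1}I_{h,k}y^hz^k$ records the label distribution. The root contributes $xyz$ (the permutation $1$, of size $1$, with label $(1,1)$). For the recursive part, I would sum over all strong-Baxter permutations $\pi$ with label $(h,k)$ the contribution of their children under $\Omega_{strong}$: a child has size one greater than $\pi$, hence an extra factor $x$, and its label contributes the appropriate monomial $y^{h'}z^{k'}$.

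Concretely, from the production
\[
(h,k)\rightsquigarrow (1,k),\dots,(h-1,k),(h,k+1),(h+1,1),\dots,(h+1,k),
\]
the children of $\pi$ contribute
\[
x\,I_{h,k}\Bigl((y+y^2+\dots+y^{h-1})z^k + y^h z^{k+1} + (y^{h+1}z + y^{h+1}z^2 + \dots + y^{h+1}z^k)\Bigr).
\]
Then I would sum over $h,k\geq1$ and recognize each of the three groups as a geometric-type sum. The first group gives $\dfrac{y(1-y^{h-1})}{1-y}z^k = \dfrac{y\,z^k - y^h z^k}{1-y}$, which after summation yields $\dfrac{x}{1-y}\bigl(y\,I(1,z)-I(y,z)\bigr)$ — note the $y\,I(1,z)$ rather than $\frac{y}{\cdot}I(1,z)$ because the numerator is $y z^k$ (not $z^k$) when $h'$ would range from $1$; here it is cleaner to write $\sum_{h\geq1}(y+\dots+y^{h-1})=\sum_{h\geq1}\frac{y-y^h}{1-y}$, giving $\frac{1}{1-y}\bigl(yI(1,z)-I(y,z)\bigr)$ once the $z^k$ and $I_{h,k}$ factors and the $x$ are reinstated. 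The middle term $y^hz^{k+1}$ sums to $xz\,I(y,z)$. The last group $y^{h+1}(z+z^2+\dots+z^k)=y^{h+1}\dfrac{z-z^{k+1}}{1-z}$ sums to $\dfrac{xyz}{1-z}\bigl(I(y,1)-I(y,z)\bigr)$. Adding the root term $xyz$ gives exactly eq.~\eqref{inters}.

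There is no real obstacle here: the only point requiring a little care is the bookkeeping of which variable ($y$ or $z$) picks up the geometric sum in each of the three groups of the production, and correctly handling the lower index of each sum (the first group starts the $y$-exponent at $1$ and stops at $h-1$, so it is empty when $h=1$; the third group has the $z$-exponent running from $1$ to $k$ with a fixed $y^{h+1}$). One should double-check that the specialization $y=z=1$ of eq.~\eqref{inters} is consistent with $I(1,1)=x + (\text{higher order})$ and reproduces the first terms $1,2,6,21,82,\dots$ of the strong-Baxter sequence, which serves as a sanity check that the coefficients in front of $I(1,z)$, $I(y,z)$ and $I(y,1)$ have been recorded correctly. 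Apart from this routine verification, the proof is a direct transcription of the succession rule, just as in Proposition~\ref{prop:funcEqSemiBax}.
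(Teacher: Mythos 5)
Your proposal is correct and is essentially identical to the paper's own proof: both translate the production of $\Omega_{strong}$ term by term into the three geometric sums $(y+\dots+y^{h-1})z^k$, $y^hz^{k+1}$, and $y^{h+1}(z+\dots+z^k)$, and simplify to obtain eq.~\eqref{inters}. The bookkeeping you describe (in particular the numerator $y\,I(1,z)$ coming from $\sum_h (y-y^h)/(1-y)$) matches the paper exactly.
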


\begin{proof} 
From the growth of strong-Baxter permutations according to $\Omega_{strong}$ we write:
\begin{align*}
I(y,z) &= xyz+x\sum_{h,k\geq 1} I_{h,k} \left( (y+y^2+\dots +y^{h-1})z^{k}+y^h z^{k+1}+y^{h+1}(z+z^2+\dots +z^k) \right) \\
		&= xyz+x\sum_{h,k\geq 1} I_{h,k}\left( \frac{1-y^{h-1}}{1-y} y\, z^{k} +y^h z^{k+1}+  \frac{1-z^{k}}{1-z} y^{h+1}\, z\right) \\
          &= xyz+ \frac{x}{1-y} \left (y\,I(1,z) - I(y,z) \right) + xz\,I(y,z)+\frac{xyz}{1-z} \left ( I(y,1) - I(y,z) \right) \, . \qedhere
\end{align*}
\end{proof}

In order to study the nature of the generating function $I(1,1)$ for strong-Baxter numbers, 
we look at the kernel of eq.~\eqref{inters}, which is 
\begin{equation}
\label{kernel_in}
K(y,z)=1+x\left(\frac{1}{1-y}-z+\frac{yz}{1-z}\right).
\end{equation}
We perform the substitutions $y=1+a$ and $z=1+b$ so that eq.~\eqref{kernel_in} is rewritten as 
\begin{equation}
\label{kerab_in}
K(1+a,1+b)= 1-x Q(a,b) \textrm{ \ where \ } Q(a,b) = \frac{1}{a}+\frac{1}{b}+\frac{a}{b}+a+2+b.
\end{equation}

As in the proof of Theorem~\ref{thm:GFSemiBaxter} (see Subsection~\ref{sec:proofs_GF}), we look for the birational transformations 
$\Phi$ and $\Psi$ in $a$ and $b$ that leave the kernel unchanged, which are: 
$$\Phi:(a,b)\rightarrow\left(a,\frac{1+a}{b}\right),\;\;\;\mbox{ and 
}\;\;\;\Psi:(a,b)\rightarrow\left(-\frac{b}{a(1+b)},b\right).$$
One observes, using Maple for example, that the group generated by these two transformations is not of small order. 
We actually suspect that it is of infinite order, preventing us from using the obstinate kernel method to solve eq.~\eqref{inters}.

Nevertheless, after the substitution $y=1+a$ and $z=1+b$, the kernel we obtain in eq.~\eqref{kerab_in} resembles kernels of functional equations 
associated with the enumeration of families of walks in the (positive) quarter plane \cite{BMM}.

\begin{proposition}\label{prop:walks}
Let $W(t;a,b)$ be the generating function for walks confined in the quarter plane and using 
$\{(-1,0)$, $(0,-1),(1,-1)$, $(1,0)$, $(0,1)\}$ as step set, 
where $t$ counts the number of steps and $a$ (resp. $b$) records the $x$-coordinate (resp. $y$-coordinate) of the ending point. 
The function $W(t;a,b)$ satisfies the following functional equation:
\begin{equation}\label{walks}
W(t;a,b)=1+t\left(\frac{1}{a}+\frac{1}{b}+\frac{a}{b}+a+b\right)W(t;a,b)-\frac{t}{a}W(t;0,b)-t\frac{(1+a)}{b}W(t;a,0).
\end{equation}
\end{proposition}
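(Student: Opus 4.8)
The plan is to derive the functional equation by the standard ``step-by-step'' construction of walks in the quarter plane, exactly as done for instance in~\cite{BMM}. A walk of length $n$ in the quarter plane, starting at the origin and using the step set $\mathcal{S} = \{(-1,0), (0,-1), (1,-1), (1,0), (0,1)\}$, is either the empty walk (contributing $1$ to $W(t;a,b)$), or is obtained from a walk of length $n-1$ by appending one step of $\mathcal{S}$, subject to the constraint that the endpoint remains in $\mathbb{Z}_{\geq 0}^2$. First I would write the ``na\"ive'' recursion: appending each of the five steps to an arbitrary quarter-plane walk multiplies the generating function by the corresponding monomial, giving the term
\[
t\left(\frac{1}{a}+\frac{1}{b}+\frac{a}{b}+a+b\right)W(t;a,b),
\]
where $\bar a = 1/a$ encodes a step $(-1,0)$, $\bar b = 1/b$ a step $(0,-1)$, $a\bar b = a/b$ a step $(1,-1)$, $a$ a step $(1,0)$, and $b$ a step $(0,1)$. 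This overcounts: it includes walks that leave the quarter plane at the last step, and those must be subtracted off.

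The next step is to identify precisely the forbidden configurations. A step can push the walk out of the quarter plane only by making one coordinate negative. The steps with a negative $x$-increment are $(-1,0)$; applied to a walk ending on the $y$-axis (i.e.\ with $x$-coordinate $0$) it produces $x=-1$, which must be removed. Such walks are counted by $\bar a \cdot W(t;0,b)$, so we subtract $t\,\bar a\,W(t;0,b) = \tfrac{t}{a}W(t;0,b)$. The steps with a negative $y$-increment are $(0,-1)$ and $(1,-1)$; applied to a walk ending on the $x$-axis (i.e.\ with $y$-coordinate $0$) they produce $y=-1$. These contribute $(\bar b + a\bar b)\cdot W(t;a,0) = \bar b(1+a)W(t;a,0)$, so we subtract $t\,\bar b(1+a)W(t;a,0) = t\,\tfrac{1+a}{b}W(t;a,0)$. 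Note there is no double-counting: a walk ending at the origin contributes to both correction terms, but it is genuinely counted once in each — once for the forbidden $x$-decreasing continuation and once (in fact twice, via the two $y$-decreasing steps) for the forbidden $y$-decreasing continuations — and all of these are distinct walks of length $n$ that we are removing. Collecting the three pieces gives exactly eq.~\eqref{walks}:
\[
W(t;a,b)=1+t\left(\frac{1}{a}+\frac{1}{b}+\frac{a}{b}+a+b\right)W(t;a,b)-\frac{t}{a}W(t;0,b)-t\frac{(1+a)}{b}W(t;a,0).
\]

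I do not expect any serious obstacle here: the only point requiring a little care is bookkeeping in the subtraction of boundary terms — making sure that the step $(1,-1)$, which decreases $y$ but increases $x$, is grouped with $(0,-1)$ in the $W(t;a,0)$ correction (hence the factor $1+a$) rather than overlooked or mistakenly attached to the $x$-axis term, and checking that a walk never exits through a corner in a way that would need an inclusion–exclusion correction (it cannot, since a single step changes each coordinate by at most $1$, so it can violate at most one of the two nonnegativity constraints). Once this is verified, the equation follows immediately from the disjoint decomposition of nonempty walks according to their last step, and one may optionally cross-check it by extracting low-order coefficients in $t$ against a direct enumeration of short walks.
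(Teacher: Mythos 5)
Your proof is correct and is exactly the standard last-step decomposition from the quarter-plane walks literature (\cite{BMM}) that the paper implicitly relies on; the paper in fact states Proposition~\ref{prop:walks} without proof, treating it as routine. Your bookkeeping is right: only $(-1,0)$ can violate $x\geq 0$, only $(0,-1)$ and $(1,-1)$ can violate $y\geq 0$ (hence the factor $1+a$), and no single step decreases both coordinates, so no corner correction is needed.
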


Not only can we take inspiration from the literature on walks in the quarter plane for our problem of solving eq.~\eqref{inters}, but modifying the step set, we can even arrange that $K(1+a,1+b)$ is exactly the kernel arising in the functional equation for enumerating a family of walks. 

\begin{lemma}\label{lem:strong}
Let $W_2(t;a,b)$ be the generating function for walks confined in the quarter plane and using 
$\{(-1,0),(0,-1),(1,-1),(1,0),(0,1),(0,0), (0,0)\}$ as step (multi-)set, 
where $t$ counts the number of steps and $a$ (resp. $b$) records the $x$-coordinate (resp. $y$-coordinate) of the ending point. 
The difference with the step set of Proposition~\ref{prop:walks} is that 
we have added two copies of the trivial step $(0,0)$, which are distinguished (they can be considered as counterclockwise and clockwise loops for instance). 

The generating functions $W(t;a,b)$ and $W_2(t;a,b)$ are related by 
\begin{equation}
\label{eq:WW2}
W_2(x;a,b) =W\left(\frac{x}{1-2x};a,b\right) \frac{1}{1-2x} 
\end{equation}

Moreover, denoting by $J(x;a,b):=I(x;1+a,1+b)$ the generating function for strong-Baxter permutations, it holds that 
\begin{equation}
\label{eq:JW2}
J(x;a,b) = (1+a)(1+b)\,x\,W_2(x;a,b).
\end{equation}
\end{lemma}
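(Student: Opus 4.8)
The plan is to prove the two identities of Lemma~\ref{lem:strong} separately, starting with the purely combinatorial relation~\eqref{eq:WW2} between $W_2$ and $W$, and then deriving~\eqref{eq:JW2} by comparing functional equations.

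\textbf{Step 1: the relation between $W_2$ and $W$.} The step multiset for $W_2$ is obtained from that of $W$ by adjoining two distinguished copies of the null step $(0,0)$. A walk counted by $W_2$ is therefore a walk counted by $W$ in which, at each ``time slot'', one may either take one of the five genuine steps or one of the two loops, the loops not affecting the position and never taking the walk out of the quarter plane. Equivalently, a $W_2$-walk of length $m$ is the data of a $W$-walk of some length $j \le m$ together with a choice, for the $m-j$ remaining slots interleaved among the $j$ genuine steps, of which of the two loops is used. Counting the interleavings: a $W$-walk with $j$ genuine steps, refined by inserting $m-j$ loops each of which can be one of two types, contributes $\binom{m}{j} 2^{m-j}$ walks of length $m$ ending at the same point (the $\binom{m}{j}$ chooses the positions of the genuine steps among the $m$ slots). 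Summing over $m \ge j$, the ordinary generating variable $x^m$ attached to a fixed $W$-walk with $j$ genuine steps becomes $\sum_{m \ge j} \binom{m}{j} 2^{m-j} x^m = x^j/(1-2x)^{j+1} = \frac{1}{1-2x}\left(\frac{x}{1-2x}\right)^j$. Since in $W(t;a,b)$ the variable $t$ marks exactly the number of genuine steps, summing over all $W$-walks yields precisely $W_2(x;a,b) = \frac{1}{1-2x} W\!\left(\frac{x}{1-2x};a,b\right)$, which is~\eqref{eq:WW2}. I expect this bookkeeping to be straightforward once the ``time slot'' picture is set up; the only care needed is the off-by-one in the exponent $j+1$.

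\textbf{Step 2: the relation between $J$ and $W_2$.} Here I would work at the level of functional equations. Substituting $t \mapsto x/(1-2x)$ into~\eqref{walks} and multiplying through by $1/(1-2x)$, one obtains, using~\eqref{eq:WW2}, a functional equation for $W_2(x;a,b)$ whose kernel is $1 - \frac{x}{1-2x}\big(\tfrac1a+\tfrac1b+\tfrac ab+a+b\big) - 2x \cdot \frac{1}{1-2x}\cdot\frac{1-2x}{1}$ — more precisely, collecting the two new loop steps contributes an extra $2x\,W_2$ on the right-hand side, so the kernel of the $W_2$-equation is $1 - x\big(\tfrac1a+\tfrac1b+\tfrac ab+a+b+2\big) = 1 - xQ(a,b) = K(1+a,1+b)$ by~\eqref{kerab_in}. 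Thus $W_2$ and $J=I(x;1+a,1+b)$ satisfy functional equations with the \emph{same} kernel. It then remains to check that, after the substitutions $y=1+a$, $z=1+b$ in~\eqref{inters} and dividing by $(1+a)(1+b)x$, the equation for $J/((1+a)(1+b)x)$ coincides term by term with the $W_2$-equation: the inhomogeneous term $xyz$ in~\eqref{inters} becomes the constant $1$ (matching the ``empty walk'' term of $W_2$), and the two boundary terms involving $I(1,z)=J(x;0,b)$-type specializations and $I(y,1)$-type specializations must match the boundary terms $-\frac{x}{1-2x}\cdot\frac1a\,W(\ldots;0,b)$ and $-\frac{x}{1-2x}\cdot\frac{1+a}{b}\,W(\ldots;a,0)$ of the transformed~\eqref{walks}, after re-expressing the latter through $W_2$ via~\eqref{eq:WW2}. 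Matching these boundary terms is the delicate point: one has to verify that $J(x;0,b) = (1+b)x\,W_2(x;0,b)$ and $J(x;a,0) = (1+a)x\,W_2(x;a,0)$ are consistent with the claimed $J = (1+a)(1+b)x\,W_2$ — which they are, being its specializations — and that the rational prefactors line up exactly. Since both $J$ and $(1+a)(1+b)x\,W_2$ are formal power series in $x$ determined uniquely by a functional equation of this catalytic type with a given kernel and given boundary unknowns (the standard uniqueness argument for such equations: the coefficient of $x^n$ is determined by lower-order coefficients), the agreement of the equations forces $J(x;a,b) = (1+a)(1+b)\,x\,W_2(x;a,b)$.

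\textbf{Main obstacle.} The real work is entirely in Step 2, in the algebraic verification that the functional equation~\eqref{inters} for $I$, after the shift $y=1+a$, $z=1+b$ and division by $(1+a)(1+b)x$, is literally the functional equation satisfied by $W_2$ — in particular that the three kernel contributions $\frac{1}{1-y}$, $-z$ (plus the $xz\,I$ term), and $\frac{yz}{1-z}$ reorganize, after the substitution, into $\frac1a+\frac1b+\frac ab+a+b+2$ together with the boundary terms in the correct form with the correct rational coefficients $\frac1a$ and $\frac{1+a}{b}$. This is a finite but somewhat fiddly partial-fraction computation; everything else (the combinatorial Step~1, and the uniqueness of the power-series solution) is routine.
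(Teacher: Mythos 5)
Your proposal is correct and follows essentially the same route as the paper: the same combinatorial interleaving argument for eq.~\eqref{eq:WW2}, and for eq.~\eqref{eq:JW2} the same comparison of the kernel forms of eq.~\eqref{inters} (after $y=1+a$, $z=1+b$) and eq.~\eqref{walks} (after $t\mapsto x/(1-2x)$ and multiplication by $(1+a)(1+b)x$), with the uniqueness of the power-series solution — left implicit in the paper — closing the argument.
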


\begin{proof}
First, walks counted by $W_2$ can be described from walks counted by $W$ as follows: 
a $W_2$-walk is a (possibly empty) sequence of trivial steps, followed by a $W$-walk where, after each step, we insert a (possibly empty) sequence of trivial steps. 
This simple combinatorial argument shows that $W_2(x;a,b) =W(\frac{x}{1-2x};a,b) \frac{1}{1-2x}$. 

Next, consider the kernel form of eq.~\eqref{inters} after substituting $y=1+a$ and $z=1+b$, which is 
\begin{equation}\label{comparing1}
(1-xQ(a,b)) J(x;a,b)=x(1+a)(1+b)-x\,\frac{1+a}{a}\,J(x;0,b)-x\,\frac{(1+a)(1+b)}{b}\,J(x;a,0).
\end{equation}
Compare it to the kernel form of eq.~\eqref{walks}:
\begin{equation}\label{comparing2}
(1-t(Q(a,b)-2))W(t;a,b)=1-\frac{t}{a}\,W(t;0,b)-t\,\frac{(1+a)}{b}\,W(t;a,0).
\end{equation}

Substituting $t$ with $\frac{x}{1-2x}$ in eq.~\eqref{comparing2}, and multiplying this equation by $(1+a)(1+b)x$, 
we see that $(1+a)(1+b)\,x\,W_2(x;a,b)$ satisfies eq.~\eqref{comparing1}, proving our claim. 
\end{proof}

With results of~\cite{bostan}, this easily gives the following theorem.

\begin{thm}\label{naturegf}
The generating function $I(1,1)$ of strong-Baxter numbers is not D-finite.
The same holds for the refined generating function $I(a+1,b+1)$. 
\end{thm}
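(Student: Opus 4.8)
The plan is to transfer the known non-D-finiteness of a quarter-plane walk model to $I(1,1)$ via the explicit relations established in Lemma~\ref{lem:strong}. The starting point is eq.~\eqref{eq:JW2}, which says $J(x;a,b) = (1+a)(1+b)\,x\,W_2(x;a,b)$, together with eq.~\eqref{eq:WW2} relating $W_2$ to the genuine walk generating function $W$. Since $I(x;1+a,1+b) = J(x;a,b)$, setting $a=b=0$ gives $I(x;1,1) = x\,W_2(x;0,0)$, and $W_2(x;0,0) = \frac{1}{1-2x}\,W\!\left(\frac{x}{1-2x};0,0\right)$ by eq.~\eqref{eq:WW2}. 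So $I(1,1)$ is, up to multiplication by the rational function $\frac{x}{1-2x}$ and the algebraic substitution $x \mapsto \frac{x}{1-2x}$, the same as $W(t;0,0)$, the generating function for quarter-plane walks with step set $\{(-1,0),(0,-1),(1,-1),(1,0),(0,1)\}$ ending at the origin.

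The key external input is the result of~\cite{bostan} concerning exactly this step set. I would invoke it to assert that the full generating function $W(t;a,b)$ for these walks — and in particular its specialization $W(t;0,0)$ counting excursions (walks returning to the origin) — is not D-finite. The step set $\{(-1,0),(0,-1),(1,-1),(1,0),(0,1)\}$ is one of the models with infinite group whose non-D-finiteness (for the full series, and for excursions) was established in that reference; I would cite the precise statement there. The main obstacle is really just making sure the citation matches: one must check that~\cite{bostan} proves non-D-finiteness of the specialization we need (the origin-to-origin series, or the full trivariate series), not merely of some other specialization, and that the model is indeed the one with the five steps above rather than a close cousin.

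With that in hand, the argument is closed by two elementary preservation facts about D-finiteness. First, D-finiteness of a univariate series is preserved and reflected under the algebraic change of variable $x \mapsto \frac{x}{1-2x}$ (an invertible Möbius transformation of $x$) and under multiplication by a nonzero rational function: composition of a D-finite series with an algebraic function is D-finite, the inverse substitution $t \mapsto \frac{t}{1+2t}$ is also algebraic, and the rational prefactor $\frac{x}{1-2x}$ can be cleared likewise. Hence $I(1,1)$ is D-finite if and only if $W(t;0,0)$ is D-finite; since the latter is not, neither is $I(1,1)$. Second, for the refined statement about $I(a+1,b+1)$: if $I(x;1+a,1+b)$ were D-finite as a trivariate series, then by~\cite[Theorem B.3]{Flaj} its specialization at $a=b=0$, namely $I(x;1,1)$, would be D-finite, contradicting what we just proved; so $I(a+1,b+1)$ is not D-finite either. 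I would also remark, for completeness, that via eq.~\eqref{eq:JW2} and eq.~\eqref{eq:WW2} the trivariate series $I(x;1+a,1+b)$ is, up to a rational factor and the substitution in $x$, precisely $W(t;a,b)$, so its non-D-finiteness follows directly from the non-D-finiteness of the full walk series in~\cite{bostan} as well.
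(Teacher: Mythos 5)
Your proposal is correct and follows essentially the same route as the paper: both reduce the refined statement to $I(1,1)$ via preservation of D-finiteness under specialization, identify $I(x;1,1)=x\,W_2(x;0,0)=\frac{x}{1-2x}\,W\bigl(\frac{x}{1-2x};0,0\bigr)$ using Lemma~\ref{lem:strong}, and transfer the non-D-finiteness of the excursion series $W(t;0,0)$ from~\cite{bostan} through the rational prefactor and the invertible rational substitution. Your explicit mention of the inverse substitution $t\mapsto\frac{t}{1+2t}$ makes the ``reflected'' direction slightly more careful than the paper's terse phrasing, but the argument is the same.
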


\begin{proof}
Because D-finiteness is preserved by specialization, it is enough to prove that $I(1,1)$ is not D-finite. So, 
with the notation of Lemma~\ref{lem:strong}, our goal is to prove that $J(x;0,0)$ is not D-finite. 
Recall from eq.~\eqref{eq:JW2} that $J(x;a,b) = (1+a)(1+b)\,x\,W_2(x;a,b)$, 
so $J(x;0,0)$ and $W_2(x;0,0)$ coincide up to a factor $x$.
Therefore, proving that $W_2(x;0,0)$ is non D-finite is enough. 

It is proved in~\cite{bostan} that $W(t;0,0)$ is not D-finite. 
Consequently, since $\frac{1}{1-2x}$ and $\frac{x}{1-2x}$ are rational series, 
it follows from eq.~\eqref{eq:WW2} $W_2(x;0,0)$ is not D-finite, as desired.
%
%
\end{proof}

Moreover, some information on the asymptotic behavior of the number of strong-Baxter permutations 
can be derived starting from the connection to walks confined in the quarter plane. In~\cite{bostan} the following proposition is presented.

\begin{proposition}[Denisov and Wachtel, \cite{bostan}(Theorem~4)]\label{DW}
Let $\mathfrak{S}\subseteq\{0,\pm1\}^2$ be a step set which is not confined to a half-plane. 
Let $e_n$ denote the number of $\mathfrak{S}$-excursions of length $n$ confined to the quarter plane $\mathbb{N}^2$ and using only steps in $\mathfrak{S}$.
Then, there exist constants $K$, $\rho$, and $\alpha$ which depend only on $\mathfrak{S}$, such that:
\begin{itemize}
\item if the walk is aperiodic, $e_n\sim K\,\rho^n\,n^\alpha$, 
\item if the walk is periodic (then of period 2), $e_{2n}\sim K\,\rho^{2n}\,(2n)^\alpha,\;\; e_{2n+1}=0$. 
\end{itemize}
\end{proposition}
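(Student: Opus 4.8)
\emph{Reduction by an exponential tilt.} This proposition is the lattice-walk specialization of the Denisov--Wachtel local limit theorem for random walks killed outside a cone, and the plan is to transport the enumeration problem into that probabilistic setting. Let $S(x,y)=\sum_{(i,j)\in\mathfrak{S}}x^iy^j$ be the inventory Laurent polynomial of the step set. Since $\mathfrak{S}$ lies in no half-plane through the origin, the map $(u,v)\mapsto S(e^u,e^v)$ is strictly convex and coercive on $\mathbb{R}^2$, so it has a unique minimizer $(x_0,y_0)\in(0,\infty)^2$; put $\rho=S(x_0,y_0)$. The stationarity equations say precisely that the probability law $p(i,j)=x_0^iy_0^j/\rho$ on $\mathfrak{S}$ is centered. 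An $\mathfrak{S}$-excursion of length $n$ is a sequence of steps whose partial sums $S_1,\dots,S_n$ stay in $\mathbb{N}^2$ and whose total $S_n$ equals $(0,0)$, so the tilting factors telescope and
\[
e_n=\rho^{\,n}\,\mathbb{P}\bigl(S_1,\dots,S_n\in\mathbb{N}^2,\ S_n=(0,0)\bigr),
\]
where $(S_k)_{k\ge0}$ is the random walk with i.i.d. increments of law $p$. Thus everything reduces to the asymptotics of the confined return probability $p_n:=\mathbb{P}(S_1,\dots,S_n\in\mathbb{N}^2,\,S_n=0)$, and the aperiodic/periodic dichotomy is inherited from the support lattice of $p$: period $2$ arises exactly when all steps lie in one coset of an index-two sublattice of $\mathbb{Z}^2$, in which case $S_n=0$ forces $n$ even, so $e_{2n+1}=0$.

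\emph{The cone and the exponent.} The centered law $p$ has a positive-definite covariance $\Sigma$ (again because $\mathfrak{S}$ is genuinely two-dimensional). Composing $\Sigma^{-1/2}$ with a rotation yields a linear bijection $T$ turning $\Sigma$ into the identity, and $T$ maps $\mathbb{N}^2$ onto a convex wedge $C$ of opening angle $\theta\in(0,\pi)$. For standard planar Brownian motion killed on $\partial C$, the positive harmonic function is $h(r,\varphi)=r^{p}\sin(p\varphi)$ with $p=\pi/\theta$, and the Dirichlet heat kernel satisfies $p_t^{C}(u,v)\sim c\,t^{-(1+p)}\,h(u)h(v)$ as $t\to\infty$. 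This is the source of the polynomial correction and pins down $\alpha=-(1+p)=-1-\pi/\theta$, equivalently read off from the principal Dirichlet eigenvalue of the Laplace--Beltrami operator on the spherical arc $T(\mathbb{N}^2)\cap\mathbb{S}^1$; these data depend only on $\mathfrak{S}$, through $S$, its minimizer, and $\Sigma$.

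\emph{From Brownian motion to the walk.} This transfer is the analytic heart and the step I expect to be the main obstacle. Following Denisov--Wachtel, one first builds a positive discrete harmonic function $V$ for $(S_k)$ killed outside $\mathbb{N}^2$, with $V(x)\sim h(Tx)$ as $x\to\infty$ inside the cone, via a martingale/fixed-point construction together with uniform a priori bounds on cone-exit times. One then couples $(S_k)$ with a Brownian motion by the Komlós--Major--Tusnády strong approximation, with error small enough that the confinement event and the endpoint localization $S_n=0$ can be transferred from the Brownian picture with negligible loss. Splitting the trajectory at time $n/2$ and time-reversing the second half (the reversed increment law, once tilted, carries the same confined weights) turns the return probability into a symmetric product; combined with the heat-kernel estimate and a local limit theorem for $S_n=0$, it yields $p_n\sim\kappa\,n^{\alpha}$ for a positive constant $\kappa$. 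The delicate point is that, for excursions, the walk starts and ends on the boundary of the cone rather than strictly inside it, so the behavior of $V$ near the apex must be handled by a separate boundary analysis; this is exactly what fixes the leading constant $K$.

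\emph{Conclusion.} Multiplying back by $\rho^n$ gives $e_n\sim K\rho^n n^{\alpha}$ in the aperiodic case. In the periodic case, the same argument run along even times, together with $e_{2n+1}=0$ noted above, gives $e_{2n}\sim K\rho^{2n}(2n)^{\alpha}$, with $\rho$, $\alpha$, and hence $K$ depending only on $\mathfrak{S}$, as claimed.
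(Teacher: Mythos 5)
This proposition is not proved in the paper at all: it is imported verbatim from Denisov and Wachtel as presented in \cite{bostan}, and the paper only uses its statement (to get the growth constant in Corollary~\ref{cor:strong}). Your proposal is therefore not comparable to any argument in the text; what you have written is an outline of the original probabilistic proof. As an outline it is accurate: the exponential tilt at the minimizer of the inventory polynomial correctly identifies $\rho$ and reduces $e_n$ to a confined return probability for a centered walk (the telescoping of the weights over a closed loop is exactly right), the linear change of coordinates normalizing the covariance correctly produces the wedge whose opening $\theta$ yields $\alpha=-1-\pi/\theta$, and the period-$2$ dichotomy is handled correctly. However, as a proof it is incomplete precisely where you say it is: the existence and asymptotics of the discrete harmonic function $V$, the KMT coupling transfer of the confinement event, and the local limit theorem with both endpoints at the apex of the cone are named but not carried out, and each of these is a substantial piece of analysis (the boundary/apex issue in particular is what makes the excursion case harder than the survival-probability case). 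So treat your text as a correct roadmap of the Denisov--Wachtel argument rather than a self-contained proof; for the purposes of this paper, citing \cite{bostan} as the authors do is the appropriate resolution.
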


From~\cite[Section 2.5]{bostan}, the growth constant $\rho_W$ associated with $W(t;0,0)$ is an algebraic number 
whose minimal polynomial is $\mu_{\rho}=t^3+t^2-18t-43$. The approximate value for $\rho_W$ is $4.729031538$. 
We show below that the growth constant of strong-Baxter numbers is closely related to $\rho_W$.

\begin{corollary}\label{cor:strong}
\label{cor:growth_strong}
The growth constant for the strong-Baxter numbers is $\rho_W+2 \approx 6.729031538$.
\end{corollary}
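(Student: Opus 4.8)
The plan is to track how the substitution relating strong-Baxter numbers to the walk-enumeration generating function of \cite{bostan} affects the exponential growth rate. By Lemma~\ref{lem:strong}, we have $J(x;0,0) = x\,W_2(x;0,0)$ and $W_2(x;a,b) = W\!\left(\frac{x}{1-2x};a,b\right)\frac{1}{1-2x}$, so up to the harmless rational prefactor $\frac{x}{1-2x}$, the singularity structure of $J(x;0,0)$ is governed by that of $W\!\left(\frac{x}{1-2x};0,0\right)$. First I would recall that $W(t;0,0)$ is the excursion generating function for the quarter-plane step set $\{(-1,0),(0,-1),(1,-1),(1,0),(0,1)\}$, and that by Proposition~\ref{DW} (Denisov--Wachtel) its coefficients satisfy $e_n \sim K\rho_W^n n^\alpha$ with $\rho_W$ the algebraic number whose minimal polynomial is $\mu_\rho = t^3 + t^2 - 18t - 43$; note this step set is aperiodic (it contains $(1,0)$ and $(0,1)$, so there is no parity obstruction), hence no periodicity subtlety arises. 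Consequently $W(t;0,0)$ has radius of convergence $1/\rho_W$ in the variable $t$.

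Next I would perform the change of variable. The relevant quantity is the radius of convergence $r$ of $x \mapsto W\!\left(\frac{x}{1-2x};0,0\right)$: the series in $x$ converges precisely when $\left|\frac{x}{1-2x}\right| < 1/\rho_W$, i.e. when $\frac{x}{1-2x} = 1/\rho_W$ at the dominant real positive singularity. Solving $\frac{r}{1-2r} = \frac{1}{\rho_W}$ gives $r\rho_W = 1 - 2r$, hence $r = \frac{1}{\rho_W + 2}$. The extra factor $\frac{1}{1-2x}$ in $W_2$ has radius of convergence $1/2 > \frac{1}{\rho_W+2}$ (since $\rho_W \approx 4.73 > 0$), so it does not move the dominant singularity; and the factor $x$ in $J(x;0,0) = x\,W_2(x;0,0)$ is entire. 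Therefore the radius of convergence of $J(x;0,0) = I(x;1,1)$ equals $\frac{1}{\rho_W+2}$, which is exactly the statement that the growth constant for strong-Baxter numbers is $\rho_W + 2 \approx 6.729031538$.

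The only point requiring a little care — and what I would regard as the main (though still modest) obstacle — is justifying that the dominant singularity of the composed series $W\!\left(\frac{x}{1-2x};0,0\right)$ in $x$ is genuinely located at $x = \frac{1}{\rho_W+2}$ and is not cancelled or superseded. For this I would invoke that $W(t;0,0)$ is a power series with non-negative coefficients (it counts combinatorial objects), so by Pringsheim's theorem $t = 1/\rho_W$ is a singularity on the boundary of its disk of convergence; since $x \mapsto \frac{x}{1-2x}$ is an increasing bijection from $[0,\frac{1}{\rho_W+2})$ onto $[0,\frac{1}{\rho_W})$ that is analytic there, the composition is analytic on $[0,\frac{1}{\rho_W+2})$ and must be singular at $\frac{1}{\rho_W+2}$ (again invoking non-negativity of the coefficients of the composed series, which follows since $\frac{x}{1-2x}$ has non-negative coefficients and $W(\cdot;0,0)$ does too, and composition preserves this). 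Since $\frac{1}{\rho_W+2} < \frac{1}{2}$, the rational factors introduce no competing singularity, and the claimed growth constant follows. A remark could be added that one does not even need the non-D-finiteness of Theorem~\ref{naturegf} for this corollary — only the asymptotic growth rate $\rho_W$ extracted from \cite{bostan} — and that the sub-exponential correction $n^\alpha$ transfers as well, though the exponent $\alpha$ would need the precise Denisov--Wachtel formula to pin down and is not claimed here.
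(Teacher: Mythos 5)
Your proposal is correct and follows essentially the same route as the paper: both start from $I(x;1,1)=x\,W\!\left(\frac{x}{1-2x};0,0\right)\frac{1}{1-2x}$, use that $W(t;0,0)$ has radius of convergence $1/\rho_W$, solve $\frac{r}{1-2r}=\frac{1}{\rho_W}$ to locate the dominant singularity at $r=\frac{1}{\rho_W+2}<\frac12$, and note the rational factors do not interfere. The only cosmetic difference is that the paper justifies the transfer of the singularity by citing supercriticality of the composition (Flajolet--Sedgewick), whereas you argue it directly via Pringsheim and positivity of coefficients; these are the same underlying argument.
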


\begin{proof}
From Lemma~\ref{lem:strong}, $I(x;1,1)=x\,W_2(x;0,0)=x\,W(\frac{x}{1-2x};0,0)\,\frac{1}{1-2x}$. 
And from the discussion above, $\frac{1}{\rho_W}$ is the radius of convergence of $W(t;0,0)$. 
The radius of convergence of $g(x)=\frac{x}{1-2x}$ is $\frac{1}{2}$, and $\lim_{{x\to 1/2} \atop {x<1/2}}g(x) = +\infty > \frac{1}{\rho_W}$. 
So, the composition $W(g(x);0,0)$ is supercritical (see \cite[p. 411]{Flaj}), and 
the radius of convergence of $W(\frac{x}{1-2x};0,0)$ is $g^{-1}\left(\frac{1}{\rho_W}\right)=\frac{1}{\rho_W+2}$.
Since $\frac{1}{\rho_W+2}$ is smaller than the radius of convergence $\frac{1}{2}$ of $\frac{1}{1-2x}$, $\frac{1}{\rho_W+2}$ is also the radius of convergence of 
$x\,W(\frac{x}{1-2x};0,0)\,\frac{1}{1-2x} = I(x;1,1)$, proving our claim. 
\end{proof}

\subsection*{Acknowledgements}

The comments of several colleagues on an earlier draft of our paper have helped us improve it significantly. 

First, we would like to thank David Bevan, for sharing his conjectural formulas for $SB_n$ in~\cite{BevanPrivate}, 
for bringing to our attention the conjecture about the enumeration of permutations avoiding $\underbracket[.5pt][1pt]{14}23$, 
and for suggesting the method used in an earlier version of this paper to derive the asymptotic behavior of $SB_n$. 

We also thank Christian Krattenthaler for independently suggesting this method, and pointing to the reference~\cite{MBM_Xin}.

We are very grateful to the referee for their numerous and helpful suggestions. In particular, the current proof of the asymptotic behavior of $SB_n$ 
(together with the reference~\cite{McIntosh}) were suggested to us by the referee.

Finally, we thank Andrew Baxter for clarifying the status of the conjecture about $\underbracket[.5pt][1pt]{14}23$-avoiding permutations, 
which brought reference~\cite{Kasraoui} to our attention.

\end{document}